\title{Conservative Discontinuous Cut Finite Element Methods }
\date{\today}
\author{
Mats G. Larson and  Sara Zahedi
}
\date{}
\newcommand{\tkapi}{\widetilde{\kappa}_i}
\begin{document}

\maketitle

\begin{abstract}
We develop a conservative cut finite element method for an elliptic coupled bulk-interface problem. The method is based on a discontinuous Galerkin framework where stabilization is 
added in such a way that we retain conservation on macro elements containing one element 
with a large intersection with the domain and possibly a number of elements with small intersections. We derive error estimates and present confirming numerical results.
\end{abstract}

\section{Introduction}
Simulations in many applications involve approximating solutions to Partial Differential Equations (PDEs) in complex geometries, such geometries may for example be defined by cell membranes, interfaces separating immiscible fluids, or heart valves guarding the exits of the heart cavities.  There has been an extensive development and progress of computational methods for efficiently approximating solutions to PDEs in complex geometries, see e.g.\cite{MiIa05, CaHuQuZa07, LiLoRaVo09, TrGr15, ElRa20}.  In the present work, we develop further one such computational technique, namely the \textbf{Cut Finite Element Method (CutFEM)}. 
In cut finite element methods the domain of interest is embedded into a polygonal domain equipped with a quasi-uniform mesh referred to as the fixed background mesh. An active mesh is defined which consists of all elements in the mesh that intersect the domain of interest. Associated to the active mesh is a finite dimensional function space and a weak formulation with  bilinear forms defined from the variational formulation of the PDE. Often consistent stabilization terms are added in the weak form to ensure stability and avoid ill-conditioned linear systems of equations. See \cite{BurClaHanLarMas15} for an introduction to CutFEM. 

In this paper we develop a cut finite element method based on a discontinuous Galerkin (DG) framework with local conservation in mind. We consider a model consisting of two subdomains separated by an interface with convection diffusion equations in the two bulk domains linearly coupled to a convection diffusion equation on the interface. The diffusion operators are in divergence form with variable tensor valued coefficients.
For a similar elliptic model problem with constant coefficients an unfitted finite element method based on the continuous Galerkin (CG) framework has been developed and analyzed in~\cite{GrOlRe15}. In~\cite{BurHanLarZah16} a CutFEM based on CG is proposed and analyzed for a coupled bulk-surface diffusion problem considering one bulk domain and later a CutFEM based on a discontinuous Galerkin (DG) framework was proposed in~\cite{Ma18}. For an early analysis of a finite element method based on fitted meshes for the coupled bulk-interface diffusion problem see~\cite{ElRa12}. In all the work described above stationary interfaces were considered. A space time CutFEM for simulations of coupled bulk-surface convection diffusion equations with moving interfaces, modelling for example the evolution of soluble surfactant concentrations, was developed in~\cite{HanLarZah16}. Cut finite element methods based on DG have also been proposed and analyzed for other problems for surface PDEs in e.g.~\cite{BuHaLaMa16} and bulk PDEs in~\cite{JoLa13, GuMA19}.

Compared to DG, the CG framework leads to finite element methods with fewer unknowns. An advantage of DG is its local conservation property~\cite{Do82,BaOD99,Cosh98}. In this work we develop a CutFEM based on DG that inherits this property which is important in many applications.  However, a straightforward application of the ghost penalty stabilization~\cite{Bu10} often used in CutFEM to ensure stability and avoid poor conditioning of the linear systems will destroy the local conservation. We propose and analyze a CutFEM based on DG were local conservation properties hold on so called macro elements.  We give a criteria for dividing elements in an active mesh into small and large elements. A small element is then connected via a chain of face neighbours to a large element and only on those faces stabilization~\cite{Bu10} is applied.  In this way, we create macro elements, consisting of a large element and possibly a few small elements. These macro elements behave as standard finite elements and local conservation properties from the DG framework are inherited to these macro elements. In this way we apply stabilization very restrictively but can prove that the macro element stabilization provides the same control as the full stabilization. In~\cite{LaZa19} we proposed a stabilization for continuous high order CutFEM for PDEs on interfaces where similar macro elements were created, but only as a tool in the proof. We note that the 
agglomeration techniques for discontinuous piecewise polynomial spaces developed in \cite{JoLa13} may be viewed as a strong version of the macro element stabilization we develop here. We also mention approaches for stabilization of continuous finite element spaces based on 
various extension techniques, see \cite{BaVe18,BuHa21, WaZaKrBe}. Note that all these references are restricted to bulk problems.

The paper is organized as follows. In Section 2 we introduce the model problem and its weak formulation. In Section 3 we formulate the proposed discontinuous cut finite element method and define the macro element stabilization and prove results on the properties provided by the stabilization. In Section 4 we analyze the method and prove optimal order a priori error estimates in the energy  and $L^2$ norms, and in Section 5 we show results from numerical experiments that support our theoretical findings. 
\section{The Model Problem}

\subsection{Basic Notation}
Let $\Omega \subset \IR^d$ be a $d$-dimensional domain, $d=2$ or $d=3$, with convex polygonal boundary $\partial \Omega$. Let $\Omega$ be partitioned into two subdomains $\Omega_1$ and $\Omega_2$ by a smooth closed $d-1$ dimensional internal interface $\Omega_0$.
Let $U_{\delta}(\Omega_0) = \cup_{x \in \Omega_0} B_\delta(x)$, where $B_\delta(x)$ is the open ball with radius $\delta>0$ 
centred at $x$, be the tubular neighborhood of $\Omega_0$ with thickness $\delta>0$. We assume that there is $\delta_0>0$ 
such that the closest point mapping $p:U_{\delta_0}(\Omega_0) \rightarrow \Omega_0$ is well defined and that 
$U_{\delta_0}(\Omega_0) \subset \Omega$, and thus the interface does not come arbitrarily close to the external 
boundary $\partial \Omega$.   Assume that $\Omega_2$ is the domain inside $\Omega_0$ and thus $\partial \Omega \subset \partial \Omega_1$. Let $n_i$ denote the exterior unit normal to  $\partial \Omega_i$, $i=1,2$.

\subsection{The Problem} 

Let $\nabla_i v= \nabla v$ be the $\IR^d$ gradient for the flat domains $\Omega_i$, $i=1,2,$ and let 
$\nabla_0 v = (I - n_0 \otimes n_0)\nabla v$ be the tangential gradient on the curved domain $\Omega_0$, where $n_0$ is a unit 
normal to $\Omega_0$, i.e., $n_0 = n_2$ or $n_0 = n_1|_{\Omega_0}$.  For $i=0,1,2,$ let $\beta_i$ be a smooth tangential vector field on $\Omega_i$ 
and assume that $\text{div}_i \beta_i=0$ in $\Omega_i$, where $\text{div}_i \beta_i$ is the divergence for $i=1,2,$ and the tangential 
divergence $\text{tr}(\beta_0 \otimes \nabla_0)$, for $i=0$. We also assume that  $n_i \cdot \beta = 0$ on $\partial \Omega_i$ for $i=1,2$.  For 
$i=0,1,2,$ let $A_i:\Omega_i \rightarrow \IR^{d\times d}$ be smooth uniformly positive definite matrix fields in the sense that there is 
a constant $\alpha_{0,i} >0$ and a function $\alpha_{0,i} \leq \alpha_i(x)$ such that for $x \in \Omega_i$,
\begin{equation}\label{eq:coer-Ai}
\alpha_{0,i}  \|\xi\|^2_{\IR^d}  \leq \alpha_i(x)\|\xi\|^2_{\IR^d} \leq (A_i \xi, \xi)_{\IR^d},  \qquad \xi \in T_x(\Omega_i) 
\end{equation} 
where $T_x(\Omega_i)$ is the tangent space at $x$ to $\Omega_i$ (which for $i=1,2,$ is $\IR^d$). Define the following operators 
\begin{equation}
\llbracket n \cdot A \nabla v \rrbracket = \sum_{i=1}^2 n_i \cdot A_i \nabla v_i,
\qquad 
[ \kappa v ]_i = \kappa_i v_i - \kappa_{0,i} v_0,  \quad i = 1,2
\end{equation}
where $\kappa_i\in \IR_+$, $\kappa_{0,i} \in \IR_+$, are constant parameters. We shall consider the following stationary 
convection-diffusion problem
\begin{alignat}{2}\label{eq:BVP}
  - \nabla \cdot (A_i \nabla u_i) +\nabla \cdot (\beta u_i)
  &= f_i &\qquad &\text{in $\Omega_i$, $i=1,2$}  
\\ \label{eq:BVP-b}
-\nabla_0 \cdot (A_0 \nabla_0 u_0) + \nabla_0 \cdot (\beta u_0)
+\llbracket n \cdot A \nabla u \rrbracket &=f_0 & \qquad &\text{on $\Omega_0$} 
\\ \label{eq:BVP-c}
-n_i \cdot \nabla A_i u_i &= [\kappa u ]_i  &\qquad &\text{on $\Omega_0$} 
\\ \label{eq:BVP-d}
u_1 & = 0  &\qquad &\text{on $\partial \Omega$} 
\end{alignat}
For simplicity, we consider the case of homogeneous Dirichlet data on the exterior boundary $\partial \Omega$ since we focus 
on the coupling at the interface and the extension to more general boundary conditions follows by standard techniques.

The model problem (\ref{eq:BVP})-(\ref{eq:BVP-d}) is obtained by restricting the general time dependent model describing the evolution of soluble surfactants with an interface, separating two immiscible fluids, moving with normal velocity $n \cdot \beta$, to an equilibrium state, see \cite{GrOlRe15} for a derivation. Note that in \cite{GrOlRe15}, the coefficients are scalars but here we allow matrix coefficients. Finally, we remark that if the boundary condition on the outer boundary $\partial \Omega$ is  replaced by a Neumann condition the right hand side must satisfy the equilibrium condition $\sum_{i=0}^2 \int_{\Omega_i} f_i = 0$.

\subsection{Weak Formulation}

To derive the weak form we let 
\begin{equation}
W=\left\{ v \in \oplus_{i=0}^2 H^1(\Omega_i) \text{ $|$ $v_1= 0$ on $\partial \Omega$} \right\} 
\end{equation} 
and note that $v\in W$ takes the form $v=(v_0,v_1,v_2)$ with components $v_i \in H^1(\Omega_i)$. Let for brevity
$\widetilde{\kappa}_i =\kappa_i \kappa_{0,i}^{-1}$,  multiplying (\ref{eq:BVP}) by the scaled test functions 
$\widetilde{\kappa}_i v_i$, and then using Green's formula, followed by the interface condition (\ref{eq:BVP-c}), and 
the equation on the interface (\ref{eq:BVP-b}), and finally using Green's formula on the interface we get
\begin{align}
\sum_{i=1}^2 (f_i,\tkapi v_i)_{\Omega_i} 
&=
\sum_{i=1}^2 -(\nabla \cdot A_i \nabla u_i, \tkapi v_i)_{\Omega_i}
+ (\nabla \cdot (\beta u_i), \tkapi v_i)_{\Omega_i}
\\
&
=\sum_{i=1}^2  \tkapi \underbrace{(  A_i \nabla u_i,\nabla v_i)_{\Omega_i}}_{a_i(u_i,v_i)} - (n_i \cdot A_i \nabla u_i,  \tkapi v_i)_{\partial \Omega_i\setminus \partial \Omega} 
\\
&\qquad 
+ \sum_{i=1}^2
\tkapi  \underbrace{\Big( \frac{1}{2} (\beta \cdot \nabla u_i,v_i)_{\Omega_i}  -\frac{1}{2}( u_i,\beta \cdot \nabla v_i)_{\Omega_i}\Big)}_{b_i(u_i,v_i)}
\\
&=\bigstar
\end{align}
\begin{align}
\bigstar &=
\sum_{i=1}^2 \tkapi a_i(u_i,v_i) + \tkapi  b_i(u_i,v_i)  - (\underbrace{n_i \cdot A_i \nabla u_i}_{=-[\kappa u]_i},\underbrace{\tkapi v_i - v_0}_{=\kappa_{0,i}^{-1}[\kappa  v]_i} )_{\Omega_0}
-
(n_i \cdot A_i \nabla u_i, v_0)_{\Omega_0}
\\
 &=
\sum_{i=1}^2  \tkapi a_i(u_i,v_i) +  \tkapi  b_i(u_i,v_i) 
+ (\kappa_{0,i}^{-1} [\kappa u]_i,[\kappa v]_i)_{\Omega_0}
- (\llbracket n \cdot A \nabla u \rrbracket,v_0)_{\Omega_0}
\\
&=
\sum_{i=1}^2  \tkapi  a_i(u_i,v_i) +  \tkapi  b_i(u_i,v_i) 
+ (\kappa_{0,i}^{-1}  [\kappa u]_i,[\kappa v]_i)_{\Omega_0}
- (f_0,v_0)_{\Omega_0}
\\
&\qquad 
- (\nabla_0 \cdot ( A_0 \nabla_0 u_0),v_0)_{\Omega_0}
+ (\nabla_0 \cdot (\beta u_0),v_0)_{\Omega_0}
\\
&=
\sum_{i=1}^2  \tkapi  a_i(u_i,v_i) +  \tkapi b_i(u_i,v_i) 
+ (\kappa_{0,i}^{-1} [\kappa u]_i,[\kappa v]_i)_{\Omega_0}
- (f_0,v_0)_{\Omega_0}
\\
&\qquad 
+ \underbrace{(A_0 \nabla_0 u_0, \nabla_0 v_0)_{\Omega_0}}_{a_0(u_0,v_0)}
+\underbrace{\frac{1}{2} (\beta \cdot \nabla_0 u_0,v_0)_{\Omega_0} -\frac{1}{2}(u_0,\beta \cdot \nabla_0 v_0)_{\Omega_0}}_{b_0(u_0,v_0)}
\end{align}
Thus, we arrive at the following weak formulation:

\paragraph{Weak Problem.} Find $u \in W$ such that 
\begin{equation}\label{eq:weakformu}
A(u,v) = L(v)\qquad \forall v \in W
\end{equation}
with forms defined, for $v,w \in W$, by 
\begin{align}
A(v,w) &= \sum_{i=0}^2  \tkapi \Big( a_i(v_i,w_i)+ b_i(v_i,w_i) \Big)
+  \sum_{i=1}^2 (\kappa_{0,i}^{-1} [\kappa v]_i,  [\kappa w]_i)_{\Omega_0}
\\
L(v) &= \sum_{i=0}^2 (f_i,\tkapi v_i)_{\Omega_i}
\end{align}
where $\widetilde{\kappa}_{0} = 1$, $\tkapi = \kappa_{0,i}^{-1} \kappa_i$ for $i=1,2$, and
\begin{align}
  a_i(v_i,w_i)&= ( A_i \nabla_i v_i,\nabla_i w_i)_{\Omega_i} \\
b_i(v_i,w_i) &= \frac{1}{2} \Big( (\beta \cdot \nabla_i v_i,w_i)_{\Omega_i} -( v_i,\beta \cdot \nabla_i w_i)_{\Omega_i} \Big)
\end{align}

\paragraph{Existence and Uniqueness.} Introducing the energy norm 
\begin{align}
\| v \|_A^2 =  A(v,v), \qquad v \in W
\end{align}
associated with the form $A$, we have the Poincar\'e inequality.
\begin{lem}  There is a constant such that for all $v \in W$,
\begin{equation}\label{eq:poincare-cont}
\sum_{i=0}^2 \tkapi \| v \|^2_{\Omega_i} \lesssim \| v \|^2_A
\end{equation}
\end{lem}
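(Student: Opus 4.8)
The plan is to combine the skew-symmetry of the convective forms, the uniform ellipticity \eqref{eq:coer-Ai}, and a chain of Poincaré and trace inequalities that transports an $L^2$ bound from $\Omega_1$, where the homogeneous Dirichlet condition on $\partial\Omega$ is imposed, across the interface $\Omega_0$ and into $\Omega_2$, using the coupling terms $(\kappa_{0,i}^{-1}[\kappa v]_i,[\kappa v]_i)_{\Omega_0}$ as the bridge.

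First I would note that $b_i(v_i,v_i)=0$ for $i=0,1,2$, since the two $L^2$ products defining $b_i$ coincide when both arguments equal $v_i$; no integration by parts or divergence-free assumption is needed here. Hence $\|v\|_A^2=\sum_{i=0}^2\tkapi\,a_i(v_i,v_i)+\sum_{i=1}^2\kappa_{0,i}^{-1}\|[\kappa v]_i\|_{\Omega_0}^2$, and since the weights $\tkapi$ and $\kappa_{0,i}$ are fixed positive constants, \eqref{eq:coer-Ai} gives $\|\nabla_i v_i\|_{\Omega_i}^2\lesssim\|v\|_A^2$ for $i=0,1,2$ and $\|\kappa_i v_i-\kappa_{0,i}v_0\|_{\Omega_0}^2\lesssim\|v\|_A^2$ for $i=1,2$; these are the only ingredients I carry forward.

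Next, since $v_1\in H^1(\Omega_1)$ vanishes on $\partial\Omega\subset\partial\Omega_1$, the standard Poincaré inequality gives $\|v_1\|_{\Omega_1}\lesssim\|\nabla v_1\|_{\Omega_1}\lesssim\|v\|_A$, and the trace theorem gives $\|v_1\|_{\Omega_0}\lesssim\|v_1\|_{H^1(\Omega_1)}\lesssim\|v\|_A$ for the trace of $v_1$ on $\Omega_0\subset\partial\Omega_1$. A triangle inequality in $L^2(\Omega_0)$ together with $\kappa_{0,1}>0$ then yields $\|v_0\|_{\Omega_0}\lesssim\|\kappa_1 v_1-\kappa_{0,1}v_0\|_{\Omega_0}+\|v_1\|_{\Omega_0}\lesssim\|v\|_A$, and combined with $\|\nabla_0 v_0\|_{\Omega_0}\lesssim\|v\|_A$ this gives $\|v_0\|_{H^1(\Omega_0)}\lesssim\|v\|_A$. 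Repeating the triangle-inequality step with the $i=2$ coupling term and $\kappa_2>0$ bounds the trace of $v_2$ on the whole of $\partial\Omega_2=\Omega_0$, namely $\|v_2\|_{\Omega_0}\lesssim\|v\|_A$, and finally the Poincaré-type estimate $\|v_2\|_{\Omega_2}\lesssim\|\nabla v_2\|_{\Omega_2}+\|v_2\|_{\partial\Omega_2}$, valid on the Lipschitz domain $\Omega_2$, gives $\|v_2\|_{\Omega_2}\lesssim\|v\|_A$. Summing the three $L^2$ bounds and reinserting the bounded weights $\tkapi$ produces \eqref{eq:poincare-cont}.

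The only delicate point is the order in which the pieces are assembled: neither $v_0$ nor $v_2$ is controlled on its own, so one must start from the subdomain carrying the Dirichlet data and propagate "inward"; what makes the propagation possible is the positivity of the coupling coefficients $\kappa_{0,i}$, which keeps $[\kappa v]_i$ comparable to $\kappa_{0,i}v_0$ once $v_i$ is already under control, together with the existence of a trace of $v_i$ on $\Omega_0$. If one prefers to avoid tracking constants, the same interface bookkeeping can instead be carried out inside a Peetre--Tartar compactness argument applied to a normalized sequence $v^n$ with $\|v^n\|_A\to0$, showing the weak limit must vanish and contradicting $\sum_{i=0}^2\tkapi\|v_i^n\|_{\Omega_i}^2=1$.
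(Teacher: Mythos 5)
Your proof is correct, but it follows a genuinely different route from the paper. You never introduce an auxiliary problem: after observing that $b_i(v_i,v_i)=0$ and that \eqref{eq:coer-Ai} reduces $\|v\|_A^2$ to gradient norms plus the coupling terms, you propagate control directly, starting from the Poincar\'e--Friedrichs inequality on $\Omega_1$ (where the Dirichlet data sits), passing to the trace of $v_1$ on $\Omega_0$, then to $v_0$ via the $i=1$ coupling term, then to the trace of $v_2$ via the $i=2$ coupling term, and finally into $\Omega_2$ by the Poincar\'e inequality with a boundary term; each step is standard on these Lipschitz domains and the positivity of the constants $\kappa_i,\kappa_{0,i}$ makes every triangle inequality reversible in the needed direction. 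The paper instead runs a duality argument: it solves $-\Delta\phi=\psi$ in $\Omega$, tests with $\sum_i\tkapi v_i\chi_i$, and lets the interface terms reassemble into $[\kappa v]_i$, obtaining the two bulk bounds \eqref{eq:contpoin-a} simultaneously (with an explicit constant $\max_i\kappa_{0,i}^{-1}\max(1,\kappa_i)$ from elliptic regularity) before recovering $v_0$ from $v_2$ by the same triangle-inequality-plus-trace step you use, only traversed in the opposite direction. Your argument is the more elementary one for the continuous setting; what the duality argument buys is symmetry between the two bulk domains and, more importantly, a template that survives essentially unchanged in the discrete Poincar\'e inequality of Lemma~\ref{lem:poincare-discrete}, where the test function is discontinuous and the extra face terms it generates are exactly the ones the stabilization controls, whereas your chain would there require the discrete trace machinery of Lemma~\ref{lem:discrete-trace} at each hand-off. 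The closing remark about a Peetre--Tartar compactness alternative is fine but unnecessary, and note that a compactness proof would not yield the explicit dependence on the $\kappa$ parameters that the paper records.
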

\begin{proof}
In order to show that (\ref{eq:poincare-cont}) holds, we let $\phi$ be the solution 
to the problem 
\begin{equation}
-\Delta \phi = \psi\quad \text{in $\Omega$}, \qquad \phi = 0 \quad \text{on $\partial \Omega$}
\end{equation}
where $\psi \in L^2(\Omega)$. Given $v \in W,$ multiplying $-\Delta \phi = \psi$ by 
$v = \sum_{i=1}^2 \tkapi v_i \chi_i$, where $\chi_i$ 
is the characteristic function of $\Omega_i$, and integrating by parts on the bulk domains 
$\Omega_i$, $i=1,2,$ we obtain
\begin{align}
&\sum_{i=1}^2 (\tkapi v_i,\psi)_{\Omega_i} = \sum_{i=1}^2 - (\tkapi v_i, \Delta \phi )_{\Omega_i} 
\\
&=  \sum_{i=1}^2 (\tkapi \nabla v_i,\nabla \phi )_{\Omega_i}  - (\tkapi v_i, \nabla_{n_i} \phi )_{\partial \Omega_i} 
\\
&=  \sum_{i=1}^2 (\tkapi \nabla v_i,\nabla \phi )_{\Omega_i}  - (\tkapi v_i - v_0, \nabla_{n_i} \phi )_{\Omega_0}  - ( v_0, \llbracket \nabla_{n_i} \phi \rrbracket )_{\Omega_0} 
\\
&=  \sum_{i=1}^2 (\tkapi \nabla v_i,\nabla \phi )_{\Omega_i}  - (\kappa_{0,i}^{-1}[\kappa v]_i, \nabla_{n_i} \phi )_{\Omega_0}  
\\
&\leq  \Big(\sum_{i=1}^2 \tkapi\| \nabla v_i\|^2_{\Omega_i} + \kappa_{0,i}^{-1}\|[\kappa v]_i\|^2_{\Omega_0} \Big)^{1/2} 
\Big( \sum_{i=1}^2 \tkapi \|\nabla \phi \|^2_{\Omega_i}  + \kappa_{0,i}^{-1}\|\nabla_{n_i} \phi \|^2_{\Omega_0}  \Big)^{1/2}
\\
&\lesssim \| v \|_A \| \psi \|_\Omega
\end{align}
where we used a trace inequality and elliptic regularity to obtain
\begin{equation}
\sum_{i=1}^2 \tkapi \|\nabla \phi \|^2_{\Omega_i}  + \kappa_{0,i}^{-1}\|\nabla_{n_i} \phi \|^2_{\Omega_0} 
\lesssim 
\underbrace{\max_{i\in \{1,2\}}\max(  \tkapi, \kappa_{0,i}^{-1})}_{=\max_{i\in \{1,2\}} \kappa_{0,i}^{-1} \max(1,\kappa_i)} 
\Big( \sum_{i=1}^2 \| \phi \|^2_{H^2(\Omega_i)} \Big)
 \lesssim\| \psi \|^2_\Omega
\end{equation}
Setting $\psi = \sum_{i=1}^2 v_i \chi_i$ gives
\begin{equation}\label{eq:contpoin-a}                                                                      
\sum_{i=1}^2 \tkapi \|v_i \|^2_{\Omega_i} \lesssim \| v \|^2_A
\end{equation}
with hidden constant dependent on  $\max_{i\in \{1,2\}} \kappa_{0,i}^{-1} \max(1,\kappa_i)$. Finally, using 
a trace inequality on $\Omega_2$ we get 
\begin{align}
\kappa_{2,0} \|v_0\|_{\Omega_0} &\lesssim  \| \kappa_{2,0} v_0 - \kappa_2 v_2 \|_{\Omega_0} + \kappa_2 \|v_2 \|_{\Omega_0} 
\\
&\lesssim  \| \kappa_{2,0} v_0 - \kappa_2 v_2 \|_{\Omega_0} + \kappa_2 \|v_2 \|_{H^1(\Omega_2)} 
\\
&\lesssim \| v \|_A
\end{align}
where we used (\ref{eq:contpoin-a}) to conclude that 
$\|v_2 \|^2_{H^1(\Omega_2)} \lesssim \| v_2 \|_A$.
This completes the proof of the Poincar\'e inequality (\ref{eq:poincare-cont}).
\end{proof}

Thanks to the Poincar\'e inequality, the energy norm $\| \cdot \|_A$ is indeed a norm and by definition $A$ is coercive 
and continuous with respect to $\| \cdot \|_A$ and we may apply the Lax-Milgram 
lemma to conclude that there exists a unique solution $u \in W$ to the weak problem (\ref{eq:weakformu}).

\section{Discontinuous CutFEM}
Here we formulate the discontinuous cut finite element method. We begin by introducing some preliminaries 
including the construction of the mesh and the finite element spaces. Then we formulate the method, define the stabilization forms, and provide some useful technical results for the stabilization forms. We end 
the section with a derivation of the method and the local conservation property.

\subsection{The Mesh and Finite Element  Spaces}
We introduce the following notation.
\begin{itemize} 
\item Let $\mcT_h$ be a quasiuniform partition of $\Omega$ into shape regular simplicies with mesh parameter $h \in (0,h_0]$ and 
let $\mcF_h$ be the set of internal faces in $\mcT_h$.

\item Define the active meshes 
\begin{equation}
\mcT_{h,i} = \{ T \in \mcT_h : T \cap \Omega_i \neq \emptyset\}, \qquad i=0,1,2
\end{equation}
associated with the subdomains $\Omega_i$ and let $\mcF_{h,i}$ be the set of interior faces in $\mcT_{h,i}$. The corresponding intersections with $\Omega_i$, are defined by
 \begin{equation}
\mcK_{h,i} = \{ K = T \cap \Omega_i : T \in \mcT_{h,i} \}, 
\qquad 
\mcE_{h,i} = \{ K = F \cap \Omega_i : F \in \mcF_{h,i} \}
\end{equation}

\item Let 
\begin{equation}
V_h = \{ v \in \oplus_{T \in {\mcTh}} P_1(T) \text{ $|$ $v=0$ on $\partial \Omega$}\}
\end{equation}
be the space of discontinuous piecewise linear polynomials on $\mcTh$, which are zero on the external boundary $\partial \Omega$. 
 Let $W_{h,i}= {V}_{h}|_{\mcT_{h,i}}$ be the active finite element space associated 
with $\mcT_{h,i}$ and let 
\begin{equation}\label{def:Wh}
W_h = \bigoplus_{i=0}^2 W_{h,i}
\end{equation}
be the finite element space associated with the full system.

\item 
For a face  $F$ in $\mcF_{h,i}$, $i=0,1,2$, shared by neighbouring elements $T_1$ and $T_2$ in $\mcT_{h,i}$ we let $\nu_{i,j}$ be the exterior unit normal vector to $T_j$.  For $i=0$,  let $\nu_{0,j}$ be the exterior unit co-normal to $K_j \in \mcK_{h,0}$, i.e. the vector which is tangent to $K_j$ and normal 
to $\partial K_j$. Define the jump and average operators at the face $F$ for scalar functions by
\begin{equation}
[ v ]= v_1 - v_2,
\qquad 
\langle v \rangle= \theta_{F,1} v_1 + \theta_{F,2} v_2
\end{equation}
where $v_l = v|_{T_l}$, $l=1,2$, and for  functions of the form $\nu_i \cdot v$, with $v$ a vector field, by
\begin{equation}
[ \nu_i \cdot v ]= \nu_{i,1} v_1 + \nu_{i,2} v_2, 
\qquad 
\langle \nu_i \cdot v \rangle= \theta_{F,1} \nu_{i,1} \cdot v_1 - \theta_{F,2} \nu_{i,2} \cdot v_2
\end{equation}
where  $ \theta_{F,j} \geq 0$ are weights defining a convex combination $\theta_{F,1} +  \theta_{F,2} =1$. The dual 
average $\langle \cdot \rangle^*$ is obtained by switching the weights in the average. Note that if the weights are equal, 
i.e., $ \theta_{F,1}= \theta_{F,2} = 1/2$ we have $\langle w \rangle^*$= $\langle w \rangle$. 

\item In particular, when $v = \beta \widetilde{v}$ 
for a smooth vector field $\beta$ and scalar $\widetilde{v}$ we write 
\begin{align}
[ \nu_i \cdot (\beta \widetilde{v} )  ] =  \nu_{i,1} \cdot \beta [ \widetilde{v} ] 
\end{align}
where we note that the right hand side is independent of the order of the enumeration of elements $T_1$ and $T_2$, 
 i.e. setting  index 2 to 1 and index 1 to 2, since $\nu_{i,2} = - \nu_{i,1}$ and changing the enumeration corresponds to 
 multiplying $[\widetilde{v}]$ by $-1$.
 
\item We have the following identity 
\begin{equation} \label{eq:jumpaverrel}
[ \nu_i \cdot v \, w ]=  \langle \nu_i \cdot v \rangle [w]+[ \nu_i \cdot  v] \langle w \rangle^*
\end{equation}
Here we note that changing the enumeration of $T_1$ and $T_2$,
corresponds to multiplying $ \langle \nu_i \cdot v \rangle $ and $[w]$ by $-1$ and therefore 
the product $\langle \nu_i \cdot v \rangle [w]$ is independent of the enumeration.

\end{itemize}

\subsection{The Method} 
\label{sec:cutfem}
The discontinuous cut finite element method takes the form: 
 find $u_h = (u_{h,0}, u_{h,1},u_{h,2}) \in W_h$ such that 
\begin{equation}\label{eq:weakformuh}
A_h(u_h,v) = L_h(v) \qquad \forall v \in W_h
\end{equation}
The forms are defined by 
\begin{align}
A_h(v,w) &= \sum_{i=0}^2 \tkapi \Big( a_{h,i}(v,w) +  b_{h,i}(v,w) +  s_{h,i}(v,w) \Big)
+ \sum_{i=1}^2 (\kappa_{0,i}^{-1} [\kappa v]_i,[\kappa w ]_i)_{\Omega_0}
\\
L_h(v) &=
 \sum_{i=0}^2 (f_i,\tkapi v_i)_{\Omega_i}
\end{align}
with  $\widetilde{\kappa}_{0} = 1$, $\tkapi = \kappa_{0,i}^{-1} \kappa_i$ for $i=1,2$, and the forms $a_{h,i}$ and $b_{h,i}$ defined by
\begin{align}
a_{h,i}(v_i,w_i)&
=( A_i \nabla_i v_i,\nabla_i w_i)_{\mcK_{h,i}} - (\langle \nu_i \cdot A_i \nabla_i v_i \rangle ,[w_i])_{\mcE_{h,i}} 
 \\
& \qquad 
- ([v_i],\langle \nu_i \cdot A_i \nabla_i w_i \rangle )_{\mcE_{h,i}}+ (\lambda_{a_i} h^{-1} [v_i], [w_i])_{\mcE_{h,i}}
 \\
 b_{h,i}(v_i,w_i)&=\frac{1}{2} \left( (\beta \cdot \nabla_i v_i,w_i)_{\mcK_{h,i}} -(v_i,\beta \cdot \nabla_i w_i)_{\mcK_{h,i}} \right)
\\
& \qquad 
+\frac{1}{2}\left(((\nu_i \cdot \beta)\langle v_i \rangle  ,[w_i])_{\mcE_{h,i}}  - ((\nu_i \cdot \beta)[v_i],\langle w_i \rangle )_{\mcE_{h,i}} \right) 
\\
&\qquad 
+ (\lambda_{b_i} [v_i], [w_i])_{\mcE_{h,i}}
%
\end{align}
where 
\begin{equation}\label{eq:weightnorm}
\lambda_{a_i}=\tau_{a_i} \| \nu_i \|^2_{A_i} = \tau_{a_i} \nu_i \cdot A_i \nu_i ,\qquad 
\lambda_{b_i} = \tau_{b_i} |\nu_i \cdot \beta|
\end{equation}
with parameters $\tau_{a_i}>0$ sufficiently large to guarantee coercivity, see Lemma \ref{lem:ahsh-coer} below,
and $\tau_{b_i} \geq 0$. 
The stabilization forms $s_{h,i}$, for $i=0,1,2$, are defined in (\ref{eq:stab-bulk}), (\ref{eq:stab-surface}), and (\ref{eq:stab-glob}) below and we derive the method 
in Section \ref{sec:derivation}.

%
%

\subsection{Definition of the Stabilization Forms}
\label{sec:stab}
We shall divide the elements in each active mesh $\mcT_{h,i}$ in two types, those with a large intersection with the domain $\Omega_i$ and 
those with a small intersection.  The small elements will be connected through a chain of face neighbours to a large element and together that 
set of elements form a macro element with a large intersection. The macro elements will essentially behave as standard finite elements in the bulk domains and for the surface domain 
we will have to add a stabilization in the direction normal to the surface since the partial differential equation at the surface only involves tangential derivatives.  We 
now make this approach precise with the following definitions.
\begin{itemize} 
\item An element $T$ in $\mcT_{h,i}$ has a large intersection if 
\begin{equation}\label{eq:largeel}
\gamma_i  \leq \frac{|T \cap \Omega_i|}{h_T^{d_i}} 
\end{equation}
where $h_T$ is the diameter of element $T$, $d_i$ is the dimension of the domain $\Omega_i$, 
and $\gamma_i$ is a positive constant which
 is independent of the element and the mesh parameter. The elements that are not large are defined to be small. Note that since the mesh is quasiuniform we have $h \sim h_T$ and it follows from (\ref{eq:largeel}) that 
 \begin{equation}
 h^{d_i} \lesssim |T \cap \Omega_i|
 \end{equation}
 
\item Let $\mcM_{h,i}$ be a macro element partition 
derived from $\mcT_{h,i}$, such that: each element $T \in \mcT_{h,i}$ belongs to precisely one $\mcT_{h,i,M}$, each macro 
element $M \in \mcM_{h,i}$ is a union of elements in $\mcT_{h,i,M}$, 
\begin{equation}
M = \cup_{T \in \mcT_{h,i,M}} T
\end{equation}
such that in $\mcT_{h,i,M}$ there is one element $T_M$ with a large intersection and all elements in $\mcT_{h,i,M}$ are 
connected to $T_M$ via a uniformly bounded number of internal faces in $\mcT_{h,i,M}$. 

\item Let $\mcF_{h,i}(M)$ be the set of interior faces in $\mcT_{h,i,M}$. Note that $\mcF_{h,i}(M)$ is empty when $M$ 
consist of only one element (with a large intersection). For each $M \in \mcM_{h,i}$ with $i=1,2,$ 
(the bulk domains)
we define the stabilization form 
\begin{equation}\label{eq:stab-bulk}
s_{h,i,M}(v,w) = \sum_{k=0}^1 \tau_{i,k} h^{2k - 1} ([\nabla^k v ], [\nabla^k w])_{\mcF_{h,i}(M)}
\end{equation}
and for $i=0$, (the surface domain)
\begin{equation}\label{eq:stab-surface}
s_{h,0,M}(v,w) =  \sum_{k=0}^1  \tau_{0,k}  h^{2k - 2}  ([\nabla^k v ], [\nabla^k w])_{\mcF_{h,0}(M)} 
+  \tau_{0,2} h^2 (\nabla_n v , \nabla_n w)_{M \cap \Omega_0}
\end{equation}
Here, $[\nabla^0 ]= [v ]$, $[\nabla^1 v ] = [\nabla v]$, and $\tau_{i,k}>0$ are positive parameters of the form 
\begin{equation}
\tau_{i,k} \sim c_{i,k} \| A_i\|_{L^\infty(M)}
\end{equation}
with $c_{i,k}>0$. The macro element stabilization control the full jump $[\nabla v]$ across faces internal to the macro element and the normal derivative $\nabla_{n} v$ at the interface. 

\item Given the macro element stabilization forms defined by (\ref{eq:stab-bulk}) and 
(\ref{eq:stab-surface}) we define the global stabilization forms
\begin{align}\label{eq:stab-glob}
s_{h,i}(v,w)&= \sum_{M \in \mcM_{h,i}} s_{h,i,M}(v,w) 
\end{align}
\end{itemize}

\begin{rem} Note that  \emph{there is no stabilization acting on the faces at the interface between two neighbouring 
macro elements}. In contrast, standard stabilization, which we refer to as full stabilization, includes stabilization terms also on those faces.
See Figure \ref{fig:a} for an illustration of a macro element partition 
and the edges on which stabilization is applied and Figure \ref{fig:illustfullvsmacrostab} for a comparison with full stabilization. 
\end{rem}

\begin{rem} The scaling, with respect to the mesh parameter $h$, in the different stabilization terms is chosen in such a way that 
the forthcoming coercivity estimate in Lemma \ref{lem:ahsh-coer}, which depend on the technical Lemma \ref{lem:macro-var}, holds. 
\end{rem} 

We next formulate a basic algorithm for computation of the macro element partition. We seek to generate 
macro elements that contain as few elements as possible, which corresponds to stabilization on as few faces as possible. Note, however, that the theoretical developments only requires a uniform bound on the number of elements in each macroelement and therefore it is not critical to find the optimal partition into macro elements. The algorithm computes a set 
\begin{equation}
\mcF^*_{h,i} = \cup_{M \in \mcM_{h,i}} \mcF_{h,i}(M) \subset \mcF_{h,i}
\end{equation}
containing all the faces where stabilization is applied.
 
\paragraph{Algorithm 1.} Initiate $\mcF_{h,i}^*$ as the empty set.
\begin{enumerate}
\item Mark each element in $\mcT_{h,i}$ as large or small according to criteria \eqref{eq:largeel}.
\item For every small element that is face connected to one or several large elements, choose 
one face that connects the small element to a large neighbouring element.  Add that 
face to $\mcF^*_{h,i}$ for stabilization and mark the small element as large. 
\item Repeat 2 until all elements are marked as large. 
\end{enumerate}
Note that the macro element partition and the generated set $\mcF^*_{h,i}$ is not unique since there is no preference in the choice of face in Step 2. For the coupled bulk-surface problem we use the algorithm described above to generate $\mcF^*_{h,0}$ from the mesh $\mcT_{h,0}$. To generate $\mcF^*_{h,i}$, for  $i=1,2$ we use the same algorithm but in the choice of face in Step 2, faces that are already  in  $\mcF^*_{h,0}$ are preferred. When the three sets $\mcF^*_{h,i}$, $i=0,1,2,$ have been generated the stabilization forms are given by:
\begin{align}\label{eq:stab-surf-glob}
s_{h,0}(v,w)&=\sum_{k=0}^1 \tau_{0,k} h^{2k-2}([\nabla^k v_0 ], [\nabla^k w_0])_{\mcF^*_{h,0}}
+\tau_{0,2} h^2(\nabla_{n_0} v_0 , \nabla_{n_0} w_0)_{\mcK_{h,0}}
\\ \label{eq:stab-bulk-glob}
s_{h,i}(v,w) &= \sum_{k=0}^1 \tau_{i,k} h^{2k-1} ([\nabla^k v_i ], [\nabla^k w_i])_{\mcF^*_{h,i}},  
\qquad i=1,2
\end{align}


\begin{figure}\centering
\includegraphics[width=0.32\textwidth]{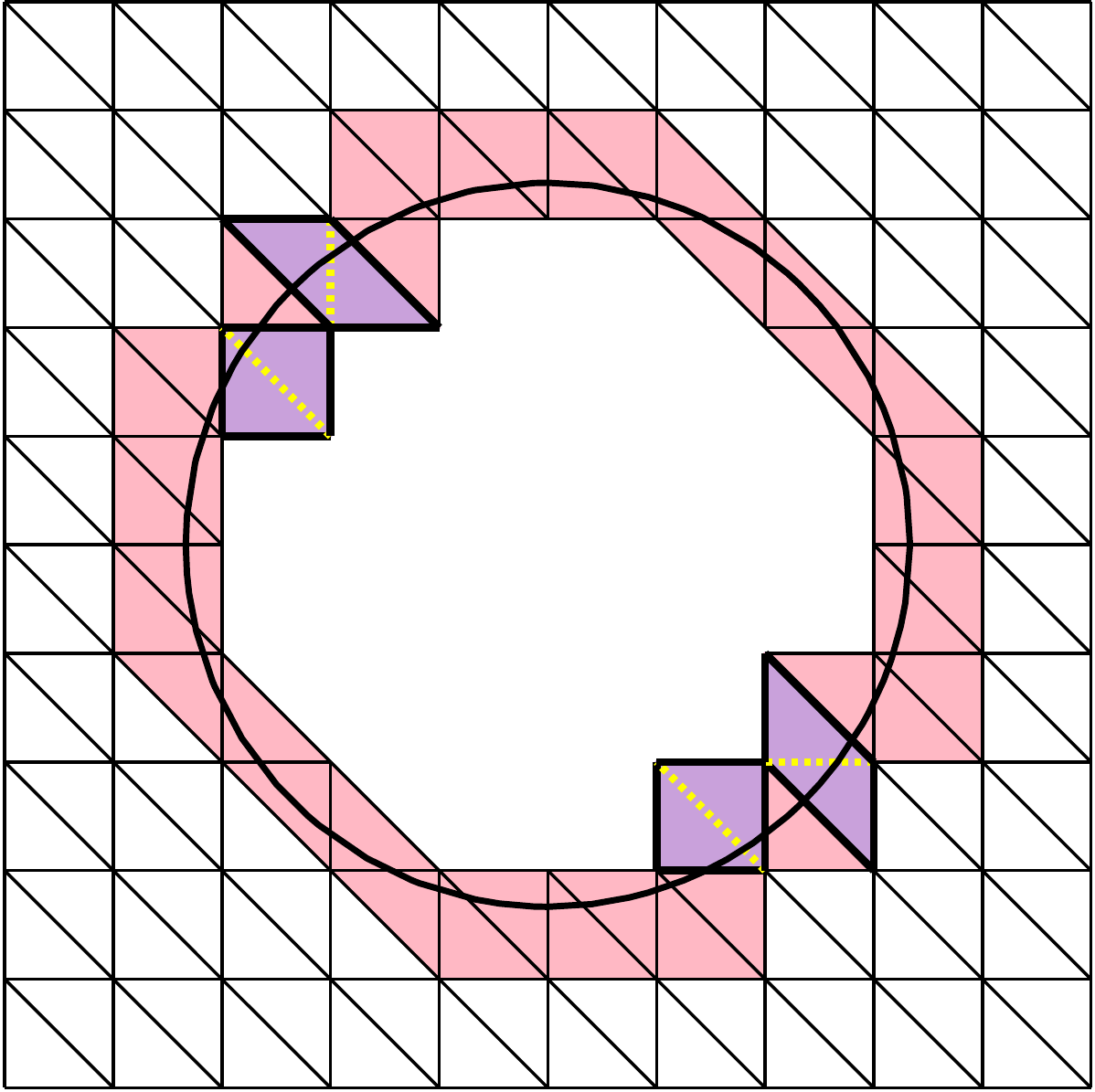} \hspace{0.3cm} 
\includegraphics[width=0.32\textwidth]{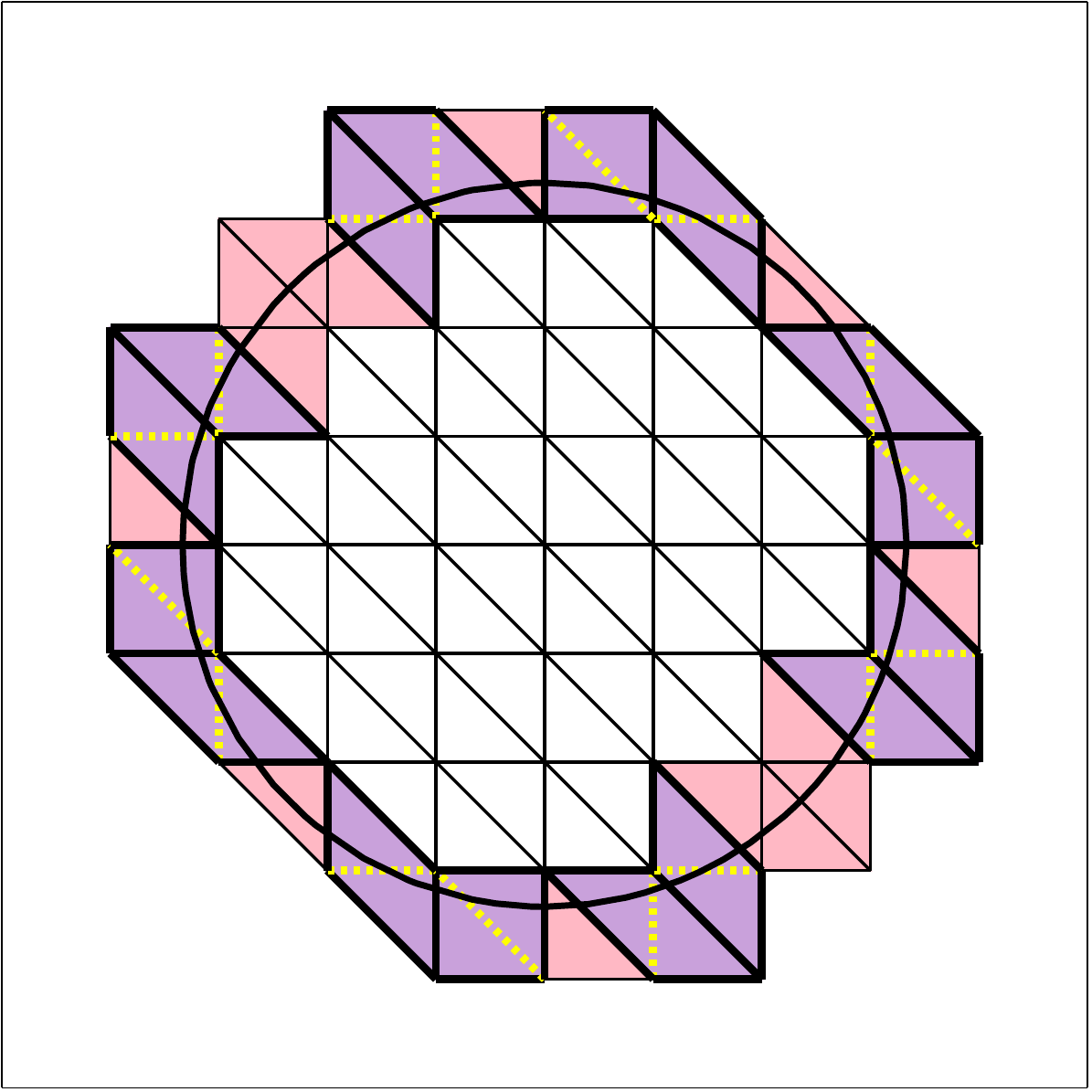} 
\caption{Illustration of macro elements and the patchwise stabilization. Elements in the active mesh $\mcT_{h,1}$ (left panel) and in $\mcT_{h,2}$ (right panel) are shown. Macro elements consisting of several triangles are marked in purple and their interior edges, the dotted lines, are edges on which stabilization is applied. There are in total 4 edges in $\mcF^*_{h,1}$ and 16 edges in $\mcF^*_{h,2}$. \label{fig:illuststab_bulk}}
\label{fig:a}
\end{figure} 

%
%
%
%

\subsection{Properties of the Macro Element Stabilization}

We now present some useful results on the properties provided by the macro element stabilization. A common theme is the observation that the macro element stabilization enables us to control the difference between a general discontinuous piecewise linear function on the active mesh and an approximation which is discontinuous piecewise linear on the macro elements. For the latter function standard finite element estimates holds since the macro elements have a large intersection with the domain.

The first bound shows that we can estimate the difference between a discontinuous piecewise linear function and a suitable linear function 
on a macro element by the stabilization terms on the macro element. The second bound is the stabilization property that follows essentially from \cite{LaZa19}, where continuous higher order element were considered and the macro elements appeared  as a technical tool in the proofs.

\begin{lem}\label{lem:macro-approx} There is a constant such that for all $v \in W_{h,i}$ and all macro elements $M\in \mcM_{h,i}$, $i=0,1,2,$
\begin{align}\label{eq:approx-est}
\inf_{v_M \in P_1(M)} \|  \nabla^m (v - v_M) \|_M^2 \lesssim  h^{2(1-m)}  \sum_{k=0}^1 h^{2k-1} \| [\nabla^k v ] \|^2_{\mcF_{h,i}(M)},\quad m=0,1
\end{align}
\end{lem}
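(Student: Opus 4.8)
The plan is to test the infimum with the affine polynomial $v_M$ obtained by restricting $v$ to the large element $T_M \in \mcT_{h,i,M}$ and extending it affinely to all of $M$; since $M$ is connected this is a legitimate element of $P_1(M)$, so it suffices to bound $\|\nabla^m(v - v_M)\|_M^2$ for this particular choice. By construction $v - v_M \equiv 0$ on $T_M$, and for any other $T \in \mcT_{h,i,M}$ the defining property of the macro element partition provides a chain of face neighbours $T_M = T^{(0)}, T^{(1)}, \dots, T^{(L)} = T$ inside $M$, of length $L$ bounded by a uniform constant $L_{\max}$, whose successive shared faces $F^{(1)}, \dots, F^{(L)}$ are distinct members of $\mcF_{h,i}(M)$. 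Telescoping gives $v - v_M = v|_T - v|_{T_M} = \sum_{\ell=1}^L q_\ell$ on $T$, where each $q_\ell := v|_{T^{(\ell)}} - v|_{T^{(\ell-1)}}$ is a globally affine function.

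The core of the argument is a local estimate of $q_\ell$ by the jumps of $v$ on $F^{(\ell)}$. Since $v|_{T^{(\ell)}}$ and $v|_{T^{(\ell-1)}}$ are affine, $\nabla q_\ell$ is a constant vector equal, up to sign, to $[\nabla v]$ on $F^{(\ell)}$, while the trace of $q_\ell$ on $F^{(\ell)}$ equals, up to sign, $[v]$ there. Using the shape regularity and quasiuniformity of the mesh we have $|T| \sim h^d$ and $|F^{(\ell)}| \sim h^{d-1}$, hence $|\nabla q_\ell|^2 \sim h^{-(d-1)}\|[\nabla v]\|^2_{F^{(\ell)}}$ and $\|\nabla q_\ell\|_T^2 = |T|\,|\nabla q_\ell|^2 \lesssim h\,\|[\nabla v]\|^2_{F^{(\ell)}}$. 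For $q_\ell$ itself, let $x_{F^{(\ell)}}$ be the centroid of $F^{(\ell)}$; since $q_\ell$ is affine and $M$ has diameter of order $h$, $\|q_\ell\|_{L^\infty(M)} \le |q_\ell(x_{F^{(\ell)}})| + Ch\,|\nabla q_\ell|$, and as $q_\ell(x_{F^{(\ell)}})$ is the mean of $\pm[v]$ over $F^{(\ell)}$ we get $|q_\ell(x_{F^{(\ell)}})|^2 \lesssim h^{-(d-1)}\|[v]\|^2_{F^{(\ell)}}$; multiplying $\|q_\ell\|_{L^\infty(M)}^2$ by $|T| \sim h^d$ then yields $\|q_\ell\|_T^2 \lesssim h\,\|[v]\|^2_{F^{(\ell)}} + h^3\,\|[\nabla v]\|^2_{F^{(\ell)}}$. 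Both bounds are captured uniformly by $\|\nabla^m q_\ell\|_T^2 \lesssim h^{2(1-m)}\sum_{k=0}^1 h^{2k-1}\|[\nabla^k v]\|^2_{F^{(\ell)}}$ for $m = 0, 1$.

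The rest is assembly. For fixed $T$, the triangle inequality and Cauchy--Schwarz over the chain (of length $\le L_{\max}$) give $\|\nabla^m(v - v_M)\|_T^2 \lesssim h^{2(1-m)}\sum_{\ell=1}^L\sum_{k=0}^1 h^{2k-1}\|[\nabla^k v]\|^2_{F^{(\ell)}} \le h^{2(1-m)}\sum_{k=0}^1 h^{2k-1}\|[\nabla^k v]\|^2_{\mcF_{h,i}(M)}$, since the $F^{(\ell)}$ are distinct faces of $\mcF_{h,i}(M)$. Summing this over the uniformly bounded number of elements $T \in \mcT_{h,i,M}$ produces the claimed estimate; the degenerate case $M = \{T_M\}$ is trivial since then both sides vanish.

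The only genuinely delicate point I expect is the $L^2$ estimate of $q_\ell$: one must ensure that jump data supported on the single face $F^{(\ell)}$ controls the affine function $q_\ell$ on a possibly different simplex $T$, which succeeds precisely because the macro element has diameter of order $h$, so that transporting the pointwise bound from $F^{(\ell)}$ to $T$ costs only the factor $h|\nabla q_\ell|$, already dominated by the gradient-jump term. The remaining care needed is reconciling the $|T| \sim h^d$ versus $|F^{(\ell)}| \sim h^{d-1}$ scalings with the target powers $h^{2(1-m)}h^{2k-1}$, which is pure accounting; note in particular that the computation and the exponents are identical for $i = 0, 1, 2$, the tangential nature of the surface equation playing no role in this lemma.
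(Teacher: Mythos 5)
Your proof is correct and takes essentially the same route as the paper: the same choice of $v_M$ as the affine extension of $v|_{T_M}$ from the large element, followed by propagation along the uniformly bounded chain of face neighbours inside the macro element. The only difference is organizational -- you derive the two-element jump estimate (the paper's inequality (\ref{eq:stab-pairs})) from scratch via the telescoping decomposition and the explicit $|T|\sim h^d$, $|F|\sim h^{d-1}$ scalings, whereas the paper simply recalls that estimate and iterates it.
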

\begin{proof} Let $T_M \in \mcT_{h,i}$ be the element in $M$ that has a large intersection with $\Omega_i$ and let $v_M \in P_1(M)$ be such that $v_M |_{T_M}= v|_{T_M}$. We recall that 
for two elements $T_1$ and $T_2$ sharing a face $F$ there is a constant such that for each 
$w = (w_1,w_2) \in P_1(T_1) \oplus P_1(T_2)$,
\begin{equation}\label{eq:stab-pairs}
\| \nabla^m w_1 \|^2_{T_1} \lesssim \| \nabla^m w_2 \|^2_{T_2} + h^{2(1-m)} \sum_{k=0}^1 h^{2k-1} \| [\nabla^k w ] \|^2_{F} 
\end{equation}
Setting $w = v - v_M$, noting that $v-v_M = 0$ on $T_M$, and then using  (\ref{eq:stab-pairs})  repeatedly together with the fact that the number of elements in each macro element is uniformly bounded completes the proof. 
\end{proof} 

\begin{lem} \label{lem:macro-control}
There is a constant such that for all $v \in W_{h,i}$ and all macro elements $M\in \mcM_{h,i}$, $i=0,1,2,$
\begin{align}\label{eq:stab-est}
&\| \nabla^m v \|_M^2 
\\
& \nonumber
\lesssim  
\begin{cases}
  \| \nabla_i^m v \|^2_{M \cap \Omega_i} 
+ h^{2(1-m)} \Big( \sum_{k=0}^1 h^{2k-1} \| [\nabla^k v ] \|^2_{\mcF_{h,i}(M)}  \Big), & i=1,2
\\
h \| \nabla_0^m v \|^2_{M \cap \Omega_0} 
+ h^{2(1-m)} \Big( \sum_{k=0}^1 h^{2k-1} \| [\nabla^k v ] \|^2_{\mcF_{h,0}(M)}  + h \| \nabla_n v \|^2_{M\cap \Omega_0}  \Big) &
\end{cases}
\end{align}
for $m=0,1.$
\end{lem}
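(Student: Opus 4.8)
The plan is to decompose $v$ on the macro element into a single linear polynomial plus a remainder and to estimate the two pieces separately. Let $T_M \in \mathcal{T}_{h,i,M}$ be the large element of $M$ and let $v_M \in P_1(M)$ be the linear polynomial with $v_M|_{T_M} = v|_{T_M}$, exactly as in the proof of Lemma \ref{lem:macro-approx}. By the triangle inequality $\|\nabla^m v\|_M^2 \lesssim \|\nabla^m v_M\|_M^2 + \|\nabla^m(v - v_M)\|_M^2$, and the second term is already one of the jump contributions on the right-hand side by Lemma \ref{lem:macro-approx}. It remains to bound $\|\nabla^m v_M\|_M^2$ and then to rewrite the resulting $v_M$-quantities in terms of $v$.

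For the polynomial part I would first pass from $M$ to $T_M$: since $M$ is a union of a uniformly bounded number of shape-regular simplices of diameter $\sim h$ while $T_M$ contains a ball of radius $\sim h$, a norm-equivalence argument on the finite-dimensional space $P_1$ gives $\|\nabla^m v_M\|_M^2 \lesssim \|\nabla^m v_M\|_{T_M}^2$. Next I would use the large-intersection property \eqref{eq:largeel}. For $i=1,2$, where $|T_M\cap\Omega_i| \gtrsim h^{d}\sim |T_M|$, a scaling argument for linear polynomials on a subset of fixed relative volume yields $\|\nabla^m v_M\|_{T_M}^2\lesssim\|\nabla_i^m v_M\|^2_{T_M\cap\Omega_i}$. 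For $i=0$ the intersection is a $(d-1)$-dimensional patch of measure $\gtrsim h^{d-1}$, which introduces the extra factor $h$; for $m=1$ this is immediate since $\nabla v_M$ is constant on $T_M$ and $|\nabla v_M|^2 = |\nabla_0 v_M|^2 + |\nabla_n v_M|^2$ pointwise, whence $\|\nabla v_M\|_{T_M}^2 = |\nabla v_M|^2|T_M|\lesssim h|\nabla v_M|^2|T_M\cap\Omega_0| = h\big(\|\nabla_0 v_M\|^2_{T_M\cap\Omega_0}+\|\nabla_n v_M\|^2_{T_M\cap\Omega_0}\big)$.

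To finish I would re-express the $v_M$-norms in terms of $v$. For $i=1,2$ simply $\|\nabla_i^m v_M\|_{M\cap\Omega_i}\le\|\nabla_i^m v\|_{M\cap\Omega_i}+\|\nabla^m(v-v_M)\|_M$, and the last term is bounded by the jump contributions via Lemma \ref{lem:macro-approx}. For $i=0$ the same splitting is applied to both the tangential and the normal derivative, but the remainder now has to be measured on the surface patch $M\cap\Omega_0$; since $v-v_M$ is piecewise linear, $\nabla^2(v-v_M)=0$ on each simplex, and a standard trace inequality for cut elements (combined with an inverse estimate on each simplex) gives $\|\nabla^k(v-v_M)\|^2_{M\cap\Omega_0}\lesssim h^{-1}\|\nabla^k(v-v_M)\|^2_M$; invoking Lemma \ref{lem:macro-approx} once more and tracking the powers of $h$ shows these remainder terms are absorbed by the jump terms on the right-hand side. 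This closes the estimate in all cases.

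The main obstacle is the $i=0$, $m=0$ geometric estimate $\|v_M\|_{T_M}^2\lesssim h\|v_M\|^2_{T_M\cap\Omega_0}+h^3\|\nabla_n v_M\|^2_{T_M\cap\Omega_0}$, which is precisely why the normal-derivative term is built into the surface stabilization \eqref{eq:stab-surface}: a nonzero linear polynomial can be small --- or even vanish --- on a nearly flat patch $\Omega_0\cap T_M$ while being of size $h|\nabla v_M|$ on $T_M$, so $\|v_M\|_{T_M\cap\Omega_0}$ by itself cannot control $\|v_M\|_{T_M}$. The remedy is to split $v_M$, relative to the tangent plane of $\Omega_0$ at a point $x_0\in T_M\cap\Omega_0$, into its affine part along that plane --- controlled by a $(d-1)$-dimensional polynomial norm-equivalence on the patch, whose area is $\gtrsim h^{d-1}$ by \eqref{eq:largeel} --- and the part proportional to the signed distance to the plane, which is controlled by $h\|\nabla_n v_M\|_{T_M\cap\Omega_0}$; the smoothness assumption on $\Omega_0$, quantified by the tubular-neighbourhood radius $\delta_0$, ensures that $\Omega_0\cap T_M$ deviates from this tangent plane by only $O(h^2)$, which is small enough for the argument to go through. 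This last estimate is essentially the one established in \cite{LaZa19}.
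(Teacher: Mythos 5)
Your proof is correct and follows essentially the same route as the paper: the same decomposition $v = v_M + (v-v_M)$ with $v_M|_{T_M}=v|_{T_M}$, Lemma \ref{lem:macro-approx} for the remainder, and the large-intersection property to pass from $T_M$ to $T_M\cap\Omega_i$ (with the extra factor $h$ and the normal-derivative term in the codimension-one case). The only difference is that your final ``re-express $v_M$ in terms of $v$'' step is unnecessary --- since $v_M=v$ on $T_M$, the norms over $T_M\cap\Omega_i$ are already norms of $v$ and can simply be enlarged to $M\cap\Omega_i$, which is what the paper does.
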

\begin{proof} Let $T_M \in \mcT_{h,i}$ be the element in the macro element $M$ that has a large intersection 
with $\Omega_i$, see (\ref{eq:largeel}).  Let $v_M \in P_1(M)$ be such that
$v_M |_{T_M}= v|_{T_M}$. Then we have using (\ref{eq:approx-est}) in Lemma \ref{lem:macro-approx},
\begin{align}
\|\nabla^m v\|^2_M &\lesssim  \|\nabla^m v_M\|^2_M + \| \nabla^m ( v - v_M )\|^2_M
\\ \label{eq:stab-prf-aa}
&\lesssim  \|\nabla^m v_M\|^2_M + h^{2(1-m)} \sum_{k=0}^1 h^{2k-1} \| [\nabla^k w ] \|^2_{\mcF_{h,i}(M)} 
\end{align}
To estimate the first term on the right hand side in (\ref{eq:stab-prf-aa}) we have the following 
inverse inequality in the case of bulk domains
\begin{align} \label{eq:stab-prf-a}
\| \nabla^m v_M \|^2_M \lesssim \| \nabla^m v_M \|^2_{T_M} \lesssim  \| \nabla^m v_M \|^2_{T_M \cap \Omega_i}
\lesssim  \| \nabla^m v \|^2_{M \cap \Omega_i}, \quad i=1,2
\end{align}
For the surface domain, $i=0$, we instead of (\ref{eq:stab-prf-a}) have
\begin{align}
\| \nabla^m v_M \|^2_M &\lesssim \| \nabla^m v_M \|^2_{T_M}
\\
&\lesssim 
h\| \nabla^m_0 v_M \|^2_{T_M\cap \Omega_0} + h^{3 - 2m} \|\nabla_n v_M \|^2_{T_M\cap \Omega_0} 
\\ \label{eq:stab-prf-c}
&\lesssim h\| \nabla^m_0 v \|^2_{M\cap \Omega_0} + h^{2(1-m)} h \| \nabla_n v \|^2_{M\cap \Omega_0}  
\end{align}
where we recall that $\nabla_0$ is the tangential derivative associated with $\Omega_0$ and that the codimension for the surface 
is $d-d_0 = 1$.  We also note that  we need the normal stabilization, the last term on the right hand side of (\ref{eq:stab-surface}), 
at $\Omega_0$ to pass from $T_M$ to the intersection $T_M \cap \Omega_0$. Together, (\ref{eq:stab-prf-aa}),  (\ref{eq:stab-prf-a}), and  (\ref{eq:stab-prf-c}) 
complete the proof.
\end{proof}

We next show that, on the bulk domains, the macro element stabilization together with terms 
that are controlled by the weak form in the method, control face stabilization on all faces. Recall
that the stabilization is only added inside the macro elements and that the Nitsche penalty term only 
acts on the intersection between a face and the domain and is therefore significantly weaker compared 
to stabilization on all faces. To prepare for the estimate we first show a 
Poincar\'e type estimate for macro elements in Lemma \ref{lem:mavro-poincare}.

\begin{lem} \label{lem:mavro-poincare}  For the bulk domains ($i=1,2$), there 
is a constant such that for all $v \in W_{h,i}$ and all macro elements $M\in \mcM_{h,i}$,
\begin{align}\label{eq:stab-est-poincare}
\inf_{w_M \in P_0(M)} \|  v - w_M \|_M^2 \lesssim  h^2  \Big( \| \nabla v \|^2_{M \cap \Omega_i}  + 
 \sum_{k=0}^1 h^{2k-1} \| [\nabla^k v ] \|^2_{\mcF_{h,i}(M)} \Big) 
\end{align}
\end{lem}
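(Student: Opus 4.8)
The plan is to reduce the Poincar\'e estimate on the macro element $M$ to the corresponding elementwise Poincar\'e estimate on the distinguished large element $T_M$, where the intersection $T_M\cap\Omega_i$ is comparable to the full element, and then to transfer control from $T_M$ to the rest of $M$ using the macro element approximation bound in Lemma \ref{lem:macro-approx}. Concretely, choose $w_M \in P_0(M)$ to be the mean value of $v|_{T_M}$ over $T_M\cap\Omega_i$ (a constant on all of $M$). First I would write $\|v - w_M\|_M^2 \lesssim \|v - v_M\|_M^2 + \|v_M - w_M\|_M^2$, where $v_M\in P_1(M)$ is the function from Lemma \ref{lem:macro-approx} agreeing with $v$ on $T_M$; the first term is already bounded by the stabilization terms via (\ref{eq:approx-est}) with $m=0$, contributing $h^{2}\sum_{k=0}^1 h^{2k-1}\|[\nabla^k v]\|^2_{\mcF_{h,i}(M)}$ as required.

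For the second term, since $v_M\in P_1(M)$ and $w_M\in P_0(M)$ the difference $v_M - w_M$ is a single linear polynomial on the connected shape-regular set $M$, so by an inverse inequality $\|v_M - w_M\|_M^2 \lesssim \|v_M - w_M\|_{T_M}^2$, and by a further inverse estimate using that $T_M\cap\Omega_i$ has measure $\gtrsim h^d$ (the large intersection property (\ref{eq:largeel})), $\|v_M - w_M\|_{T_M}^2 \lesssim \|v_M - w_M\|_{T_M\cap\Omega_i}^2 = \|v - w_M\|_{T_M\cap\Omega_i}^2$. Now $w_M$ is precisely the mean of $v$ over $T_M\cap\Omega_i$, so the classical Poincar\'e inequality on the domain $T_M\cap\Omega_i$ gives $\|v - w_M\|_{T_M\cap\Omega_i}^2 \lesssim h^2 \|\nabla v\|_{T_M\cap\Omega_i}^2 \le h^2\|\nabla v\|_{M\cap\Omega_i}^2$. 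Combining the two terms yields (\ref{eq:stab-est-poincare}).

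The main obstacle, and the point requiring care, is the Poincar\'e constant on the cut domain $T_M\cap\Omega_i$: one needs that it scales like $h^2$ uniformly over all admissible cuts. This is exactly where the large intersection criterion (\ref{eq:largeel}) is essential — a naive Poincar\'e inequality on an arbitrarily thin sliver would have a blow-up constant. Since $\gamma_i \le |T_M\cap\Omega_i|/h_T^{d}$ and $T_M$ is shape regular with $h_T\sim h$, the set $T_M\cap\Omega_i$ contains a ball of radius $\sim h$ (or at least is a John domain with parameters controlled by $\gamma_i$ and the shape regularity), so its Poincar\'e constant is $\lesssim h$; alternatively one argues by the standard scaling/compactness argument over the finite family of reference cut configurations, or simply extends $v$ off $T_M\cap\Omega_i$ using the fact that $v$ is a polynomial on all of $T_M$ and applies Poincar\'e on $T_M$ together with the inverse estimate above. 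Either route is routine once the large-intersection hypothesis is invoked; no delicate argument beyond that is needed, and the transfer from $T_M$ to $M$ is handled entirely by Lemma \ref{lem:macro-approx} and the uniform bound on the number of elements per macro element.
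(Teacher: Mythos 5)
Your argument is correct and follows essentially the same route as the paper's proof: split off the linear function $v_M$ agreeing with $v$ on $T_M$, absorb $\|v-v_M\|_M^2$ into the stabilization terms via Lemma \ref{lem:macro-approx}, and reduce the remaining constant-approximation error to an elementwise Poincar\'e inequality on the large element together with the polynomial norm equivalence $\|p\|_{T_M}\lesssim \|p\|_{T_M\cap\Omega_i}$ afforded by \eqref{eq:largeel}. One caveat: your primary justification of the Poincar\'e inequality directly on the cut set $T_M\cap\Omega_i$ is the weak link, since the measure condition \eqref{eq:largeel} by itself does not guarantee that $T_M\cap\Omega_i$ contains a ball of radius $\sim h$ or is a John domain with uniform parameters. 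The last alternative you offer --- exploit that $v$ is a single polynomial on all of $T_M$, apply Poincar\'e on the uncut element, and pass to $T_M\cap\Omega_i$ by the inverse estimate --- is precisely what the paper does (it takes $w_M|_{T_M}$ to be the $L^2$ projection of $v|_{T_M}$ onto constants over the full element $T_M$), and since the mean over $T_M\cap\Omega_i$ minimizes the $L^2(T_M\cap\Omega_i)$ distance to constants, your choice of $w_M$ is then covered as well.
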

\begin{proof} Let $T_M \in \mcT_{h,i}$ be the element in $M$ that has a large intersection with $\Omega_i$ and let $v_M \in P_1(M)$ 
be such that $v_M |_{T_M}= v|_{T_M}$, and $w_{M} \in P_0(M) $ be such that $w_{M}|_{T_M}$ is the $L^2$ projection of 
$v_M|_{T_M} = v|_{T_M}$ onto constants at $T_M$. Then we have 
\begin{align}
\| v - w_M \|^2_{M} 
&\lesssim 
\| v - v_M\|_M^2 + \| v_M - w_{M} \|^2_{T_M} 
\\
&
\lesssim  \| v - v_M\|_M^2 + h^2 \| \nabla v_M\|^2_{T_M}
\\ 
&
\lesssim  \| v - v_M\|_M^2 + h^{2} \| \nabla v_M\|^2_{T_M\cap \Omega_i}
\\ 
&
\lesssim  \| v - v_M\|_M^2 + h^{2}  \| \nabla (v -v_M) \|^2_{T_M\cap \Omega_i} +  h^{2} \| \nabla v \|^2_{T_M\cap \Omega_i}
\\ \label{eq:stab-prf-e} 
&
\lesssim h^2 \sum_{k=0}^1 h^{2k-1} \| [\nabla^k v ] \|^2_{\mcF_{h,i}(M)} 
+ h^2 \| \nabla v \|^2_{M\cap \Omega_i}  
\end{align}
where in inequality (\ref{eq:stab-prf-e}) we employed  Lemma \ref{lem:macro-approx}, with $m=0$ and $m=1$.
\end{proof}

\begin{lem}\label{lem:controlstab}  For the bulk domains ($i=1,2$), there 
is a constant such that for all $v \in W_{h,i}$, 
\begin{align}\label{eq:stab-eqv}
\sum_ {k=0}^1 h^{2k-1} \| [\nabla^k v] \|^2_{\mcF_{h,i}} \lesssim \| \nabla v \|^2_{\mcK_{h,i}} 
+ h^{-1} \| [ v ] \|^2_{\mcE_{h,i}} 
+\sum_{M \in \mcM_{h,i}} \sum_{k=0}^1 h^{2k-1} \| [\nabla^k v ] \|^2_{\mcF_{h,i}(M)}
\end{align}
\end{lem}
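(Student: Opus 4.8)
The goal is to bound the full face-jump stabilization over all interior faces of the active mesh $\mcF_{h,i}$ by (i) the bulk energy term $\|\nabla v\|^2_{\mcK_{h,i}}$, (ii) the Nitsche penalty term $h^{-1}\|[v]\|^2_{\mcE_{h,i}}$, and (iii) the macro-element stabilization. The faces in $\mcF_{h,i}$ split into two groups: those lying inside some macro element (i.e.\ in $\mcF^*_{h,i} = \cup_M \mcF_{h,i}(M)$), and those lying \emph{between} two neighbouring macro elements. The first group contributes exactly the last term on the right-hand side, so the whole point is to control the jumps on the \emph{inter-macro-element} faces.

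The plan is the following. Fix a face $F \in \mcF_{h,i} \setminus \mcF^*_{h,i}$, shared by elements $T_1 \subset M$ and $T_2 \subset M'$ with $M \neq M'$. For the jump $[\nabla v]$ across $F$, which is the difference of the constant gradients $\nabla v|_{T_1}$ and $\nabla v|_{T_2}$, I would first replace $v$ on each macro element by the linear function $v_M \in P_1(M)$, resp.\ $v_{M'} \in P_1(M')$, agreeing with $v$ on the large element. Since $M$ and $M'$ each have a large intersection with $\Omega_i$, a standard inverse/trace inequality on the large elements $T_M$, $T_{M'}$ lets me estimate $h\|\nabla v_M\|^2_F \lesssim h^{-1}\|\nabla v_M\|^2_{T_M} \lesssim h^{-1}\|\nabla v_M\|^2_{T_M \cap \Omega_i}$, and similarly for $M'$; then Lemma \ref{lem:macro-approx} (with $m=1$) controls $\nabla(v - v_M)$ on $M$ and $\nabla(v - v_{M'})$ on $M'$ by the macro stabilization terms. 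For the jump $[v]$ across $F$ the analogue uses Lemma \ref{lem:mavro-poincare}: writing $v - w_M$, $v - w_{M'}$ for suitable constants $w_M, w_{M'} \in P_0$, a trace inequality gives $h^{-1}\|v - w_M\|^2_F \lesssim h^{-2}\|v - w_M\|^2_{T_M} \lesssim h^{-2}\|v - w_M\|^2_M$, which is bounded via Lemma \ref{lem:mavro-poincare} by $\|\nabla v\|^2_{M \cap \Omega_i}$ plus macro stabilization, and likewise for $M'$.

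The key difficulty is that along the face $F$ between the two macro elements we have \emph{no} stabilization and nothing forces $v|_{T_1}$ and $v|_{T_2}$ to be close pointwise — so we cannot bound $\|[v]\|_F$ or $\|[\nabla v]\|_F$ by anything local to $F$. The mechanism that saves us is the Nitsche penalty term $h^{-1}\|[v]\|^2_{\mcE_{h,i}}$, which controls $[v]$ on the \emph{intersection} $F \cap \Omega_i \in \mcE_{h,i}$. So after reducing $v$ to the linear functions $v_M$ on each side (which are globally defined affine functions whose values on all of $F$ are determined by their values on $F \cap \Omega_i$, using that $F \cap \Omega_i$ has positive $(d{-}1)$-measure because $M$, $M'$ have large intersections and — this is where the preference ordering in Algorithm 1, putting $\mcF^*_{h,0}$-faces first, matters only for $i=0$, not here), I can transfer the jump from $F$ to $F \cap \Omega_i$ at the cost of constants depending on $\gamma_i$. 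Concretely: $\|[v_M - v_{M'}]\|_F \lesssim \|[v_M - v_{M'}]\|_{F\cap\Omega_i} + h\|[\nabla(v_M - v_{M'})]\|_{F \cap \Omega_i}$ by the affine structure, and then $[v_M - v_{M'}] = [v] $ on $F \cap \Omega_i$ up to the approximation errors $v - v_M$, $v - v_{M'}$ handled by the lemmas above; the gradient jump on $F \cap \Omega_i$ is in turn bounded by $\|\nabla v\|_{\mcK_{h,i}}$ plus stabilization via Lemma \ref{lem:macro-approx}.

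Finally I would assemble: sum the face-by-face estimates over all $F \in \mcF_{h,i}$, using that each macro element has only a uniformly bounded number of face-neighbours so each macro-element quantity $\|\nabla v\|^2_{M \cap \Omega_i}$ and $\sum_{k} h^{2k-1}\|[\nabla^k v]\|^2_{\mcF_{h,i}(M)}$ appears a bounded number of times, and that $\cup_M (M \cap \Omega_i)$ has bounded overlap and covers $\mcK_{h,i}$. This yields $\sum_k h^{2k-1}\|[\nabla^k v]\|^2_{\mcF_{h,i}} \lesssim \|\nabla v\|^2_{\mcK_{h,i}} + h^{-1}\|[v]\|^2_{\mcE_{h,i}} + \sum_M \sum_k h^{2k-1}\|[\nabla^k v]\|^2_{\mcF_{h,i}(M)}$, as claimed. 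The only genuinely delicate point, worth stating carefully, is the transfer of a jump from a full face $F$ to its intersection $F \cap \Omega_i$ for affine functions — this needs $|F \cap \Omega_i| \gtrsim h^{d_i - 1}$, which follows from the large-intersection property of the adjacent macro elements together with shape regularity.
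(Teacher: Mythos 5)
Your overall strategy is right for the gradient jumps and for faces whose intersection with $\Omega_i$ is large, but there is a genuine gap at the one place you yourself flag as delicate: the transfer of the jump $[v]$ from a full inter-macro-element face $F$ to its intersection $F\cap\Omega_i$. You claim that $|F\cap\Omega_i|\gtrsim h^{d-1}$ follows from the large-intersection property of the two adjacent macro elements together with shape regularity. It does not. The criterion \eqref{eq:largeel} is a \emph{volume} condition on one element $T_M$ inside each macro element; it says nothing about the $(d-1)$-measure of the particular face $F$ that the two macro elements happen to share. That face can be cut by $\Omega_0$ so that $|F\cap\Omega_i|$ is arbitrarily small (this is the generic situation near the interface), and then the constant in your step extending the affine jump $v_M-v_{M'}$ from $F\cap\Omega_i$ back to all of $F$ degenerates: knowing a polynomial on a subset of vanishing relative measure does not control it on the whole face with a uniform constant. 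Since there is no stabilization on $F$ itself, nothing else is available locally, so the face-by-face argument as you set it up cannot close for these faces.

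The paper handles exactly this case with an idea your proposal is missing: a chain argument between macro elements. When $|F\cap\Omega_i|$ is not large, the two macro elements sharing $F$ are connected by a uniformly bounded chain $M_1,\dots,M_l$ of macro elements such that each consecutive pair shares a face $F_n$ with $|F_n\cap\Omega_i|\gtrsim h^{d-1}$; such faces exist because a macro element with a large volume intersection must have at least one boundary face with a large intersection. One then inserts piecewise constants $w_n\in P_0(M_n)$, telescopes $h^{-1}\|w_1-w_l\|^2_F\lesssim\sum_{n} h^{-1}\|w_n-w_{n+1}\|^2_{F_n}$ using only $|F|\sim|F_n|\sim h^{d-1}$ (valid for constants, no degenerate-measure issue), controls $h^{-1}\|[v]\|^2_{F_n}$ by the Nitsche penalty on the large intersections $F_n\cap\Omega_i$, and absorbs all differences $v-w_n$ via the macro Poincar\'e estimate of Lemma \ref{lem:mavro-poincare}. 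Your use of Lemmas \ref{lem:macro-approx} and \ref{lem:mavro-poincare}, and of the inverse inequality on faces with a large intersection, matches the paper; the chain construction for the small-intersection faces is the missing ingredient you need to add before the proof is complete.
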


\begin{proof} Let us start with the estimation of $h^{-1}\| [ v ] \|^2_{\mcF_{h,i}}$. We note that the Nitsche penalty term provides control over $h^{-1}\| [ v ] \|^2_{\mcE_{h,i}}$ where we recall that $\mcE_{h,i} = \mcF_{h,i} \cap \Omega_i$,  and it therefore remains to control 
 $h^{-1}\| [ v ] \|^2_{F}$ for faces $F \in \mcF_{h,i}$ which intersect the boundary $\Omega_0$. If $F$ has a large intersection with $\Omega_i$, i.e. there is a constant such that
\begin{align}\label{eq:largeF}
 |F \cap \Omega_i | \gtrsim h^{d-1}
\end{align} 
and  an inverse inequality directly gives
 \begin{align}\label{eq:face-in}
 h^{-1} \| [ v ] \|^2_ F \lesssim h^{-1} \| [ v ] \|^2_ {F \cap \Omega_i}
 \end{align}
Next we consider the case when (\ref{eq:largeF}) does not hold. Let $F$ be such a face, which is shared by two macro elements $M_i$ and $M_j$. Then there 
 is a chain 
of macro elements $\{ M_n \}_{n=1}^l$ connecting $M_i$ to $M_j$ ($M_1 = M_i$ and $M_l = M_j$) 
of uniformly bounded length $l$, such that two consecutive macro elements $M_n$ and $M_{n+1}$ share a face $F_n$ such that $|F_n \cap  \Omega_i| \gtrsim h^{d-1}$. 
Observing that a macro element must have at least one face on its boundary that has a large intersection with $\Omega_i$, we can then use that face to pass to a neighboring macro element.
%

Starting with the estimation of $h^{-1} \| [ v ]\|^2_F$. Letting $v_n=v|_{M_n}$, for 
$n=1,\dots,l,$ and keeping in mind that $v_i = v_1$ and $v_j = v_l$, we have for any 
$w \in \oplus_{M \in \mcM_{h,i}} P_0(M)$, with $w_n = w|_{M_n} \in P_0(M)$ for $n=1,\dots,l$,
\begin{align}
h^{-1} \| [ v ] \|^2_F &\lesssim h^{-1} \| [ w ] \|^2_F 
+ h^{-1} \| v_1 - w_1 \|^2_F + 
 h^{-1} \| v_l - w_l \|^2_F
\\
&\lesssim h^{-1} \| [ w ] \|^2_F +  h^{-2} \| v_1 - w_1 \|^2_{M_1} + 
 h^{-2}\| v_l - w_l \|^2_{M_l}
\end{align}
Here we added and subtracted $w_1$ and $w_l$, used the triangle inequality, and an inverse 
 trace inequality on the macro elements.
To estimate the first term on the right hand side we add and subtract $w_2,\dots, w_{l-1}$ and 
use the triangle inequality 
\begin{align}
h^{-1} \| [ w ] \|^2_F & = h^{-1} \| w_1 - w_l \|^2_F 
\lesssim \sum_{n=1}^{l-1} h^{-1} \| w_n - w_{n+1} \|^2_F 
 \lesssim \sum_{n=1}^{l-1} h^{-1} \|  w_n - w_{n+1} \|^2_{F_n} 
\end{align}
where we at last used the fact that $ w_n -  w_{n+1} $ is constant and $|F| \sim |F_n | \sim h^{d-1}$ to pass from $F$ to $F_n$. Now we pass back to $v$ from $w$ by adding and subtracting $v_n$ and 
$v_{n+1}$, and then using the triangle inequality 
\begin{align}
h^{-1} \|[w] \|^2_{F_n} &\lesssim h^{-1} \|[v ]\|^2_{F_n} 
+ h^{-1} \|w_n - v_n \|^2_{F_n} + h^{-1} \| w_{n+1} - v_{n+1} \|^2_{F_n}
\\
&\lesssim h^{-1} \|[v ]\|^2_{F_n} 
+ h^{-2} \|w_n - v_n \|^2_{M_n} + h^{-2} \| w_{n+1} - v_{n+1} \|^2_{M_{n+1}}
\end{align}
Collecting the bounds, taking the infimum over $w\in \oplus_{M\in \mcM_{h,i}} P_1(M)$, and using the Poincar\'e estimate in Lemma \ref{lem:mavro-poincare} on the macro elements, give
\begin{align}
&h^{-1} \| [ v ] \|^2_F \lesssim \sum_{n=1}^{l-1} h^{-1} \|[ v ]\|^2_{F_n}  
+ \sum_{n=1}^{l} h^{-2} \|v_n - w_n \|^2_{M_n}
\\ \label{eq:face-out}
 &\quad \lesssim  \sum_{n=1}^{l-1} h^{-1} \|[v ]\|^2_{F_n \cap \Omega_i}
+  \sum_{n=1}^{l-1} \Big( \| \nabla v_n \|^2_{M \cap \Omega_i}  + 
 \sum_{k=0}^1 h^{2k-1} \| [\nabla^k v_n ] \|^2_{\mcF_{h,i}(M)} \Big) 
\end{align}
Finally, summing over all faces $F \in \mcF_{h,i}$ and using the bounds 
 (\ref{eq:face-in}) and (\ref{eq:face-out}) we obtain 
\begin{align}\label{eq:face-aa}
h^{-1} \| [ v ] \|^2_{\mcF_{h,i}} 
&\lesssim h^{-1} \|[v ]\|^2_{\mcF_{h,i}\cap \Omega_i}
+ \sum_{M \in \mcM_{h,i}} \| \nabla v \|^2_{M \cap \Omega_i}  + 
\sum_{k=0}^1 h^{2k-1} \| [\nabla^k v_n ] \|^2_{\mcF_{h,i}(M)} 
\end{align}

It remains to estimate $h \| [\nabla v] \|^2_F$ for any $F \in \mcF_{h,i} \setminus \mcF_{h,i}^*$, where we recall that $\mcF_{h,i}^*$ is the set of faces where stabilization is applied. Using an inverse trace inequality on the elements, collecting the elements into macro elements, and then using the stability estimate in Lemma \ref{lem:macro-control}, with $m=1$ and $i=1,2,$ we obtain
\begin{align}
h \| [ \nabla v ] \|^2_{\mcF_{h,i}} &\lesssim \| \nabla v \|^2_{\mcT_{h,i}}
= \sum_{M \in \mcM_{h,i}}\| \nabla v \|^2_M 
\\ \label{eq:face-bb}
&\qquad 
\lesssim \sum_{M \in \mcM_{h,i}} 
\Big(  \| \nabla^m v \|^2_{M \cap \Omega_i} 
+ \sum_{k=0}^1 h^{2k-1} \| [\nabla^k v ] \|^2_{F}   \Big) 
\end{align}
Summing (\ref{eq:face-aa}) and (\ref{eq:face-bb}) and using the fact that $\mcF_{h,i} \cap \Omega_i = \mcE_{h,i}$ and $\mcT_{h,i} \cap \Omega_i = \mcK_{h,i}$ completes the proof.
\end{proof} 
\begin{figure}
\begin{center}
\includegraphics[scale=0.6]{./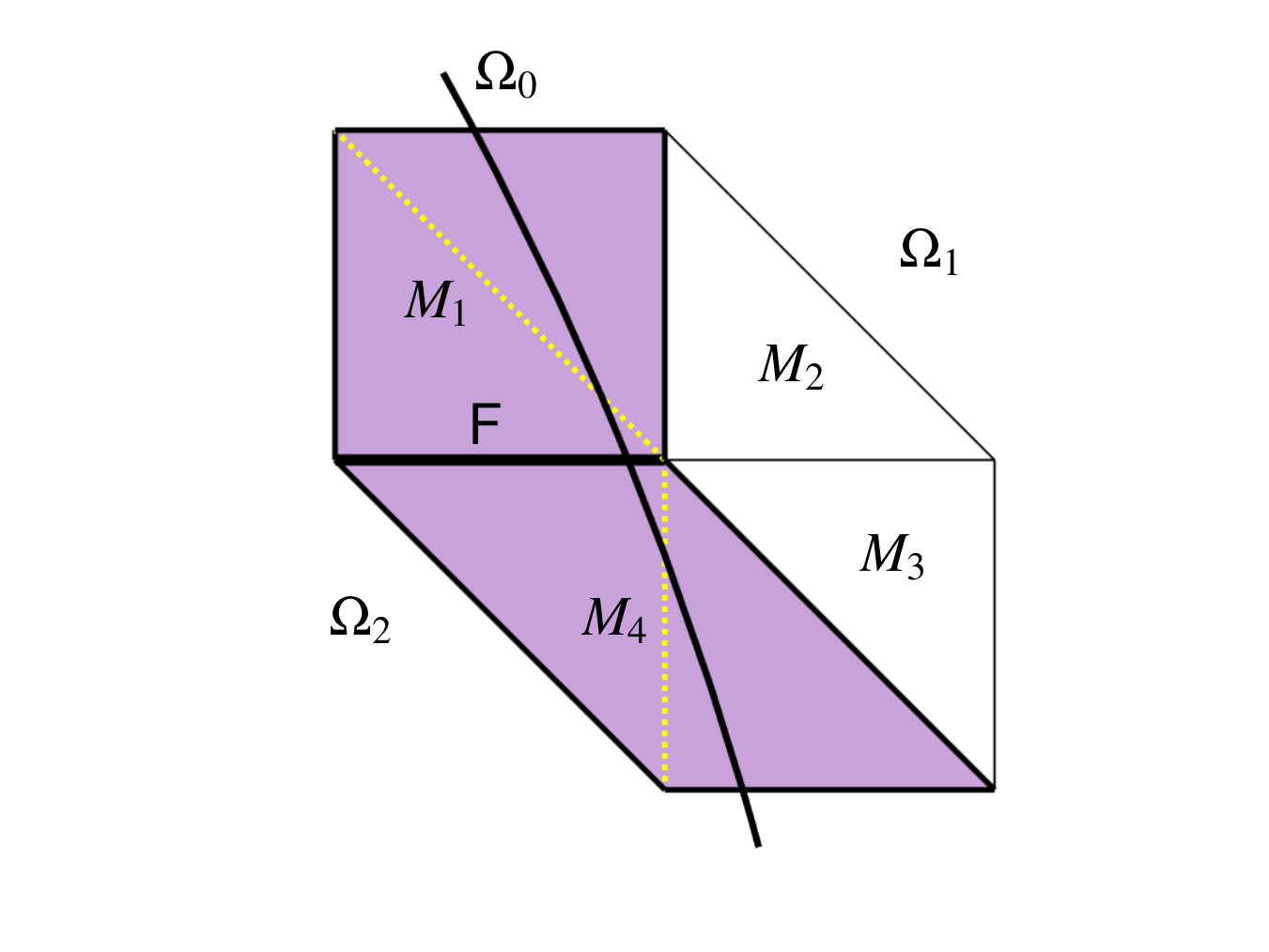}
\end{center}
\caption{The face $F$ with a small intersection with $\Omega_1$ (right side of the interface)  and a large intersection with $\Omega_2$ (left side of the interface) and 
the chain of macro elements $\{M_n\}_{n=1}^4$ that enables us to estimate the jump over the face
$F$.}
\end{figure}

We end this section by showing a trace inequality for the discontinuous function spaces on the bulk domains, 
which is of general interest, and will also be used in the derivation of a Poincar\'e inequality for the discrete 
spaces. The 
difficulty is that the functions are discontinuous so we can not directly employ a standard trace 
inequality and an element wise trace inequality would produce a non optimal factor of $h^{-1}$.

\begin{lem} \label{lem:discrete-trace} There is a constant such that for all $v \in W_{h,i}$, $i=1,2$,
\begin{align}\label{eq:trace-discont}
\| v \|^2_{\Omega_0} &\lesssim \| v \|^2_{H^1(\mcK_{h,i})} + h^{-1} \| [ v ] \|^2_{\mcF_{h,i}} 
\\
&
\lesssim  \| v \|^2_{H^1(\mcK_{h,i})} +  h^{-1} \| [ v ] \|^2_{\mcE_{h,i}} 
+\sum_{M \in \mcM_{h,i}} \sum_{k=0}^1 h^{2k-1} \| [\nabla^k v ] \|^2_{\mcF_{h,i}(M)}
\end{align}
\end{lem}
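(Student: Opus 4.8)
The plan is to reduce the estimate for the discontinuous function $v \in W_{h,i}$ to the already-established machinery. For the second inequality, note that it follows immediately from the first one combined with Lemma~\ref{lem:controlstab}: the term $\|\nabla v\|^2_{\mcK_{h,i}}$ is absorbed into $\|v\|^2_{H^1(\mcK_{h,i})}$, and $h^{-1}\|[v]\|^2_{\mcF_{h,i}}$ is bounded by $h^{-1}\|[v]\|^2_{\mcE_{h,i}}$ plus the macro-element stabilization sum. So the real work is the first inequality, $\|v\|^2_{\Omega_0} \lesssim \|v\|^2_{H^1(\mcK_{h,i})} + h^{-1}\|[v]\|^2_{\mcF_{h,i}}$.

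For the first inequality, the plan is to introduce a ``fattened'' continuous or at least globally-defined comparison function and exploit that a genuine trace inequality is available for it. Concretely, I would first let $\widetilde{v} \in H^1(\cup_{T\in\mcT_{h,i}} T)$ be the piecewise linear function agreeing with $v$ and consider a Cl\'ement/Scott--Zhang–type averaging operator $\pi_h : W_{h,i} \to V_h \cap C(\Omega)$ (continuous piecewise linears on the active mesh) that is constructed by averaging nodal values across elements. For such an operator one has the standard bound $\|v - \pi_h v\|^2_{\mcT_{h,i}} + h^2 \|\nabla(v-\pi_h v)\|^2_{\mcT_{h,i}} \lesssim h \|[v]\|^2_{\mcF_{h,i}}$, i.e.\ the deviation from continuity is controlled by the jumps. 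Then I would write
\begin{align}
\|v\|^2_{\Omega_0} \lesssim \|\pi_h v\|^2_{\Omega_0} + \|v - \pi_h v\|^2_{\Omega_0}.
\end{align}
For the first term on the right, since $\pi_h v$ is continuous piecewise linear on the active mesh $\mcT_{h,i}$ covering a neighborhood of $\Omega_0$ (recall $U_{\delta_0}(\Omega_0) \subset \Omega$ and $\Omega_0$ sits well inside the bulk active region, away from $\partial\Omega$), a standard trace inequality on the union of elements meeting $\Omega_0$ gives $\|\pi_h v\|^2_{\Omega_0} \lesssim \|\pi_h v\|^2_{H^1(\mcT_{h,i})}$, and then the approximation bound lets us pass back to $\|v\|^2_{H^1(\mcK_{h,i})} + h^{-1}\|[v]\|^2_{\mcF_{h,i}}$ after one more inverse estimate to go from integrals over the full elements to integrals over the cut pieces — here I would need to be slightly careful and instead keep the norm over full elements bounded by the cut-element norm using that each element meeting $\Omega_0$ has bounded diameter and $\|v\|_{H^1}$ on a full element is comparable to $\|v\|_{H^1}$ on its cut piece only when the cut is large, which is not guaranteed; the cleaner route is to bound $\|\pi_h v\|^2_{H^1(\mcT_{h,i})}$ directly by $\|v\|^2_{H^1(\mcT_{h,i})}$ using $H^1$-stability of $\pi_h$ and then convert the right-hand side via Lemma~\ref{lem:macro-control} with $m=0,1$, which precisely trades full-element norms for cut-element norms plus macro stabilization — but since the macro stabilization is already being reintroduced in the second inequality, to keep the first inequality clean I would instead invoke Lemma~\ref{lem:controlstab} at the very end. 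For the second term, an element-wise trace inequality on those elements cut by $\Omega_0$ gives $\|v-\pi_h v\|^2_{\Omega_0} \lesssim h^{-1}\|v-\pi_h v\|^2_{\mcT_{h,i}} + h\|\nabla(v-\pi_h v)\|^2_{\mcT_{h,i}} \lesssim \|[v]\|^2_{\mcF_{h,i}} \lesssim h^{-1}\|[v]\|^2_{\mcF_{h,i}}$ (using $h \le h_0$), which is of the required form.

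The main obstacle I anticipate is the passage from norms over full background elements to norms over the cut intersections $\mcK_{h,i}$ — a generic cut element may have an arbitrarily small intersection with $\Omega_i$, so a naive element-wise conversion loses powers of $h$. The resolution is exactly the macro-element philosophy already developed: group the elements into macro elements and apply Lemma~\ref{lem:macro-control}, which is why the clean statement of Lemma~\ref{lem:discrete-trace} carries the $\mcK_{h,i}$-norm on the right rather than a full-element norm, and why the second displayed inequality in the lemma — obtained by feeding in Lemma~\ref{lem:controlstab} — is the genuinely useful form. A secondary technical point is that $\pi_h v$ must be continuous on a full neighborhood of $\Omega_0$ so that the standard trace inequality applies without boundary complications; this is guaranteed by the hypothesis $U_{\delta_0}(\Omega_0)\subset\Omega$ together with the fact that all background elements meeting $\Omega_0$ lie strictly inside $\Omega$, so the nodal averaging operator is well defined on all of them and we never see the Dirichlet boundary $\partial\Omega$ near $\Omega_0$.
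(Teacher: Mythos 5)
Your overall strategy is the paper's: approximate $v$ by a continuous piecewise linear function obtained by nodal averaging (the Oswald interpolant $O_{h,i}$), control the deviation from continuity by $\| v - O_{h,i} v \|^2_{\mcT_{h,i}} \lesssim h \| [v] \|^2_{\mcF_{h,i}}$, use a genuine trace inequality for the continuous part and an element-wise inverse trace inequality for the difference; and the second displayed inequality is indeed just the first combined with Lemma \ref{lem:controlstab}, exactly as you say.

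The one place where your execution has a genuine gap is the trace of the continuous part. A ``standard trace inequality on the union of elements meeting $\Omega_0$'' does not hold with an $h$-uniform constant: that union is an $O(h)$-thin strip, and a trace onto $\Omega_0$ from such a strip costs $h^{-1}\|w\|^2 + h\|\nabla w\|^2$. Your fallback --- bound $\| \pi_h v \|^2_{H^1(\mcT_{h,i})}$ by $\| v \|^2_{H^1(\mcT_{h,i})}$ via $H^1$-stability and then convert full-element norms to cut-element norms with Lemma \ref{lem:macro-control} --- does work, but it reintroduces the macro stabilization terms, so it proves only the second displayed inequality and not the first one as stated: the $k=1$ contributions $h\| [\nabla v] \|^2_{\mcF_{h,i}(M)}$ are not dominated by $h^{-1}\| [v] \|^2_{\mcF_{h,i}}$ (a kinked continuous function has zero function jumps but nonzero gradient jumps). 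The paper's resolution is simpler: apply the standard trace inequality on the fixed Lipschitz domain $\Omega_i$ itself, $\| w \|_{\Omega_0} \lesssim \| w \|_{H^1(\Omega_i)}$ with $w = O_{h,i} v$, which is legitimate since $\Omega_0 \subset \partial \Omega_i$ and $O_{h,i}v \in H^1$ there. A triangle inequality then gives $\| O_{h,i} v \|^2_{H^1(\Omega_i)} \lesssim \| O_{h,i} v - v \|^2_{H^1(\mcT_{h,i})} + \| v \|^2_{H^1(\mcK_{h,i})}$, so the full-element norm only ever hits the jump-controlled difference (which satisfies $\| O_{h,i} v - v \|^2_{H^1(\mcT_{h,i})} \lesssim h^{-2} \| O_{h,i} v - v \|^2_{\mcT_{h,i}} \lesssim h^{-1} \| [v] \|^2_{\mcF_{h,i}}$ by an inverse inequality), while $v$ itself appears only in the cut-element norm because $\Omega_i$ intersected with the active mesh is exactly $\mcK_{h,i}$. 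With that modification no macro-element machinery is needed until the final line, and the first inequality is obtained exactly as stated.
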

\begin{proof} Let us consider $i=1$ and let 
$O_{h,1}: W_{h,1} \rightarrow W_{h,1} \cap C(\Omega_{h,1})$ be the Oswald interpolant with nodal values 
given by the average of the nodal values of the discontinuous function $v$, more precisely for a node 
$x_i$ we define
\begin{equation}
O_{h,1} v|_{x_i} = \frac{1}{| \mcT_{h,1}(x_i)|} \sum_{T \in \mcT_{h,1}(x_i)} v|_T (x_i)  
\end{equation}
where $\mcT_h(x_i)$ is the set of elements in $\mcT_{h,1}$ sharing node $x_i$. We then have the well known estimate 
\begin{align}
\| v - O_{h,1} v \|^2_{\mcT_{h,1}} \lesssim h \|[v ] \|^2_{\mcF_{h,1}}
\end{align}
see \cite{BuEr07}. Adding and subtracting the Oswald interpolant and using the inverse trace inequality 
$\| w \|^2_{\partial \omega \cap T} \lesssim h^{-1}\| w \|^2_T$  for the difference 
  $  v - O_{h,1} v $ and the standard trace inequality  $\| w \|_{\Omega_0 } \lesssim \| w \|_{H^1(\Omega_1)}$, $w \in H^1(\Omega_1)$,  
  with $w = O_{h,1} v$  we get 
\begin{align}
\| v \|^2_{\Omega_0} &\lesssim \| v - O_{h,1} v \|^2_{\Omega_1}  
+ \| O_{h,1} v \|^2_{\Omega_1}  
\\
&\lesssim 
h^{-1} \| v - O_{h,1} v\|^2_{\mcT_{h,1}(\Omega_0)}  
+ \| O_{h,1} v  \|^2_{H^1(\Omega_1)}  
\\
&\lesssim 
h^{-1} \| v - O_{h,1} v\|^2_{\mcT_{h,1}(\Omega_0)}  
+ \| O_{h,1} v  - v \|^2_{H^1(\mcT_{h,1})}  
+ \| v \|^2_{H^1(\mcT_{h,1}\cap \Omega_1)}  
\\
&\lesssim  (1 + h^{-1} + h^{-2} )\| v - O_{h,1} v \|^2_{\mcT_{h,1}}  + \|  v \|^2_{H^1(\mcK_{h,1})}  
\\
&\lesssim h^{-1} \| [ v ] \|^2_{\mcF_{h,1}} + \|  v \|^2_{H^1(\mcK_{h,i})}  
\\
&\lesssim \| v \|^2_{H^1(\mcK_{h,i})} +  h^{-1} \| [ v ] \|^2_{\mcE_{h,i}} 
+\sum_{M \in \mcM_{h,i}} \sum_{k=0}^1 h^{2k-1} \| [\nabla^k v ] \|^2_{\mcF_{h,i}(M)}
\end{align}
for all $h \in (0,h_0]$, and we used Lemma \ref{lem:controlstab} in the last inequality.
\end{proof}

\subsection{Derivation of the Method and Consistency}
\label{sec:derivation}
To derive the method we test the exact bulk equations (\ref{eq:BVP}) by $\tkapi v_i$ with $v_i \in W_{h,i}$, $i=1,2,$ and 
integrate element wise using Green's formula. Using the assumptions on the velocity field $\beta$ we obtain
\begin{align}
&\sum_{i=1}^2 (f_i,\tkapi v_i)_{\Omega_i} 
=
\sum_{i=1}^2 -(\nabla \cdot (A_i \nabla u_i), \tkapi v_i)_{\underbrace{\mcT_{h,i} \cap \Omega_i}_{=\mcK_{h,i}}}
+(\nabla \cdot (\beta u_i),\tkapi v_i)_{\underbrace{\mcT_{h,i} \cap \Omega_i}_{=\mcK_{h,i}}}
\\
&
=
\sum_{i=1}^2\Big( ( A_i \nabla u_i,\nabla   \tkapi  v_i)_{\mcK_{h,i}}-(\nu_i \cdot A_i \nabla u_i,    \tkapi  v_i)_{\partial \mcK_{h,i}} \Big)
\\
&\qquad 
+\frac{1}{2} \Big( (\beta \cdot \nabla u_i, \tkapi v_i)_{\mcK_{h,i}} - (u_i,\beta\cdot \nabla  ( \tkapi  v_i ) )_{\mcK_{h,i}} + ((\nu_i \cdot \beta) u_i, \tkapi v_i)_{\partial \mcK_{h,i}} \Big) 
\\
&=
\sum_{i=1}^2 \tkapi \Big( ( A_i \nabla u_i,\nabla v_i)_{\mcK_{h,i}}  - (\langle \nu_i \cdot A_i \nabla u_i \rangle ,[v_i])_{\mcE_{h,i}} \Big)
\\
&\qquad 
 -
 \sum_{i=1}^2  \underbrace{(n_i \cdot A_i \nabla u_i, \tkapi v_i - v_0)_{\partial \Omega_i \cap \Omega_0}}_{=-\kappa_{0,i}^{-1}([\kappa u]_i, [\kappa v]_i)_{\partial \Omega_i \cap \Omega_0}}
- \underbrace{\sum_{i=1}^2 (n_i \cdot A_i \nabla u_i, v_0)_{= \partial \Omega_i \cap \Omega_0}}_{=(\llbracket n\cdot A \nabla u \rrbracket, v_0)_{\Omega_0} }
\\
&\qquad 
+
\sum_{i=1}^2 \tkapi \frac{1}{2}\Big( (\beta \cdot \nabla u_i,v_i)_{\mcK_{h,i}} -(u_i,\beta \cdot \nabla v_i)_{\mcK_{h,i}} +((\nu_i \cdot \beta)\langle u_i \rangle,[v_i])_{\mcE_{h,i}} \Big)
\end{align}
Since the components $v_i$ of $v$ are discontinuous across the internal faces in $\mcT_{h,i}$, the partial integration manufactures certain jump terms. By adding terms 
that are zero we can make the diffusion dependent parts symmetric and the convection dependent parts skew-symmetric. 
\begin{align}
&\sum_{i=1}^2 (f_i,\tkapi v_i)_{\Omega_i} 
 =\sum_{i=1}^2 \tkapi\Big( ( A_i \nabla u_i, \nabla v_i)_{\mcK_{h,i}}  
 -(\langle \nu_i \cdot A_i \nabla u_i \rangle ,[v_i])_{\mcE_{h,i}}  
 \\
&\qquad \qquad \qquad \qquad \qquad
 - \underbrace{([u_i],\langle \nu_i \cdot A_i \nabla v_i \rangle )_{\mcE_{h,i}}}_{={0}} 
 + \underbrace{(\lambda_{a_i} h^{-1} [u_i], [v_i])_{\mcE_{h,i}}}_{={0}} 
 \Big)
 \\
&\qquad 
+\sum_{i=1}^2\tkapi \frac{1}{2}\Big( (\beta \cdot \nabla u_i,v_i)_{\mcK_{h,i}} -(u_i,\beta \cdot \nabla v_i)_{\mcK_{h,i}} 
\\
&\qquad \qquad \qquad
+ ((\nu_i \cdot \beta)\langle u_i \rangle  ,[v_i])_{\mcE_{h,i}}  - \underbrace{((\nu_i \cdot \beta)[u_i],\langle v_i \rangle )_{\mcE_{h,i}}}_{={0}}\Big)+ \tkapi \underbrace{(\lambda_{b_i} [u_i], [v_i])_{\mcE_{h,i}}}_{={0}} 
\\
&\qquad
+\sum_{i=1}^2 ( \kappa_{0,i}^{-1} [\kappa u]_i,[\kappa v]_i)_{\Omega_0}
- (\llbracket n \cdot A \nabla v\rrbracket,v_0)_{\Omega_0}
\\
&=
\sum_{i=1}^2 \tkapi a_{h,i}(u_{i},v_i)  + \tkapi b_{h,i}(u_{i},v_i)  
+  (\kappa_{0,i}^{-1} [\kappa u]_i,[\kappa v]_i)_{\Omega_0}
\underbrace{-(\llbracket n \cdot A \nabla v\rrbracket,v_0)_{\Omega_0}}_{\bigstar}
\end{align}
Next using partial integration on the surface $\Omega_0$ we obtain
\begin{align}
\bigstar &= -(f_0,v_0)_{\Omega_0} - (\nabla_0 (\cdot A_0 \nabla_0 u_0), v_0)_{\Omega_0}
+ (\nabla_0 \cdot (\beta u_0),v_0)_{\Omega_0} 
\\
&= -(f_0,v_0)_{\Omega_0}  + (A_0 \nabla_0 u_0, \nabla_0 v_0)_{\mcK_{h,0}} 
- (\nu_0 \cdot A_0 \nabla_0 u_0, v_0)_{\partial \mcK_{h,0}}  
\\
&\qquad 
+\frac{1}{2}\left(( \beta \cdot \nabla_0 u_0,v_0)_{\mcK_{h,0}} - (u_0,\beta \cdot \nabla_0 v_0)_{\mcK_{h,0}} +((\nu_0 \cdot \beta) u_0, v_0)_{\partial \mcK_{h,0}}\right)
\\
&=-(f_0,v_0)_{\Omega_0}  + (A_0 \nabla_0 u_0, \nabla_0 v_0)_{\mcK_{h,0}} - (\langle \nu_0 \cdot A_0 \nabla_0 u_0 \rangle, [v_0])_{\mcE_{h,0}}
\\
&\qquad 
   - \underbrace{([u_0], \langle \nu_0 \cdot A_0 \nabla_0 v_0 \rangle )_{\mcE_{h,0}}}_{=0} 
    +\underbrace{(\lambda_{a_0}h^{-1} [u_0] ,  [v_0]  )_{\mcE_{h,0}}}_{=0} 
\\
&\qquad 
+\frac{1}{2}\Big(( \beta \cdot \nabla_0 u_0,v_0)_{\mcK_{h,0}} -( u_0,\beta \cdot \nabla_0 v_0)_{\mcK_{h,0}} \Big)
\\
&\qquad 
+
\frac{1}{2}\Big( ((\nu_0 \cdot \beta) \langle u_0 \rangle, [v_0])_{\mcE_{h,0}} 
- \underbrace{((\nu_0 \cdot \beta)[u_0], \langle v_0 \rangle )_{\mcE_{h,0}}}_{=0} 
\Big)
  +  \underbrace{(\lambda_{b_0}[u_0] ,  [v_0]  )_{\mcE_{h,0}}}_{=0} 
   \\
   &= - (f_0,v_0)_{\Omega_0} +  a_{h,0}(u_0,v_0)+ b_{h,0}(u_0,v_0)
\end{align}
We finally add the stabilization terms defined by (\ref{eq:stab-surf-glob}) and (\ref{eq:stab-bulk-glob}) to 
the formulation. The above derivation, together with the fact that the stabilization terms vanish on the exact solution, prove the next lemma, which states that the proposed discontinuous cut finite element method is consistent. 
\begin{lem}
The solution $u\in W \cap \bigoplus_{i=0}^2 H^2(\Omega_i)$ to the convection diffusion problem \eqref{eq:BVP}-\eqref{eq:BVP-d} satisfies
\begin{equation}\label{eq:consistency}
  A_h(u,v)=L_h(v) \qquad \forall v \in W_h
\end{equation}
\end{lem}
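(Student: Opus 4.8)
The statement is exactly what the integration-by-parts computation of Section~\ref{sec:derivation} establishes, so the plan is to retrace that computation with a general discrete test function $v=(v_0,v_1,v_2)\in W_h$ and to check at each step that the manufactured jump and penalty terms, and the stabilization forms, all vanish on the exact solution. The only properties of $u$ needed are regularity identities: since $u_i\in H^2(\Omega_i)$ and, in the cut region near $\Omega_0$, $u_i$ admits a smooth extension across $\Omega_0$ (and $u_0$ is understood as extended constantly along the normal of $\Omega_0$ by the closest point map $p$), the components and co-normal fluxes of $u$ are single-valued across internal faces, i.e.
\[
[u_i]=0,\qquad [\nabla^k u_i]=0\ \ (k=0,1)\ \text{on}\ \mcF_{h,i},\qquad \nabla_{n_0}u_0\equiv 0\ \text{on}\ \mcK_{h,0}.
\]
No inequalities enter; by this regularity every boundary and face trace below is well defined.

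For the bulk equations I would multiply~\eqref{eq:BVP} by $\tkapi v_i$, $i=1,2$, and integrate elementwise over $\mcK_{h,i}$ with Green's formula, producing the volume forms $a_i$ and $b_i$ on $\mcK_{h,i}$ and a boundary term on $\partial\mcK_{h,i}$. This boundary splits into three parts. The part on $\partial\Omega$ drops because $v_1=0$ there. On the internal faces $\mcE_{h,i}$ the flux contributions of the two neighbouring elements combine via the jump--average identity~\eqref{eq:jumpaverrel} and the continuity of $u_i$ and of $\nu_i\cdot A_i\nabla u_i$, leaving $-(\langle\nu_i\cdot A_i\nabla u_i\rangle,[v_i])_{\mcE_{h,i}}$; the convective face term likewise reduces to $\frac{1}{2}((\nu_i\cdot\beta)\langle u_i\rangle,[v_i])_{\mcE_{h,i}}$ since $[u_i]=0$. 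On $\Omega_0$ I substitute the interface condition~\eqref{eq:BVP-c}, $-n_i\cdot A_i\nabla u_i=[\kappa u]_i$, and use $\tkapi v_i-v_0=\kappa_{0,i}^{-1}[\kappa v]_i$ to produce $(\kappa_{0,i}^{-1}[\kappa u]_i,[\kappa v]_i)_{\Omega_0}$ plus the leftover $-(\llbracket n\cdot A\nabla u\rrbracket,v_0)_{\Omega_0}$, the convective contribution on $\Omega_0$ vanishing because $n_i\cdot\beta=0$ on $\partial\Omega_i\supset\Omega_0$. Adding $([u_i],\langle\nu_i\cdot A_i\nabla v_i\rangle)_{\mcE_{h,i}}$, $(\lambda_{a_i}h^{-1}[u_i],[v_i])_{\mcE_{h,i}}$, $\frac{1}{2}((\nu_i\cdot\beta)[u_i],\langle v_i\rangle)_{\mcE_{h,i}}$ and $(\lambda_{b_i}[u_i],[v_i])_{\mcE_{h,i}}$ --- each zero because $[u_i]=0$ --- assembles precisely $\tkapi\Big(a_{h,i}(u_i,v_i)+b_{h,i}(u_i,v_i)\Big)$.

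Next I would remove the leftover interface term using the surface equation~\eqref{eq:BVP-b}: writing $(\llbracket n\cdot A\nabla u\rrbracket,v_0)_{\Omega_0}=(f_0,v_0)_{\Omega_0}+(\nabla_0\cdot(A_0\nabla_0 u_0),v_0)_{\Omega_0}-(\nabla_0\cdot(\beta u_0),v_0)_{\Omega_0}$ and integrating by parts elementwise over the surface pieces $\mcK_{h,0}$, now with $\nu_0$ the exterior co-normal, repeats the bulk argument verbatim on $\Omega_0$: the internal co-normal fluxes combine into $\mcE_{h,0}$ jump terms, the terms $([u_0],\langle\nu_0\cdot A_0\nabla_0 v_0\rangle)_{\mcE_{h,0}}$, $(\lambda_{a_0}h^{-1}[u_0],[v_0])_{\mcE_{h,0}}$, $\frac{1}{2}((\nu_0\cdot\beta)[u_0],\langle v_0\rangle)_{\mcE_{h,0}}$ and $(\lambda_{b_0}[u_0],[v_0])_{\mcE_{h,0}}$ vanish since $[u_0]=0$, and there is no outer boundary contribution because $\Omega_0$ is closed. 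This yields $a_{h,0}(u_0,v_0)+b_{h,0}(u_0,v_0)-(f_0,v_0)_{\Omega_0}$, the last term joining $\sum_{i=1}^2(f_i,\tkapi v_i)_{\Omega_i}$ to form $L_h(v)$. Finally the stabilization forms vanish on $u$: for $i=1,2$ because $[\nabla^k u_i]=0$ on $\mcF^*_{h,i}\subset\mcF_{h,i}$ for $k=0,1$, and for $i=0$ additionally because the normal-gradient term vanishes thanks to $\nabla_{n_0}u_0\equiv 0$. Collecting everything gives $A_h(u,v)=\sum_{i=0}^2(f_i,\tkapi v_i)_{\Omega_i}=L_h(v)$.

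I expect no analytical difficulty: the proof is bookkeeping of boundary and face terms together with the observation that every added symmetrising or penalty term carries a factor $[u_i]$ and hence drops. The one point that deserves care is to be explicit about the extension convention under which the face jumps $[\nabla^k u_i]$ on faces of the background mesh and the normal derivative $\nabla_{n_0}u_0$ are defined, and to make sure that the terms added to reassemble $a_{h,i}$ and $b_{h,i}$ are exactly those proportional to $[u_i]$, so that nothing is silently discarded.
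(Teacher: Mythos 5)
Your proposal is correct and follows essentially the same route as the paper: the consistency lemma is proved there by exactly the elementwise Green's formula computation of Section~\ref{sec:derivation}, with the symmetrizing and penalty terms added as zero multiples of $[u_i]$ and the stabilization forms vanishing on the (extended) exact solution. Your explicit remark about the extension convention needed for $[\nabla^k u_i]$ on full background faces and for $\nabla_{n_0}u_0$ is a point the paper leaves implicit, and is handled correctly by the Stein extension and the closest-point extension introduced later in the paper.
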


\subsection{The Local Conservation Property}

We shall now establish a conservation property for the so called Nitsche flux on the macro elements. Since we do not have any additional 
stabilization at the interfaces between the macro elements the natural local conservation property inherent in the discontinuous Galerkin 
formulation is preserved on the macro element level. Consider a macro element $M \in \mcM_{h,i}$. Testing with the characteristic function 
$\chi_M$ associated with the macro element $M$, yields  
\begin{align}
L_h(\chi_M) = A_h(u_h,\chi_M)
\end{align}
where 
\begin{align}
L_h(\chi_M) = (\tkapi f_i,1)_{M \cap \Omega_i }
\end{align}
and using the fact that $s_{h,i}(u_h,\chi_M) = 0$ since there is no stabilization across the macro element boundaries we get 
\begin{align}
  A_h(u_h,\chi_M) &= \sum_{i=0}^2 a_{h,i}(u_{h,i},\chi_M) + b_{h,i}(u_{h,i},\chi_M) + \sum_{i=1}^2 ([\kappa u_h]_i, [\chi_M]_i)_{M \cap \Omega_0}
\end{align}
with 
\begin{align}
a_{h,i}(u_{h,i},\chi_M)= -(\langle \nu_i \cdot A_i \nabla_i u_{h,i} \rangle ,\chi_M )_{\partial M \cap \mcE_{h,i}} 
+ (\lambda_{a_i} h^{-1} [u_{h,i}], \chi_M)_{\partial M \cap \mcE_{h,i} }
\end{align}
and 
\begin{align}
& b_{h,i}(u_{h,i},\chi_M)= \frac{1}{2}
 \left( (\beta_i \cdot \nabla_i u_{h,i},\chi_M)_{M \cap \mcK_{h,i}}
 -((\nu_i \cdot \beta_i)[u_{h,i}],\langle \chi_M \rangle)_{\overline{M} \cap \mcE_{h,i}} \right)
\\
&\qquad \qquad \qquad +\frac{1}{2}((\nu_i \cdot \beta_i)\langle u_{h,i} \rangle  ,\chi_M )_{\partial M \cap \mcE_{h,i}} 
+(\lambda_{b_i} [u_{h,i}], \chi_M )_{\partial M \cap \mcE_{h,i}}
\\
&= ((\nu_i \cdot \beta_i)\langle u_{h,i} \rangle  , \chi_M )_{\partial M \cap \mcE_{h,i}} - \frac12 ((\text{div}_i \beta_i) u_{h,i},\chi_M)_{M \cap \mcK_{h,i}}
+(\lambda_{b_i} [u_{h,i}], \chi_M )_{\partial M \cap \mcE_{h,i}}
\end{align}
where we used partial integration and relation \eqref{eq:jumpaverrel}. Introducing the discrete normal flux $\Sigma_n(u_{h,i})$ such that 
\begin{align}
\Sigma_n (u_{h,i}) =  \langle \nu_i \cdot A_i \nabla_i u_{h,i} \rangle - ( h^{-1} \lambda_{a_i} + \lambda_{b_i} )  [u_{h,i}]  - (\nu_i \cdot \beta_i)\langle u_{h,i} \rangle, 
\qquad \text{on $\partial M \cap \mcE_{h,i}$}
\end{align}
We get, for each macro element $M$ in one of the bulk macro meshes $\mcM_{h,i}$, the conservation law
\begin{align}
( \Sigma_n (u_{h,i} ), 1)_{\partial M \cap \Omega_i} 
+ ([\kappa u_h]_i,1)_{M \cap \Omega_0} 
+ \frac12 ((\text{div}_i \beta_i) u_{h,i},1)_{M \cap \Omega_i} + (f_i,1)_{M \cap \Omega_i } 
= 0
\end{align}
and for $M$ a macro element in the surface mesh $\mcM_{h,0}$ we get the same expression except for the sign of the second term, which couples the bulk domains to the interface 
\begin{align}
&( \Sigma_n (u_{h,0} ), 1)_{\partial M \cap \Omega_0} 
- \sum_{i=1}^2 ([\kappa u_h]_i,1)_{M \cap \Omega_0} 
\\
&\qquad 
+ \frac12 ((\text{div}_0 \beta_0) u_{h,0},1)_{M \cap \Omega_i} + (f_0,1)_{M \cap \Omega_i } 
= 0
\end{align}

\section{Analysis of the Method}

\subsection{Properties of the Forms}

We establish the basic properties of the form $A_h$ including a discrete Poincar\'e inequality, coercivity, and continuity.   In order to identify the proper local scaling of the different terms in the method we will take the variable coefficients $A_i$ into account in the proof of coercivity.  However, to keep the overall presentation as simple as possible we elsewhere work with the global bounds on parameters $A_i$. We start by defining the norms
\begin{align} \label{eq:dg-norm}
\tn v_i \tn_{h,i}^2 
&=\| \nabla_i v_i \|^2_{A_i, \mcK_{h,i}} + h\| \langle \nabla_i v_i \rangle \|^2_{A_i, \mcE_{h,i}}  + h^{-1}\|\nu_i [v_i]\|^2_{A_i,\mcE_{h,i}} + \| v_i \|^2_{s_{h,i}}
\end{align}
and 
\begin{align}
\tn v_i \tn_{h,i,\bigstar}^2 
&=  \tn v_i \tn_{h,i}^2  + \| v_i \|^2_{\mcK_{h,i}} + h\| \langle v_i \rangle \|^2_{\mcE_{h,i}}
\end{align}
for $v_i\in H^s(\mcT_{h,i}) +  W_{h,i}$, $s>3/2$, and
\begin{align} \label{eq:dg-norm-sum}
\tn v \tn_{h}^2&=\sum_{i=0}^2 \tn v_i \tn_{h,i}^2 + \sum_{i=1}^2 \kappa_{0,i}^{-1} \| [\kappa v ]_i \|^2_{\Omega_0}
\\  \label{eq:dg-norm-star}
\tn v \tn_{h,\bigstar}^2&=\sum_{i=0}^2 \tn v_i \tn_{h,i,\bigstar}^2 + \sum_{i=1}^2 \kappa_{0,i}^{-1} \| [\kappa v ]_i \|^2_{\Omega_0}
\end{align}
Here $\| w \|^2_{A,\omega} = ( A w, w)_\omega$ is the $A$ weighted $L^2$ norm of $w$ over $\omega$. We now turn to the discrete Poincar\'e estimate.


\begin{lem}\label{lem:poincare-discrete} There is a constant such that for all $v \in W_h$, 
\begin{equation}\label{eq:poincare-discrete}
\sum_{i=0}^2 h^{-(d - d_i)} \| v_i \|^2_{\mcT_{h,i}} \lesssim \tn v \tn_h^2
\end{equation}
\end{lem}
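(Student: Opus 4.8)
The plan is to mimic the continuous Poincaré argument (the proof of \eqref{eq:poincare-cont}) at the discrete level, using a duality trick with the Laplacian, and to pay for the discontinuity of the discrete functions with the macro element stabilization and the Nitsche penalty terms. First I would treat the bulk components $v_1, v_2$. Let $\phi$ solve $-\Delta \phi = \psi$ in $\Omega$ with $\phi = 0$ on $\partial \Omega$, for $\psi \in L^2(\Omega)$; by elliptic regularity $\|\phi\|_{H^2(\Omega_i)} \lesssim \|\psi\|_\Omega$. Multiply by $\sum_{i=1}^2 \tkapi v_i \chi_i$ and integrate by parts element-wise over $\mcK_{h,i}$. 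Compared with the continuous computation, partial integration now produces, in addition to the interface term $(\kappa_{0,i}^{-1}[\kappa v]_i, \nabla_{n_i}\phi)_{\Omega_0}$, a sum of jump terms $([v_i], \langle \nu_i \cdot \nabla \phi\rangle)_{\mcE_{h,i}}$ over the interior faces (the normal derivative of $\phi$ is single-valued since $\phi \in H^2$, so the jump sits entirely on $[v_i]$). I would bound the interface term exactly as in the continuous proof using the discrete trace inequality of Lemma~\ref{lem:discrete-trace} applied to $\phi$ in the form $\|\nabla \phi\|_{\Omega_0} \lesssim \|\phi\|_{H^2(\Omega_i)}$ (this time the standard trace inequality suffices since $\phi$ is $H^2$, so no stabilization is needed on that term), and the new jump term by Cauchy–Schwarz as $h^{-1/2}\|[v_i]\|_{\mcE_{h,i}} \cdot h^{1/2}\|\langle \nabla \phi\rangle\|_{\mcE_{h,i}}$, where the second factor is controlled by $\|\phi\|_{H^2(\Omega_i)} \lesssim \|\psi\|_\Omega$ via a standard trace inequality and the first factor is part of $\tn v\tn_h$ through the Nitsche penalty in $a_{h,i}$. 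Collecting, $\sum_{i=1}^2 \tkapi (v_i,\psi)_{\Omega_i} \lesssim \tn v\tn_h \|\psi\|_\Omega$, and choosing $\psi = \sum_{i=1}^2 v_i\chi_i$ gives $\sum_{i=1}^2 \tkapi \|v_i\|^2_{\mcK_{h,i}} \lesssim \tn v\tn_h^2$; the bulk terms $\|\nabla v_i\|^2_{\mcK_{h,i}}$, the penalty, and the stabilization are all directly inside $\tn v\tn_h^2$, so in fact $\|v_i\|^2_{H^1(\mcK_{h,i})} \lesssim \tn v\tn_h^2$.

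Having controlled the $\Omega_i$-part, I next pass from $\|v_i\|^2_{\mcK_{h,i}}$ to the full mesh norm $h^{-(d-d_i)}\|v_i\|^2_{\mcT_{h,i}}$ over the active mesh, i.e. I must recover mass on the thin sliver elements $T\cap\Omega_i$ with small intersection. This is exactly what Lemma~\ref{lem:macro-control} delivers (with $m=0$): summing \eqref{eq:stab-est} over $M\in\mcM_{h,i}$ gives, for $i=1,2$,
\begin{equation*}
\|v_i\|^2_{\mcT_{h,i}} \;=\; \sum_{M\in\mcM_{h,i}} \|v_i\|^2_M \;\lesssim\; \|v_i\|^2_{\mcK_{h,i}} + h^2 \sum_{M\in\mcM_{h,i}}\sum_{k=0}^1 h^{2k-1}\|[\nabla^k v_i]\|^2_{\mcF_{h,i}(M)} \;\lesssim\; \tn v_i\tn_{h,i}^2,
\end{equation*}
and since $d-d_i = 0$ for $i=1,2$ the prefactor $h^{-(d-d_i)}=1$ is harmless. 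For the surface component $i=0$, Lemma~\ref{lem:macro-control} gives $\|v_0\|^2_M \lesssim h\|v_0\|^2_{M\cap\Omega_0} + h\bigl(\cdots\bigr)$, and the stabilization terms there carry exactly the weights appearing in $s_{h,0}$ (including the normal-derivative term $\tau_{0,2}h^2\|\nabla_n v_0\|^2_{\mcK_{h,0}}$), so summing over $M$ yields $h^{-1}\|v_0\|^2_{\mcT_{h,0}} \lesssim \|v_0\|^2_{\mcK_{h,0}} + \|v_0\|^2_{s_{h,0}}$, i.e. $h^{-(d-d_0)}\|v_0\|^2_{\mcT_{h,0}} \lesssim \|v_0\|^2_{\mcK_{h,0}} + \tn v_0\tn_{h,0}^2$.

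It remains to bound $\|v_0\|^2_{\mcK_{h,0}}$ itself, which is not directly in $\tn v\tn_h^2$. Here I follow the final step of the continuous Poincaré proof: using the coupling term, write $\kappa_{0,2}\|v_0\|_{\mcK_{h,0}} \lesssim \|[\kappa v]_2\|_{\Omega_0} + \kappa_2\|v_2\|_{\Omega_0}$, where $\|[\kappa v]_2\|_{\Omega_0}$ is part of $\tn v\tn_h$ and $\|v_2\|_{\Omega_0} \lesssim \|v_2\|_{H^1(\mcK_{h,2})} + h^{-1/2}\|[v_2]\|_{\mcF_{h,2}}$ is controlled by the discrete trace inequality of Lemma~\ref{lem:discrete-trace} together with the already-established bound $\|v_2\|^2_{H^1(\mcK_{h,2})}\lesssim \tn v\tn_h^2$ and the observation that the right-hand side of \eqref{eq:trace-discont} is dominated by $\tn v\tn_h^2$ (the Nitsche penalty controls $h^{-1}\|[v_2]\|^2_{\mcE_{h,2}}$ and $s_{h,2}$ controls the macro-element jump terms). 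Combining the three pieces gives \eqref{eq:poincare-discrete}. The main obstacle is bookkeeping of the $h$-weights for the surface term: one must check that the $h$-scaling built into $s_{h,0}$ in \eqref{eq:stab-surface}–\eqref{eq:stab-surf-glob} exactly matches what Lemma~\ref{lem:macro-control} produces when relating $\|v_0\|^2_M$ to $\|v_0\|^2_{M\cap\Omega_0}$ and the normal-derivative stabilization — this is the same delicate scaling the authors flag in the remark following \eqref{eq:stab-glob} — but no genuinely new estimate beyond Lemmas~\ref{lem:macro-control} and~\ref{lem:discrete-trace} is needed.
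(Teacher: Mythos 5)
Your proposal is correct and follows essentially the same route as the paper's proof: Lemma \ref{lem:macro-control} (with $m=0$) to pass from the active meshes $\mcT_{h,i}$ to the cut parts $\mcK_{h,i}$ at the price of the stabilization terms, a duality argument with $-\Delta\phi=\psi$ and element-wise integration by parts (paying for the face jumps with the Nitsche penalty) to bound the bulk $L^2$ norms, and the coupling term $[\kappa v]_2$ together with the discrete trace inequality of Lemma \ref{lem:discrete-trace} to recover the surface component. The only difference is the order in which these three steps are assembled, which is immaterial.
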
 
\begin{proof} The proof is similar to the proof of the continuous Poincar\'e inequality (\ref{eq:poincare-cont}) but a bit more complicated 
since we work with discontinuous piecewise polynomials. First we use Lemma 
\ref{lem:macro-control}, with $m=0$, and the fact $h \leq h_0 \lesssim 1$ to conclude 
that 
\begin{align}
\|v_i \|_M^2 &\lesssim 
  \| v_i\|^2_{M \cap \Omega_i} 
+ h^{2} \Big( \sum_{k=0}^1 h^{2k-1} \| [\nabla^k v_i ] \|^2_{\mcF_{h,i}(M)}  \Big)
\\
&
\lesssim   \| v_i \|^2_{M \cap \Omega_i}  + \| v_i \|^2_{s_{h,i,M}} , \qquad i=1,2
\end{align}
for the bulk domains $\Omega_i$, $i=1,2,$ and for the surface domain $\Omega_0$,
\begin{align}
h^{-1} \|v_0 \|_M^2 &\lesssim 
\| v_0 \|^2_{M \cap \Omega_0} 
+ h^{2} \Big( \sum_{k=0}^1 h^{2k-2} \| [\nabla^k v ] \|^2_{\mcF_{h,0}(M)} \Big) + h^2 \| \nabla_n v \|^2_{M\cap \Omega_0}  
\\
&\lesssim \| v_0 \|^2_{M \cap \Omega_0}  + \| v \|^2_{s_{h,0,M}}
\end{align}
We conclude that  
\begin{align}
\sum_{i=0}^2 h^{-(d-d_i)} \| v_i \|^2_{\mcT_{h,i}} \lesssim \sum_{i=0}^2 \| v_i \|^2_{\mcK_{h,i}} + \| v_i \|^2_{s_{h,i}}
\lesssim \underbrace{\| v_0 \|^2_{\mcK_{h,0}} }_{I} + \underbrace{\sum_{i=1}^2 \| v_i \|^2_{\mcK_{h,i}}}_{II}  + \tn v \tn^2_h
\end{align}
We now continue with estimates of terms $I$ and $II$.

\paragraph{Term $\bfI$.} Adding and subtracting suitable terms and using the 
discrete trace inequality in Lemma \ref{lem:discrete-trace}, with $i=2$, we get
\begin{align}
 \|v_0\|^2_{\Omega_0} &\lesssim \kappa_{2,0}^2 \|v_0\|^2_{\Omega_0}
\\
&\lesssim  \| \kappa_{2,0} v_0 - \kappa_2 v_2 \|^2_{\Omega_0} + 
\kappa_2^2 \|v_2 \|^2_{\Omega_0} 
\\
&\lesssim \| \kappa_{2,0} v_0 - \kappa_2 v_2 \|^2_{\Omega_0} + \| v_2 \|^2_{H^1(\mcK_{h,2})} 
+ \| v_2 \|^2_{s_{h,2}} +  h^{-1} \| [ v ] \|^2_{\mcE_{h,i}} 
\\ \label{eq:poin-b-I}
&\lesssim \underbrace{\| v_2 \|^2_{\Omega_2}}_{\leq II} + \tn v \tn^2_h
\end{align}
where we used the fact that $\kappa_{0,2}$ and $\kappa_2$ are positive constants. 
\paragraph{Term $\bfI\bfI$.} Let $\phi$ be the solution to the problem 
\begin{equation}
-\Delta \phi = \psi\quad \text{in $\Omega$}, \qquad \phi = 0 \quad \text{on $\partial \Omega$}
\end{equation}
where $\psi \in L^2(\Omega)$. (Note that this is not an interface problem.) Multiplying $-\Delta \phi = \psi$ by $v = \sum_{i=1}^2 \tkapi v_i \chi_i$, where $\chi_i$ is the characteristic function of $\Omega_i$, and integrating by parts we obtain
\begin{align}
&\sum_{i=1}^2 (\tkapi v_i,\psi)_{\Omega_i} = \sum_{i=1}^2 - (\tkapi v_i, \Delta \phi )_{\Omega_i} 
\\
&=  \sum_{i=1}^2 (\tkapi \nabla v_i,\nabla \phi )_{\mcK_{h,i}}  - (\tkapi v_i, \nabla_{\nu_i} \phi )_{\partial \mcK_{h,i}} 
\\
&=  \sum_{i=1}^2 (\tkapi \nabla v_i,\nabla \phi )_{\mcK_{h,i}} - (\tkapi [v], \nabla_{\nu_i} \phi )_{\mcE_{h,i}} 
- (\tkapi v_i - v_0, \nabla_{n_i} \phi )_{\Omega_0}  - ( v_0, \llbracket \nabla_{n_i} \phi \rrbracket )_{\Omega_0} 
\\
&=  \sum_{i=1}^2 (\tkapi \nabla v_i,\nabla \phi )_{\Omega_i}   - (\tkapi [v], \nabla_{\nu_i} \phi )_{\mcE_{h,i}}  
- (\kappa_{0,i}^{-1}[\kappa v]_i, \nabla_{n_i} \phi )_{\Omega_0}  
\\
&\leq  \Big(\sum_{i=1}^2 \tkapi \| \nabla v_i\|^2_{\Omega_i}  + \tkapi h^{-1} \|[v]\|^2_{\mcE_{h,i}}   + \kappa_{0,i}^{-1} \|[\kappa v]_i\|^2_{\Omega_0} \Big)^{1/2} 
\\
&\qquad \times 
\underbrace{\Big( \sum_{i=1}^2 \tkapi \|\nabla \phi \|^2_{\Omega_i}   + \tkapi h \|\nabla_{\nu_i} \phi \|^2_{\mcE_{h,i}}   + \kappa_{0,i}^{-1}\|\nabla_{n_i} \phi \|^2_{\Omega_0}  \Big)^{1/2}}_{\bigstar \lesssim \| \psi \|_\Omega}
\\ \
&\lesssim \tn v \tn_h \| \psi \|_\Omega
\end{align}
where we used the uniform coercivity of $A_i$ to pass to the $A_i$ weighted energy norm.
To establish the bound $\bigstar \lesssim \| \psi \|_\Omega$ we use a trace inequality on $\Omega_2$ followed by 
elliptic regularity, which holds since $\Omega$ is a convex polygonal domain, to conclude that 
\begin{equation}
 \|\nabla_{n_i} \phi \|_{\Omega_0} 
\leq \|\nabla \phi \|_{\Omega_0} 
\lesssim
 \| \phi \|_{H^2(\Omega_2)}
 \lesssim
 \| \phi \|_{H^2(\Omega)}
 \lesssim \| \psi \|_\Omega
\end{equation}
which holds for $i=1,2,$ since the gradient of $\phi \in H^1(\Omega)$. Then we may 
apply element wise trace inequalities followed by elliptic regularity to estimate the 
contributions from the faces 
\begin{align}
&h \|\nabla_{\nu_i} \phi \|^2_{\mcE_{h,i}}  
\lesssim \sum_{T \in \mcT_{h,i}}  h \|\nabla  \phi \|^2_{\partial T}
  \lesssim \sum_{T \in \mcT_{h,i}}  \|\nabla \phi \|^2_{T} + h^2 \|\nabla^2 \phi \|^2_{T} 
\\
&\qquad  \qquad  
  \lesssim \| \phi \|^2_{H^2(\mcT_{h,i})}
  \lesssim \| \phi \|^2_{H^2(\Omega)}
  \lesssim \| \psi \|^2_\Omega
\end{align}
and finally using the energy stability $\| \nabla \phi \|_{\Omega_i} \lesssim \| \psi \|_\Omega$ to 
conclude the estimate of $\bigstar$. We thus arrive at the estimate 
\begin{equation}
\sum_{i=1}^2 (\tkapi v_i,\psi)_{\Omega_i} \lesssim  \tn v \tn_h \| \psi \|_\Omega
\end{equation}
Setting $\psi_i = v_i$ and using the fact that $\tkapi>0$ we get 
\begin{equation}\label{eq:poin-a-II}
II = \sum_{i=1}^2 \|v_i\|^2_{\Omega_i} \lesssim  \tn v \tn^2_h 
\end{equation}

Together the bounds (\ref{eq:poin-b-I}) and (\ref{eq:poin-a-II}) of $I$ and $II$ complete the proof.
\end{proof}

\begin{lem} \label{lem:poincare-discrete-simple} There is a constant such that for all $v \in W_h$, 
\begin{equation}\label{eq:poincare-discrete-simple}
\tn v \tn_{h,\bigstar} \lesssim \tn v \tn_h
\end{equation}
\end{lem}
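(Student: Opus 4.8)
The plan is to observe that, directly from the definitions \eqref{eq:dg-norm} and \eqref{eq:dg-norm-star} of the norms,
\begin{equation*}
\tn v \tn_{h,\bigstar}^2 = \tn v \tn_h^2 + \sum_{i=0}^2 \Big( \| v_i \|^2_{\mcK_{h,i}} + h \| \langle v_i \rangle \|^2_{\mcE_{h,i}} \Big),
\end{equation*}
so it suffices to bound each of the six extra terms by $\tn v \tn_h^2$. The key input will be the discrete Poincar\'e inequality of Lemma~\ref{lem:poincare-discrete}, which already gives $\sum_{i=0}^2 h^{-(d-d_i)} \| v_i \|^2_{\mcT_{h,i}} \lesssim \tn v \tn_h^2$; all six terms will be reduced, by elementary inverse inequalities for piecewise linear polynomials and the standing assumption that the mesh resolves the smooth interface, to this right hand side.

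For the bulk volume terms ($i=1,2$) this is immediate: since $\mcK_{h,i}\subset\mcT_{h,i}$ and $d-d_i=0$ we have $\| v_i\|^2_{\mcK_{h,i}} \le \| v_i\|^2_{\mcT_{h,i}} \lesssim \tn v\tn_h^2$. For the surface volume term I would use that $|T\cap\Omega_0|\lesssim h^{d-1}$ for $T\in\mcT_{h,0}$, so the inverse estimate $\| w\|_{L^\infty(T)}\lesssim h^{-d/2}\| w\|_T$ for $w\in P_1(T)$ gives $\| v_0\|^2_{T\cap\Omega_0}\lesssim h^{d-1}h^{-d}\| v_0\|^2_T = h^{-1}\| v_0\|^2_T$; summing over $T$ and invoking Lemma~\ref{lem:poincare-discrete} with $d-d_0=1$ yields $\| v_0\|^2_{\mcK_{h,0}}\lesssim h^{-1}\| v_0\|^2_{\mcT_{h,0}}\lesssim \tn v\tn_h^2$. (This surface term could alternatively be routed through a cut trace inequality together with Lemma~\ref{lem:macro-control} with $m=1$, using the normal stabilization in $s_{h,0}$, but the cruder estimate above is enough.)

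For the face-average terms I write $\langle v_i\rangle = \theta_{F,1}v_1 + \theta_{F,2}v_2$ with $\theta_{F,j}\in[0,1]$, giving the pointwise bound $|\langle v_i\rangle|^2 \lesssim |v_1|^2 + |v_2|^2$, where $v_l$ is the restriction to the element $T_l$ sharing the face $F$. For $i=1,2$, $\| v_l\|^2_{F\cap\Omega_i}\le \| v_l\|^2_F \lesssim h^{-1}\| v_l\|^2_{T_l}$ by an inverse trace inequality, and since each simplex carries at most $d+1$ faces, summing gives $h\|\langle v_i\rangle\|^2_{\mcE_{h,i}}\lesssim \| v_i\|^2_{\mcT_{h,i}}\lesssim \tn v\tn_h^2$. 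For $i=0$ the only difference is the codimension of the intersection: $|F\cap\Omega_0|\lesssim h^{d-2}$, so $\| v_l\|^2_{F\cap\Omega_0}\lesssim h^{d-2}\| v_l\|^2_{L^\infty(T_l)}\lesssim h^{-2}\| v_l\|^2_{T_l}$, and summing yields $h\|\langle v_0\rangle\|^2_{\mcE_{h,0}}\lesssim h^{-1}\| v_0\|^2_{\mcT_{h,0}}\lesssim \tn v\tn_h^2$, again by Lemma~\ref{lem:poincare-discrete}. Collecting the six bounds completes the proof. There is no genuine obstacle here once Lemma~\ref{lem:poincare-discrete} is available; the only point to keep track of is the bookkeeping of the powers of $h$ coming from the codimensions of the cut pieces $T\cap\Omega_0$ and $F\cap\Omega_0$, together with the fact that the additional terms in $\tn\cdot\tn_{h,\bigstar}$ are plain $L^2$ norms rather than the $A_i$-weighted ones, so no coercivity constant of $A_i$ is involved.
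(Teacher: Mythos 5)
Your proposal is correct and follows essentially the same route as the paper: decompose $\tn v \tn_{h,\bigstar}^2$ into $\tn v \tn_h^2$ plus the extra $L^2$ and face-average terms, reduce each of these by inverse estimates to $h^{-(d-d_i)}\|v_i\|^2_{\mcT_{h,i}}$, and conclude with the discrete Poincar\'e inequality of Lemma~\ref{lem:poincare-discrete}. Your codimension bookkeeping matches the paper's scalings exactly; you merely spell out the elementwise inverse inequalities in more detail than the paper does.
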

\begin{proof} Recalling the definition  of the norms (\ref{eq:dg-norm})--(\ref{eq:dg-norm-star}),
using the inverse estimates
\begin{gather}
\| v_i \|^2_{\mcK_h} \lesssim h^{-(d - d_i)} \| v_i \|^2_{\mcT_{h,i}}
\\
h\| \langle v_i \rangle \|^2_{\mcE_{h,i}} 
\lesssim \sum_{T \in \mcT_{h,i}} h \| v_i \|^2_{\partial T \cap \Omega_i} 
\lesssim \sum_{T \in \mcT_{h,i}}  h^{-(d - d_i) } \| v_i \|^2_{T}  = h^{-(d - d_i) } \| v_i \|^2_{\mcT_{h,i}}
\end{gather}
followed by the Poincar\'e estimate in Lemma \ref{lem:poincare-discrete}, we directly obtain the desired estimate
\begin{align}
\tn v \tn_{h,\bigstar}^2 
&=  \tn v \tn_{h}^2  + \sum_{i=0}^2 \| v_i \|^2_{\mcK_{h,i}} + h\| \langle v_i \rangle \|^2_{\mcE_{h,i}}
\\
&\qquad \lesssim \tn v \tn_{h}^2  + \sum_{i=0}^2 h^{-(d-d_i)}\| v_i \|^2_{\mcT_{h,i}} 
\lesssim  \tn v \tn_{h}^2
\end{align}
\end{proof}

To show coercivity of the form $A_h$ we will start by proving that the forms $a_{h,i} + s_{h,i}$ 
are coercive. Here, as we mentioned above, we will take the variable coefficients $A_i$ into account in order to identify the proper scalings of the Nitsche penalty and stabilization terms.

\begin{lem} \label{lem:macro-var}
There is a constant such that for all $v \in W_{h,i}$ and all macro elements $M\in \mcM_{h,i}$, $i=0,1,2,$
\begin{align}\label{eq:stab-est-var}
h \| \nabla_i v \|_{A_i,\mcE_{h,i}}^2 
\lesssim  
c_{A_i,M}  \| \nabla_i v \|^2_{A_i,M \cap \Omega_i} +  \| v \|^2_{s_{h,i,M}}
\end{align}
where the hidden constant is independent of $A_i$ and $ c_{A_i,M} = \| A_i \|_{L^\infty(M)} / 
\inf_{x\in M} \alpha_i(x)$ where $\alpha_i(x)$ is the coercivity constant of $A_i(x)$, see (\ref{eq:coer-Ai}). 
\end{lem}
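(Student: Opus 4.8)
The plan is, on each macro element $M$ with large element $T_M$, to reduce everything to a single affine function together with Lemma~\ref{lem:macro-control}, treating the bulk domains $i=1,2$ and the surface $i=0$ separately, since for $i=0$ the bound of Lemma~\ref{lem:macro-control} with $m=1$ is too weak in the normal direction. The face norm on the left is read over the uniformly bounded set of faces of $M$, with the trace taken from within $M$; summing over $M$ then recovers all of $\mcE_{h,i}$. I will use freely that $A_i$-weighted $L^2$ norms are comparable to unweighted ones with constants $\|A_i\|_{L^\infty(M)}$ and $(\inf_{x\in M}\alpha_i(x))^{-1}$, that the parameters satisfy $\tau_{i,k}\sim c_{i,k}\|A_i\|_{L^\infty(M)}$, and that $\mathrm{diam}(M)\lesssim h$ while $\Omega_0$ is smooth so that $n_0$ is Lipschitz.

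For $i=1,2$ the argument is short. Estimating $h\|\nabla_i v\|^2_{A_i,\mcE_{h,i}}$ by $\|A_i\|_{L^\infty(M)}$ times $h$ times the unweighted face norm, applying a face-wise inverse trace inequality to pass from each face to the element carrying it, and summing over the bounded number of elements of $M$ gives $h\|\nabla_i v\|^2_{A_i,\mcE_{h,i}}\lesssim\|A_i\|_{L^\infty(M)}\|\nabla v\|^2_M$. Then Lemma~\ref{lem:macro-control} with $m=1$ bounds $\|\nabla v\|^2_M$ by $\|\nabla_i v\|^2_{M\cap\Omega_i}$ plus the jump terms $\sum_{k=0}^1 h^{2k-1}\|[\nabla^k v]\|^2_{\mcF_{h,i}(M)}$; multiplying through by $\|A_i\|_{L^\infty(M)}$, the first term becomes $c_{A_i,M}\|\nabla_i v\|^2_{A_i,M\cap\Omega_i}$ by the pointwise coercivity $\|\xi\|^2\le(\inf_{x\in M}\alpha_i(x))^{-1}(A_i(x)\xi,\xi)$, and the second becomes $\|v\|^2_{s_{h,i,M}}$ by the scaling of $\tau_{i,k}$. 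All remaining constants are inverse-trace and shape-regularity constants, independent of $A_i$.

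For $i=0$ I will not go through $\|\nabla v\|^2_M$, because a single affine $v_M\in P_1(M)$ with $v_M|_{T_M}=v|_{T_M}$ carries a full gradient $g:=\nabla v_M$ whose component normal to $\Omega_0$ is controlled on $M\cap\Omega_0$ only by $\tau_{0,2}h^2\|\nabla_n v\|^2_{M\cap\Omega_0}$, that is with the factor $h^2$ rather than the $h$ a direct use of Lemma~\ref{lem:macro-control} ($m=1$, $i=0$) would force. Instead I write $v=v_M+(v-v_M)$. The $(v-v_M)$ piece is treated as in the bulk case (face-wise inverse trace, then Lemma~\ref{lem:macro-approx} with $m=1$), and the arising $\|A_0\|_{L^\infty(M)}\sum_k h^{2k-1}\|[\nabla^k v]\|^2_{\mcF_{h,0}(M)}$ is absorbed into $\|v\|^2_{s_{h,0,M}}$ using $h^{2k-1}\le h^{2k-2}$ and $\tau_{0,k}\sim c_{0,k}\|A_0\|_{L^\infty(M)}$. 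For the $v_M$ piece I bound, on a cut face $F\cap\Omega_0$ (which is $(d-2)$-dimensional with $|F\cap\Omega_0|\lesssim h^{d-2}$), the integrand $\|\nabla_0 v_M\|^2=\|(I-n_0\otimes n_0)g\|^2$ by its supremum, and the crux is to transfer this supremum to $T_M$ without losing a power of $h$: since $n_0$ is Lipschitz and $\mathrm{diam}(M)\lesssim h$, the field $x\mapsto(I-n_0(x)\otimes n_0(x))g$ oscillates by at most $Ch\|g\|$ over $M\cap\Omega_0$, so comparing its value on $F\cap\Omega_0$ with its mean square over $T_M\cap\Omega_0$ (of measure $\gtrsim h^{d-1}$, by the large-element property) and then squaring a first-order quantity gives $\sup_{F\cap\Omega_0}\|(I-n_0\otimes n_0)g\|^2\lesssim h^{-(d-1)}\|\nabla_0 v\|^2_{T_M\cap\Omega_0}+h^2\|g\|^2$, using $\nabla_0 v_M=\nabla_0 v$ on $T_M$. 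The residual $\|g\|^2$ is handled by the pointwise identity $\|g\|^2=\|(I-n_0\otimes n_0)g\|^2+|\nabla_n v_M|^2$ on $T_M\cap\Omega_0$, which after integration and the large-element bound yields $h^{d-1}\|g\|^2\lesssim\|\nabla_0 v\|^2_{T_M\cap\Omega_0}+\|\nabla_n v\|^2_{T_M\cap\Omega_0}$. Collecting these, multiplying by $h\|A_0\|_{L^\infty(M)}$, summing over the faces of $M$, and using $h\lesssim1$ gives $h\|\nabla_0 v_M\|^2_{A_0,\mcE_{h,0}}\lesssim\|A_0\|_{L^\infty(M)}\Big(\|\nabla_0 v\|^2_{M\cap\Omega_0}+h^2\|\nabla_n v\|^2_{M\cap\Omega_0}\Big)$, whose two terms are $\lesssim c_{A_0,M}\|\nabla_0 v\|^2_{A_0,M\cap\Omega_0}$ by pointwise coercivity and $\lesssim\|v\|^2_{s_{h,0,M}}$ since $\tau_{0,2}\sim c_{0,2}\|A_0\|_{L^\infty(M)}$. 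The main obstacle is precisely this surface estimate — keeping the normal part of $g$ at the correct power of $h$ when passing from the cut faces back to the large element — while the bulk case and the $v-v_M$ contributions are routine.
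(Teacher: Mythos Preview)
Your argument is correct and matches the paper's in spirit, with two tactical variations worth noting. For the bulk domains ($i=1,2$) you bypass the split $v=v_M+(v-v_M)$ and invoke Lemma~\ref{lem:macro-control} (with $m=1$) directly after an inverse trace estimate; the paper instead keeps the split and uses Lemma~\ref{lem:macro-approx} plus an inverse inequality on $T_M$. Your route is slightly shorter. For the surface ($i=0$) the $(v-v_M)$ piece is treated identically, but for the affine part $v_M$ you argue pointwise: using Lipschitz continuity of $n_0$ you compare $\|(I-n_0\otimes n_0)g\|$ on $F\cap\Omega_0$ to its mean square on $T_M\cap\Omega_0$, picking up an $O(h^2\|g\|^2)$ remainder that is then controlled by the decomposition $\|g\|^2=\|\nabla_0 v_M\|^2+|\nabla_n v_M|^2$ on $T_M\cap\Omega_0$. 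The paper encodes the same idea by introducing a constant projection $P_M$ with $\|P_0-P_M\|_{L^\infty(M)}\lesssim h$ and splitting $\nabla_0 v_M=(P_0-P_M)\nabla v_M + P_M\nabla v_M$; the first piece gains the factor $h^2$ and the second is constant on $M$ so can be transferred to $T_M\cap\Omega_0$ without loss. Both routes land on the same bound $\|\nabla_0 v\|^2_{M\cap\Omega_0}+h^2\|\nabla_n v\|^2_{M\cap\Omega_0}$ for the $v_M$ contribution, and hence the same final estimate.
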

\begin{proof} For $M \in \mcM_{h,i}$ let $\mcF_{h,i}(\overline{M}) = \{ F \in \mcF_{h,i} : F \subset \overline{M}\}$ be the faces 
in $M$ including the faces on the boundary of $M$ and let $\mcE_{h,i}(\overline{M}) = \mcF_{h,i}(\overline{M}) \cap \Omega_i$ be 
the intersection of the faces with $\Omega_i$. Then  $\mcE_{h,i} = \cup_{M \in \mcM_{h,i} }  \mcE_{h,i}(\overline{M})$ 
and we have
\begin{align}
h \| \nabla_i v \|_{A_i,\mcE_{h,i}}^2 
&\lesssim \sum_{M\in \mcM_{h,i}} h \|A_i \|_{L^\infty(M)} \| \nabla_i v \|^2_{\mcE_{h,i}(\overline{M})}
\\ \label{eq:macro-var-a}
& \lesssim \sum_{M\in \mcM_{h,i}}  \|A_i \|_{L^\infty(M)} \Big( \underbrace{h \| \nabla_i (v - v_M) \|^2_{\mcE_{h,i}(\overline{M})}}_{=I} 
 + 
\underbrace{h \| \nabla_i  v_M \|^2_{\mcE_{h,i}(\overline{M})}}_{=II}\Big)
\end{align}
where we added and subtracted $v_M \in P_1(M)$, such that $v_M|_{T_M} = v|_{T_M}$ where 
$T_M$ is the element with a large intersection with $\Omega_i$.

\paragraph{Term $\bfI$.} We use an inverse inequality to pass from the $d_i-1$ dimensional intersections 
$E = F \cap \Omega_i$ to the $d$ dimensional element $T$, manufacturing a scaling with $h^{- ( d - ( d_i - 1) )}$,
and then we apply Lemma \ref{lem:macro-approx},
\begin{align}
h  \| \nabla_i (v - v_M) \|^2_{\mcE_{h,i}(\overline{M})} 
&\lesssim 
h h^{- ( d - (d_i - 1)) } \| \nabla (v - v_M) \|^2_M
\\ \label{eq:macro-vara-I}
&\lesssim h^{-(d - d_i)}  \Big( \sum_{k=0}^1 h^{2k- 1} \| [ \nabla^k v  ] \|^2_{\mcF_{h,i}(M)} \Big)
\end{align}

%

\paragraph{Term $\bfI\bfI$.}  For the bulk domains ($i=1,2$) we use an inverse estimate 
to pass from the $d-1$ dimensional intersection $E = F \cap \Omega_i$ to the $d$ dimensional element $T$, then 
again using an inverse estimate we pass to the element with a large intersection $T_M$, then we add and subtract $v$,  use the triangle inequality, and finally Lemma \ref{lem:macro-approx},  
\begin{align}
&h \| \nabla_i  v_M \|^2_{\mcE_{h,i}(\overline{M})} \lesssim  \| \nabla_i  v_M \|^2_M \lesssim \| \nabla_i  v_M \|^2_{T_M}
\\
&\qquad \lesssim \| \nabla_i  (v_M - v) \|^2_{T_M} + \| \nabla_i  v \|^2_{T_M} 
\lesssim \sum_{k=0}^1 h^{2k- 1} \| [ \nabla^k v  ] \|^2_{\mcF_{h,i}(M)}  + \underbrace{\| \nabla_i  v \|^2_{T_M \cap \Omega_i}}_{\lesssim \| \nabla_i  v \|^2_{M \cap \Omega_i}}
\end{align}
We then have
\begin{align}
h \| \nabla_i v \|_{A_i,\mcE_{h,i}}^2 &\lesssim 
\sum_{M\in \mcM_{h,i}} \| A_i \|_{L^\infty(M)} \| \nabla_i v \|^2_{M \cap \Omega_i}  
\\
&\qquad 
+  
\underbrace{\| A_i \|_{L^\infty(M)} \Big( \sum_{k=0}^1 h^{2k - 1} \|[ \nabla^k v ] \|^2_{\mcF_{h,i}(M)}\Big) }_{\sim \| v \|^2_{s_{h,i}}}
\\
&\lesssim 
\sum_{M\in \mcM_{h,i}} \underbrace{\| A_i \|_{L^\infty(M)} 
 \alpha_{i,M}^{-1}}_{c_{A_i,M}} \| \nabla_i v \|^2_{A_i, M\cap \Omega_i}  
 +  \| v \|^2_{s_{h,i,M}}
\end{align}

For the surface domain ($i=0$) we need a more refined argument to find the required scaling on the normal gradient stabilization. To that 
end let $P_i= I - n_i\otimes n_i$ be the tangent projection associated 
with the exact domain $\Omega_i$ and let $P_M = I - n_M \otimes n_M$ be a constant projection 
such that 
\begin{equation}\label{eq:PM}
\| P_i - P_M \|_{L^\infty(M)} \lesssim h
\end{equation}
Since the normal field is smooth and $\text{diam}(M) \sim h$ we can construct $P_M$ using a constant approximation $n_M$ of $n$ on the macro element $M$. Adding and subtracting $P_M$ and using the 
triangle inequality we get 
\begin{align}
h \| \nabla_i v_M \|^2_{\mcE_{h,i}(\overline{M})} 
&= h \| P_i \nabla v_M \|^2_{\mcE_{h,i}(\overline{M})}  
\\
&\lesssim   h \| (P_i - P_M) \nabla v_M \|^2_{\mcE_{h,i}(\overline{M})}  
+ h \| P_M \nabla v_M \|^2_{\mcE_{h,i}(\overline{M})} 
\\ 
&=
II_1 + II_2
\end{align}
For $II_1$ we use (\ref{eq:PM}) and (\ref{eq:approx-est}) to conclude that 
\begin{align}
II_1 &=  h \| (P_i - P_M) \nabla v_M \|^2_{\mcE_{h,i}(\overline{M})}  
\\
&\lesssim   h^3 \|\nabla v_M \|^2_{\mcE_{h,i}(\overline{M})}  
\\
&
\lesssim  h \|\nabla v_M \|^2_{M}  
\\
&
\lesssim  h \|\nabla v_M \|^2_{T_M}  
\\
&\lesssim  h^2 \| \nabla v_M \|^2_{T_M\cap \Omega_i}
\\
&\lesssim h^2 \| \nabla (v_M - v ) \|^2_{M\cap \Omega_i} + h^2 \| \nabla v \|^2_{M\cap \Omega_i} 
\\
&\lesssim  h \| \nabla (v_M - v ) \|^2_{M} + h^2 \| \nabla_i v \|^2_{M\cap \Omega_i} +  h^2 \| \nabla_n v \|^2_{M\cap \Omega_i} 
\\ \label{eq:macro-var-II1}
&
\lesssim h \sum_{k=0}^1 h^{2k - 1} \|[ \nabla^k v ] \|^2_{\mcF_{h,i}(M)}  
+ h^2 \| \nabla_i v \|^2_{M\cap \Omega_i} +  h^2 \| \nabla_n v \|^2_{M\cap \Omega_i} 
\end{align}
Next for $II_2$ we pass from edges to the macro element using an inverse 
inequality, then using the fact that $P_M \nabla v_M$ is constant on $M$ 
we pass from $M$ to $T_M \cap \Omega_i$, then we add and subtract $P_i$ 
and employ (\ref{eq:PM}), 
\begin{align}
II_2 &= h \| P_M \nabla v_M \|^2_{\mcE_{h,i}(\overline{M})} 
\\
&
\lesssim h^{-1} \| P_M \nabla v_M \|^2_M
\\
&
\lesssim \| P_M \nabla v_M \|^2_{T_M \cap \Omega_i}
\\
&
\lesssim \| P_i \nabla v \|^2_{T_M \cap \Omega_i} + \| (P_M - P_i) \nabla v \|^2_{T_M \cap \Omega_i}
\\
&
\lesssim \| \nabla_i v \|^2_{M \cap \Omega_i} + h^2 \| \nabla v \|^2_{M \cap \Omega_i}
\\
&\lesssim 
(1 + h^2) \| \nabla_i v \|^2_{M \cap \Omega_i} + h^2 \| \nabla_{n} v \|^2_{M \cap \Omega_i}
\\ \label{eq:macro-var-II2}
&\lesssim \| \nabla_i v \|^2_{M \cap \Omega_i}  + h^2 \| \nabla_{n} v \|^2_{M \cap \Omega_i}
\end{align}
Together the bounds (\ref{eq:macro-vara-I}), (\ref{eq:macro-var-II1}), and (\ref{eq:macro-var-II2})  
of $I$, $II_1$ and $II_2$ give 
\begin{align}
I + II \lesssim \| \nabla_i v \|^2_{M \cap \Omega_i}  +  
 \sum_{k=0}^1 h^{2k - 2} \|[ \nabla^k v ] \|^2_{\mcF_{h,i}(M)}  
+ h^2 \| \nabla_{n} v \|^2_{M \cap \Omega_i}
\end{align}
which inserted into (\ref{eq:macro-var-a}) give
\begin{align}
h \| \nabla_i v \|_{A_i,\mcE_{h,i}}^2 &\lesssim 
\sum_{M\in \mcM_{h,i}} \| A_i \|_{L^\infty(M)} \| \nabla_i v \|^2_{M \cap \Omega_i}  
\\
&\qquad 
+  
\underbrace{\| A_i \|_{L^\infty(M)} \Big( \sum_{k=0}^1 h^{2k - 2} \|[ \nabla^k v ] \|^2_{\mcF_{h,i}(M)}  
+ h^2 \| \nabla_{n} v \|^2_{M \cap \Omega_i} \Big)}_{\sim \| v \|^2_{s_{h,i}}}
\\
&\lesssim 
\sum_{M\in \mcM_{h,i}} \underbrace{\| A_i \|_{L^\infty(M)} 
 \alpha_{i,M}^{-1}}_{c_{A_i,M}} \| \nabla_i v \|^2_{A_i, M\cap \Omega_i}  
 +  \| v \|^2_{s_{h,i,M}}
\end{align}
and thus the proof is complete.
\end{proof}

\begin{lem} \label{lem:ahsh-coer} If the stabilization parameters $\tau_{a_i}$,  see (\ref{eq:weightnorm}), are sufficiently large, then the forms $a_{h,i} + s_{h,i}$ satisfy the coercivity 
\begin{equation}\label{eq:ahsh-coer}
\tn v \tn^2_{h,i} \lesssim   a_{h,i} (v , v) + s_{h,i}(v,v)  \quad  v\in W_{h,i}, \quad i=0,1,2
\end{equation}
\end{lem}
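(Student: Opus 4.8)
The plan is to establish coercivity of $a_{h,i}+s_{h,i}$ in the norm $\tn\cdot\tn_{h,i}$ by the standard Nitsche argument, with the key twist that the troublesome flux term $h\|\langle\nu_i\cdot A_i\nabla_i v\rangle\|^2_{A_i^{-1},\mcE_{h,i}}$ is absorbed not by an inverse estimate on all faces (which is unavailable since faces can have tiny intersection with $\Omega_i$) but by Lemma \ref{lem:macro-var}. First I would expand $a_{h,i}(v,v)+s_{h,i}(v,v)$: the volume term gives $\|\nabla_i v\|^2_{A_i,\mcK_{h,i}}$, the Nitsche penalty gives $h^{-1}\|\nu_i[v]\|^2_{A_i,\mcE_{h,i}}$ after recalling $\lambda_{a_i}=\tau_{a_i}\nu_i\cdot A_i\nu_i$, the stabilization gives $\|v\|^2_{s_{h,i}}$, and the two consistency-type cross terms $-2([v],\langle\nu_i\cdot A_i\nabla_i v\rangle)_{\mcE_{h,i}}$ must be bounded. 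For those I would apply Cauchy–Schwarz in the $A_i$-weighted inner product on the faces, so that
\begin{align}
2\big|([v],\langle\nu_i\cdot A_i\nabla_i v\rangle)_{\mcE_{h,i}}\big|
\le \epsilon\, h^{-1}\|\nu_i[v]\|^2_{A_i,\mcE_{h,i}}
+ C\epsilon^{-1}\, h\,\|\langle\nabla_i v\rangle\|^2_{A_i,\mcE_{h,i}},
\end{align}
using that $|\langle\nu_i\cdot A_i\nabla_i v\rangle|\le \|\nu_i\|_{A_i}\,\|\langle\nabla_i v\rangle\|_{A_i}$ pointwise on each face (Cauchy–Schwarz for the symmetric positive form $A_i$).

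Next I would control the second term on the right by Lemma \ref{lem:macro-var}: collecting faces into macro elements,
\begin{align}
h\,\|\langle\nabla_i v\rangle\|^2_{A_i,\mcE_{h,i}}
\lesssim h\,\|\nabla_i v\|^2_{A_i,\mcE_{h,i}}
\lesssim \sum_{M\in\mcM_{h,i}} c_{A_i,M}\,\|\nabla_i v\|^2_{A_i,M\cap\Omega_i}
+\|v\|^2_{s_{h,i,M}},
\end{align}
which is bounded by $C(\|\nabla_i v\|^2_{A_i,\mcK_{h,i}}+\|v\|^2_{s_{h,i}})$ with a constant depending on the coefficient contrast $c_{A_i,M}$ but not on $h$. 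Choosing $\epsilon$ small to absorb the penalty contribution and then $\tau_{a_i}$ large enough that the remaining fraction of $h^{-1}\|\nu_i[v]\|^2_{A_i,\mcE_{h,i}}$ survives with a positive coefficient, and absorbing the volume and stabilization pieces into $a_{h,i}(v,v)+s_{h,i}(v,v)$, yields a lower bound of the form $c\big(\|\nabla_i v\|^2_{A_i,\mcK_{h,i}}+h^{-1}\|\nu_i[v]\|^2_{A_i,\mcE_{h,i}}+\|v\|^2_{s_{h,i}}\big)$. To recover the full norm $\tn v\tn^2_{h,i}$ I still need the flux term $h\|\langle\nabla_i v\rangle\|^2_{A_i,\mcE_{h,i}}$ as a lower bound, which I get for free from the same application of Lemma \ref{lem:macro-var} since its right-hand side is already controlled; for $i=0$ the normal-derivative term $\tau_{0,2}h^2\|\nabla_{n_0}v_0\|^2$ sits inside $\|v\|^2_{s_{h,0}}$ and is handled identically, the surface case of Lemma \ref{lem:macro-var} providing exactly the $h^2\|\nabla_n v\|^2$ contribution needed.

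The main obstacle is the correct bookkeeping of the coefficient-dependent constants and the order of the two parameter choices: the Cauchy–Schwarz constant $\epsilon$ must be fixed first (small), and only afterwards is $\tau_{a_i}$ chosen large relative to $\epsilon^{-1}$ and the hidden constant from Lemma \ref{lem:macro-var} — one must check there is no circularity, i.e.\ that the constant multiplying $h^{-1}\|\nu_i[v]\|^2$ coming out of Lemma \ref{lem:macro-var}'s right-hand side does not itself depend on $\tau_{a_i}$. It does not, because Lemma \ref{lem:macro-var} only produces volume-gradient and $s_{h,i,M}$ terms, and $s_{h,i,M}$ has its own independent parameters $\tau_{i,k}$; so the argument closes. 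A minor secondary point is verifying that summing the macro-element bounds over $M\in\mcM_{h,i}$ introduces only a uniformly bounded overlap factor, which holds since each face lies in a bounded number of closed macro elements.
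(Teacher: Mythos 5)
Your proposal is correct and follows essentially the same route as the paper: expand $a_{h,i}(v,v)+s_{h,i}(v,v)$, bound the Nitsche cross term by weighted Cauchy--Schwarz and Young's inequality, absorb the resulting face-flux term $h\|\nabla_i v\|^2_{A_i,\mcE_{h,i}}$ via Lemma \ref{lem:macro-var} (under $\sup_M c_{A_i,M}\lesssim 1$), and then choose $\delta$ small and $\tau_{a_i}$ large, recovering the $h\|\langle\nabla_i v\rangle\|^2_{A_i,\mcE_{h,i}}$ contribution to the norm from the same lemma. Your additional remarks on the non-circularity of the parameter choices and the surface case $i=0$ are consistent with, and slightly more explicit than, the paper's argument.
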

\begin{proof} Using the definition of $a_{h,i}$ we get 
\begin{align}
a_{h,i} (v,v) + s_{h,i}(v,v) &= \|\nabla_i v \|^2_{A_i, \mcK_{h,i}} + \| v \|^2_{s_{h,i}}
\\
&\qquad 
- 2 (\langle \nu_i \cdot A_i \nabla_i v \rangle, [v])_{\mcE_{h,i}} + \tau_{a_i} h^{-1} \|\nu_i [v]\|^2_{A_i,\mcE_{h,i}}
\end{align}
Here we can estimate the third term on the right hand side using  Lemma \ref{lem:macro-var} together with the assumption $\sup_{M\in\mcM_{h,i}}  c_{ A_i, M} \lesssim 1$ to conclude that 
\begin{align}
  2(\langle \nu_i \cdot A_i \nabla_i v \rangle, [v])_{\mcE_{h,i}} & \leq 
  2h^{1/2}  \| \nabla_i v \|_{A_i, \mcE_{h,i}}  h^{-1/2} \|\nu_i [v] \|_{A_i, \mcE_{h,i}}
\\
&\leq
\delta h \| \nabla_i v \|^2_{A_i,\mcE_{h,i}}  + \delta^{-1} h^{-1} \|\nu_i [v]\|^2_{A_i, \mcE_{h,i}}
\\
&\leq 
\delta C \Big( \| \nabla_i v \|^2_{A_i,\mcK_{h,i}} +  \| v \|^2_{s_{h,i}} \Big) 
+ 
\delta^{-1} h^{-1} \|\nu_i [v]\|^2_{A_i, \mcE_{h,i}}
\end{align}
with $\delta>0$. We then have 
\begin{align}
a_{h,i} (v,v)+s_{h,i} (v,v) 
&\geq (1-C\delta) ( \|\nabla_i v \|^2_{A_i, \mcK_{h,i}} + \| v \|^2_{s_{h,i}} ) 
\\
&\qquad 
+ h^{-1}( \tau_{a_i} - \delta^{-1}) \|\nu_i [v]\|^2_{A_i,\mcE_{h,i}} 
\end{align}
where we may take $\delta$ small enough and $\tau_{a_i}$ large enough to obtain 
\begin{align}
a_{h,i} (v,v)+s_{h,i} (v,v) 
&\gtrsim  \|\nabla_i v \|^2_{A_i, \mcK_{h,i}} + \| v \|^2_{s_{h,i}} 
+ h^{-1} \|\nu_i [v]\|^2_{A_i,\mcE_{h,i}} 
\gtrsim \tn v \tn_{h,i}^2 
\end{align}
Here we 
at 
last used Lemma \ref{lem:macro-var} to control $h \| \langle  \nabla_i v_i \rangle \|^2_{A_i, \mcE_{h,i}}$.
\end{proof}

\begin{lem} \label{lem:cont-coer} The form $A_h$ is continuous 
\begin{equation}\label{eq:Ah-cont}
 A_{h} (v , w) \lesssim  \tn v \tn_{h,\bigstar}\tn w \tn_{h,\bigstar} \qquad  v,w\in \oplus_{i=0}^2 (H^1(\mcT_{h,i}) + W_{h,i})
\end{equation}
and if the stabilization parameters $\tau_{a_i}$, $i=0,1,2,$ defined in (\ref{eq:weightnorm}), are sufficiently large, then $A_h$ is coercive
\begin{equation}\label{eq:Ah-coer}
\tn v \tn^2_{h,\bigstar} \lesssim   A_{h} (v , v) \quad  v\in W_{h}
\end{equation}
\end{lem}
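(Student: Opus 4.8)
The plan is to establish the two assertions of Lemma \ref{lem:cont-coer} separately, starting with continuity and then coercivity, since the coercivity argument will reuse the discrete Poincar\'e estimates already proven.

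For continuity, I would expand $A_h(v,w)$ into its constituent pieces: the bulk and surface terms $a_{h,i}(v_i,w_i)$, the skew-symmetric convection terms $b_{h,i}(v_i,w_i)$, the stabilization forms $s_{h,i}(v,w)$, and the interface coupling term $\sum_{i=1}^2 (\kappa_{0,i}^{-1}[\kappa v]_i,[\kappa w]_i)_{\Omega_0}$. Each term is then bounded by Cauchy--Schwarz against the corresponding pieces of the $\tn\cdot\tn_{h,i,\bigstar}$ norm. The $a_{h,i}$ terms: the volume term $(A_i\nabla_i v_i,\nabla_i w_i)_{\mcK_{h,i}}$ pairs directly with the $\|\nabla_i\cdot\|_{A_i,\mcK_{h,i}}$ parts; the consistency terms $(\langle\nu_i\cdot A_i\nabla_i v_i\rangle,[w_i])_{\mcE_{h,i}}$ are split as $h^{1/2}\|\langle\nu_i\cdot A_i\nabla_i v_i\rangle\|_{\mcE_{h,i}}\cdot h^{-1/2}\|[w_i]\|_{\mcE_{h,i}}$, which is exactly the $h\|\langle\nabla_i v_i\rangle\|^2_{A_i,\mcE_{h,i}}$ and $h^{-1}\|\nu_i[w_i]\|^2_{A_i,\mcE_{h,i}}$ parts of the norm; the Nitsche penalty term pairs with $h^{-1}\|\nu_i[\cdot]\|^2_{A_i,\mcE_{h,i}}$. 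For $b_{h,i}$, the volume pieces pair $\|\nabla_i v_i\|_{\mcK_{h,i}}$ with $\|w_i\|_{\mcK_{h,i}}$ (hence the need for the $\bigstar$ norm to include the full $L^2$ volume term), and the face pieces pair $\|\langle v_i\rangle\|_{\mcE_{h,i}}$-type quantities (again present in the $\bigstar$ norm via $h\|\langle v_i\rangle\|^2_{\mcE_{h,i}}$) against $h^{-1}\|[w_i]\|^2_{\mcE_{h,i}}$, using $|\nu_i\cdot\beta|\lesssim 1$. The stabilization forms pair via their own seminorms $\|\cdot\|_{s_{h,i}}$, and the interface term is immediate.

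For coercivity, I would write $A_h(v,v)=\sum_{i=0}^2\tkapi(a_{h,i}(v,v)+b_{h,i}(v,v)+s_{h,i}(v,v))+\sum_{i=1}^2\kappa_{0,i}^{-1}\|[\kappa v]_i\|^2_{\Omega_0}$. The skew-symmetric structure of $b_{h,i}$ makes $b_{h,i}(v,v)=0$: the volume part vanishes by antisymmetry and the assumption $\mathrm{div}_i\beta_i=0$, and the face part $\frac12((\nu_i\cdot\beta)\langle v_i\rangle,[v_i])-\frac12((\nu_i\cdot\beta)[v_i],\langle v_i\rangle)$ cancels identically, while $(\lambda_{b_i}[v_i],[v_i])_{\mcE_{h,i}}\geq 0$. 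Then Lemma \ref{lem:ahsh-coer} gives $a_{h,i}(v,v)+s_{h,i}(v,v)\gtrsim\tn v_i\tn^2_{h,i}$ for each $i$, provided the $\tau_{a_i}$ are large. Summing over $i$ with the positive weights $\tkapi$ (absorbed into the hidden constant) plus the interface term yields $A_h(v,v)\gtrsim\tn v\tn_h^2$. Finally I invoke Lemma \ref{lem:poincare-discrete-simple}, $\tn v\tn_{h,\bigstar}\lesssim\tn v\tn_h$, to upgrade this to $\tn v\tn^2_{h,\bigstar}\lesssim A_h(v,v)$.

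The main obstacle I anticipate is bookkeeping in the continuity estimate rather than any deep difficulty: one must be careful that every term produced by $a_{h,i}$ and $b_{h,i}$ — in particular the volume convection terms and the $\langle v_i\rangle$-weighted face convection terms — is genuinely controlled by a piece of $\tn\cdot\tn_{h,i,\bigstar}$ and not merely by $\tn\cdot\tn_{h,i}$; this is precisely why the augmented norm with the extra $\|v_i\|^2_{\mcK_{h,i}}+h\|\langle v_i\rangle\|^2_{\mcE_{h,i}}$ terms is introduced, and the estimate would fail with the plain energy norm. A secondary subtlety is keeping track of the $A_i$-weighted versus unweighted norms, which is harmless because the $A_i$ are uniformly bounded above and below, but should be acknowledged.
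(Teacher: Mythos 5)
Your proposal is correct and follows essentially the same route as the paper: continuity by term-by-term Cauchy--Schwarz against the $\tn\cdot\tn_{h,\bigstar}$ pieces, and coercivity by observing that the skew-symmetric parts of $b_{h,i}$ cancel so that $b_{h,i}(v,v)=\lambda_{b_i}\|[v]\|^2_{\mcE_{h,i}}\geq 0$, then invoking Lemma \ref{lem:ahsh-coer} for $a_{h,i}+s_{h,i}$ and Lemma \ref{lem:poincare-discrete-simple} to pass from $\tn\cdot\tn_h$ to $\tn\cdot\tn_{h,\bigstar}$. (Only a cosmetic remark: the volume part of $b_{h,i}(v,v)$ vanishes by its skew-symmetric form alone, with no need to invoke $\mathrm{div}_i\,\beta_i=0$.)
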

\begin{proof} The continuity (\ref{eq:Ah-cont}) follows directly from the Cauchy-Schwarz inequality 
and the definition (\ref{eq:dg-norm-star}) of the norm. 
To show the coercivity (\ref{eq:Ah-coer}) we first note that by construction 
$b_{h,i}(v,v)=\lambda_{b_i} \| [v]\|^2_{\mcE_{h,i}}$,
and therefore 
\begin{align}
A_h(v,v) &= \sum_{i=0}^2 \tkapi \Big( \underbrace{a_{h,i}(v_i,v_i) + s_{h,i}(v_i,v_i)}_{\gtrsim\tn v \tn^2_{h,i}} \Big) 
\\
&\qquad + \tkapi\lambda_{b_i} \| [v]\|^2_{\mcE_{h,i}} + \sum_{i=1}^2 \kappa_{0,i}^{-1} \|[\kappa v ]_i\|^2_{\partial \Omega_i \cap \Omega_0} 
\gtrsim \tn v \tn_h^2
\end{align}
where we at last used Lemma \ref{lem:ahsh-coer}. Finally, using Lemma \ref{lem:poincare-discrete-simple} we 
obtain the desired estimate 
\begin{align}
\tn v \tn_{h,\bigstar}^2 \lesssim  \tn v \tn_h^2 \lesssim A_h(v,v)
\end{align}

\end{proof}

\subsection{Interpolation Error Estimates}

To define the interpolant we need extensions of functions in $\Omega_i$ to $\mcT_{h,i}$. For the bulk domains, $\Omega_1$ 
and $\Omega_2$, the Stein extension theorem, see \cite{St70}, provides operators 
\begin{equation}
E_i: H^s(\Omega_i) \rightarrow H^s(\IR^d),\qquad i=1,2, \quad s\geq 0
\end{equation}
such that 
\begin{align}\label{eq:stab-ext}
\| E_i v \|_{H^s(\IR^d)} \lesssim \| v_i \|_{H^s(\Omega_i)}, \qquad   i=1,2, \quad s\geq 0
\end{align}
For the interface, $\Omega_0$, we construct an extension operator by composition with the closest point mapping
\begin{equation}
E_0: H^s(\Omega_0) \ni v \mapsto v \circ p \in H^s(U_{\delta_0}(\Omega_0))
\end{equation}
where we recall that $U_{\delta_0}(\Omega_0)\subset \Omega$ is an open tubular neighborhood of $\Omega_0$ of thickness 
$\delta_0>0$ and $p:U_{\delta_0}(\Omega_0) \rightarrow \Omega_0$, is the closest point mapping. We then have the stability 
\begin{align}\label{eq:ext-stab-0}
\| E_0 v \|_{H^s(U_\delta(\Omega_0) )} \lesssim \delta^{1/2} \| v_i \|_{H^s(\Omega_0)}, \qquad  \quad s\geq 0
\end{align}
Letting $U_{\delta} (\Omega_i ) = \cup_{x \in \Omega_i} B_\delta(x)$, where $B_\delta(x)$ is the open ball of diameter 
$\delta$ with center $x$ we may define the extension operator
\begin{equation}
E:\oplus_{i=0}^2 H^s(\Omega_i) \ni (v_0,v_1,v_2) \mapsto (E_0 v_0, E_1 v_1, E_2 v_2) \in \oplus_{i=0}^2 H^s(U_{\delta_0} (\Omega_i) )
\end{equation}

Let $\pi_{h,i}:L^2(\mcT_{h,i}) \rightarrow W_{h,i}$ be the Cl\'{e}ment 
interpolation operator. For all $v_i \in H^{2}(\mcT_{h,i})$ and $T\in
\mcT_{h,i}$ recall the following standard estimate 
\begin{equation}\label{eq:interpoltets}
\| v_i - \pi_{h,i} v_i \|_{H^m(T)} \lesssim h^{s-m} \| v_i \|_{H^s(\mcN_{h,i}(T))}
\qquad m\leq s \leq 2, \quad m=0,1, 2
\end{equation}
where $\mcN_{h,i}(T)\subset \mcT_{h,i}$ is the union of the elements in
$\mcT_{h,i}$ which share a node with $T$. In particular, we have the stability estimate 
\begin{equation}\label{eq:stabilityinterpol}
\| \pi_{h,i} v_i \|_{H^m(\mcT_{h,i})} \lesssim \| v_i \|_{H^m(\mcT_{h,i})}
\end{equation}
For $h\in (0,h_0]$ with $h_0$ small enough we have $\mcT_{h,i} \subset U_{\delta_0}(\Omega_i)$ and we may 
define interpolation operators by composing the Cl\'ement interpolation operator and the continuous extension operators. 
More precisely 
\begin{equation}
\pi_h:\oplus_{i=0}^2 L^2(\mcT_{h,i}) \ni (v_0,v_1,v_2) \mapsto (\pi_{h,0} E_0 v_0, \pi_{h,1} E_1 v_1, \pi_{h,2} E_2 v_2) \in  W_{h}
\end{equation}
Next we show an interpolation estimate in the dG norm. 
\begin{lem}\label{lem:interpolation} There is a constant such that for all $v \in \oplus_{i=0}^2 H^{2}(\Omega_i)$, 
\begin{equation}\label{eq:interpol-energy}
  \tn v^e- \pi_{h} v^e\tn_{h,\bigstar}^2 \lesssim h^2 \Big( \sum_{i=0}^2\| v_i \|^2_{H^{2}(\Omega_i)} \Big)
\end{equation}
\end{lem}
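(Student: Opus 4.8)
The plan is to bound each term in the norm $\tn \cdot \tn_{h,\bigstar}$ separately, reducing everything to the element-wise Clément estimate \eqref{eq:interpoltets} together with the stability of the Stein and closest-point extensions \eqref{eq:stab-ext}, \eqref{eq:ext-stab-0}. Writing $\eta_i = v_i^e - \pi_{h,i} v_i^e$ for the interpolation error component, I would organize the proof around the four groups of terms in \eqref{eq:dg-norm} and \eqref{eq:dg-norm-star}: the volume gradient term $\|\nabla_i \eta_i\|^2_{A_i,\mcK_{h,i}}$, the two face terms $h\|\langle \nabla_i \eta_i\rangle\|^2_{A_i,\mcE_{h,i}}$ and $h^{-1}\|\nu_i[\eta_i]\|^2_{A_i,\mcE_{h,i}}$, the stabilization seminorm $\|\eta_i\|^2_{s_{h,i}}$, and finally the extra $L^2$-type terms $\|\eta_i\|^2_{\mcK_{h,i}} + h\|\langle \eta_i\rangle\|^2_{\mcE_{h,i}}$ plus the interface coupling $\kappa_{0,i}^{-1}\|[\kappa\eta]_i\|^2_{\Omega_0}$. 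For the volume and bulk $L^2$ terms one simply restricts the domain of integration from $\mcT_{h,i}$ (or $T$) to $\mcK_{h,i} = \mcT_{h,i}\cap\Omega_i \subset \mcT_{h,i}$, applies \eqref{eq:interpoltets} with $(s,m)=(2,1)$ and $(2,0)$ respectively, sums over elements using the finite-overlap property of the patches $\mcN_{h,i}(T)$, and then uses the extension stability to pass from $\|v_i^e\|_{H^2(\mcT_{h,i})}$ to $\|v_i\|_{H^2(\Omega_i)}$; the global bound $\|A_i\|_{L^\infty}\lesssim 1$ handles the weight.

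For the face terms the key tool is the element-wise trace inequality $\|w\|^2_{\partial T} \lesssim h^{-1}\|w\|^2_T + h\|\nabla w\|^2_T$ applied to $w = \eta_i$ and $w = \nabla\eta_i$. Thus $h^{-1}\|\nu_i[\eta_i]\|^2_{A_i,\mcE_{h,i}} \lesssim h^{-1}\|[\eta_i]\|^2_{\mcF_{h,i}} \lesssim h^{-2}\|\eta_i\|^2_{\mcT_{h,i}} + \|\nabla\eta_i\|^2_{\mcT_{h,i}} \lesssim h^2\sum_i\|v_i\|^2_{H^2(\Omega_i)}$ by \eqref{eq:interpoltets} (noting the two powers $h^2$ and $h^0$ in the bracket combine with the $h^{-2}$ and $h^0$ prefactors to give $h^2$ overall). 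Similarly $h\|\langle\nabla_i\eta_i\rangle\|^2_{A_i,\mcE_{h,i}} \lesssim h\|\nabla\eta_i\|^2_{\partial\mcT_{h,i}} \lesssim \|\nabla\eta_i\|^2_{\mcT_{h,i}} + h^2\|\nabla^2\eta_i\|^2_{\mcT_{h,i}}$, and the last term requires the $m=2$ case of \eqref{eq:interpoltets}, which gives $\|\nabla^2\eta_i\|^2_T \lesssim \|\nabla^2 v_i^e\|^2_{\mcN_{h,i}(T)}$ (no positive power of $h$, but no negative power either), again yielding $h^2\sum_i\|v_i\|^2_{H^2}$. The stabilization seminorm $\|\eta_i\|^2_{s_{h,i}}$ consists of terms $h^{2k-1}\|[\nabla^k\eta_i]\|^2_{\mcF^*_{h,i}}$ (bulk) or $h^{2k-2}\|[\nabla^k\eta_i]\|^2_{\mcF^*_{h,0}}$ plus $h^2\|\nabla_{n_0}\eta_0\|^2_{\mcK_{h,0}}$ (surface); these are estimated exactly as the face terms above — trace inequality then \eqref{eq:interpoltets} — and one checks the powers of $h$ balance to $h^2$ in each case. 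For the surface, one additionally uses the extension stability \eqref{eq:ext-stab-0} which carries a $\delta^{1/2}$ factor; since the relevant tubular neighborhood has thickness $\delta \sim h$, this supplies the extra half-power of $h$ that compensates the codimension-one scaling $h^{-(d-d_0)} = h^{-1}$ hidden in the surface norms.

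The interface coupling term $\kappa_{0,i}^{-1}\|[\kappa\eta]_i\|^2_{\Omega_0} = \kappa_{0,i}^{-1}\|\kappa_i\eta_i - \kappa_{0,i}\eta_0\|^2_{\Omega_0}$ is handled by the triangle inequality: bound $\|\eta_i\|_{\Omega_0}$ for $i=1,2$ and $\|\eta_0\|_{\Omega_0}$ separately. For $\eta_0$ one uses that $\Omega_0$ is covered by the cut elements $\mcK_{h,0}$ and applies \eqref{eq:interpoltets} with the surface extension \eqref{eq:ext-stab-0}. For $\eta_i$, $i=1,2$, $\Omega_0 \subset \partial(\mcT_{h,i}\cap\text{something})$ is a codimension-one subset cutting through elements, so one uses an element-wise trace inequality onto $\Omega_0 \cap T$, $\|\eta_i\|^2_{\Omega_0\cap T} \lesssim h^{-1}\|\eta_i\|^2_T + h\|\nabla\eta_i\|^2_T$, summed over the (finitely many in a layer) elements meeting $\Omega_0$, then \eqref{eq:interpoltets} again gives $\lesssim h^2\|v_i\|^2_{H^2}$ after the extension. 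The main obstacle — or rather the point demanding genuine care rather than mechanical computation — is bookkeeping the powers of $h$ in the surface contributions: the surface norm $\tn v_0\tn_{h,0}^2$ is scaled for a codimension-one object and the stabilization term $h^{2k-2}$ is more singular than its bulk counterpart $h^{2k-1}$, so one must verify that the half-power gained from $\delta^{1/2} \sim h^{1/2}$ in \eqref{eq:ext-stab-0}, combined with the interpolation orders, exactly produces the clean $h^2$ on the right-hand side of \eqref{eq:interpol-energy}. A secondary technical point is that $\eta_i$ is only piecewise $H^2$, so trace inequalities must be applied element by element and the finite-overlap of the Clément patches invoked when summing; but this is standard. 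Assembling all the pieces and taking the global bound on $\|A_i\|_{L^\infty}$ yields \eqref{eq:interpol-energy}.
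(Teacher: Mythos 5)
Your proposal is correct and follows essentially the same route as the paper: termwise decomposition of the $\tn\cdot\tn_{h,\bigstar}$ norm, elementwise (cut) trace inequalities to reduce face and interface terms to volume terms, the Cl\'ement estimate \eqref{eq:interpoltets}, and the extension stabilities \eqref{eq:stab-ext} and \eqref{eq:ext-stab-0} with $\delta\sim h$ supplying the extra factor that balances the codimension-one scaling of the surface terms. The only cosmetic slip is calling the gain from $\delta^{1/2}\sim h^{1/2}$ a ``half-power'' where the squared norms actually receive a full factor $\delta\sim h$, exactly as needed; the bookkeeping you describe otherwise matches the paper's.
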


\begin{proof} Using the notation  $\rho_i = v_i - \pi_{h,i} v_i$ we have 
\begin{equation}
\tn \rho \tn_{h,\bigstar}^2 =\sum_{i=0}^2 \tn \rho_i \tn_{h,i,\bigstar}^2 + \sum_{i=1}^2 \kappa_{0,i}^{-1} \| [\kappa \rho ]_i \|^2_{\Omega_0}
\end{equation}
with 
\begin{align} 
\tn \rho_i \tn_{h,i,\bigstar}^2 
&\lesssim  \| \rho_i \|^2_{\mcK_{h,i}} +\| \nabla_i \rho_i \|^2_{\mcK_{h,i}} + h\| \langle \rho_i \rangle \|^2_{\mcE_{h,i}} 
\\
&\qquad 
+ h\| \langle \nabla_i \rho_i \rangle \|^2_{\mcE_{h,i}}+ h^{-1}\|[\rho_i]\|^2_{\mcE_{h,i}} + \| \rho_i \|^2_{s_{h,i}}
\end{align}
see (\ref{eq:dg-norm})--(\ref{eq:dg-norm-star}). To proceed with the estimates we first recall the following elementwise trace inequality that holds for elements $T \in \mcT_{h,i}$ in the bulk domain meshes $i=1,2,$ 
\begin{equation}\label{eq:trace-cut-bulk}
\| w \|^2_{\partial K } \lesssim h^{-1} \| w \|^2_{T}  + h \| \nabla w \|^2_T \qquad v \in H^1(T)
\end{equation}
where for $T \subset \Omega_i$ we have $K = T$ and therefore $\partial K = \partial T$ 
and for $T$ that intersect the interface $\Omega_0$ we have $\partial K = 
(\partial T \cap \Omega_i) \cup (T \cap \Omega_0)$,  see \cite{HaHaLa03} and \cite{HuWuXi17}. 

Using (\ref{eq:trace-cut-bulk}) followed by the interpolation error estimate (\ref{eq:interpoltets}) 
and the stability  (\ref{eq:stab-ext}) of the extension operator we obtain
\begin{align}
\tn \rho_i \tn^2_{h,i,\bigstar} &\lesssim h^{-2} \| \rho_i \|^2_{\mcT_{h,i}} +  \| \nabla \rho_i \|^2_{\mcT_{h,i}} + h^2  \| \nabla^2 \rho_i \|^2_{\mcT_{h,i}} 
\\
 &\qquad \lesssim h^2  \| E_i v_i \|^2_{H^2(\mcT_{h,i})}  \lesssim h^2  \| v_i \|^2_{\Omega_i}
\end{align} 
for $i=1,2.$ Next the interface term is estimated in a similar way
\begin{align}
&\kappa_{0,i}^{-1}\| [\kappa \rho ]_i \|^2_{\Omega_0} 
\lesssim 
\| \rho_0 \|^2_{\Omega_0} 
+ \| \rho_i \|^2_{\Omega_0} 
\lesssim 
\| \rho_0 \|^2_{\Omega_0} + h^{-1} \| \rho_i \|^2_{\mcT_{h,i}(\Omega_0)} 
+  h \| \nabla \rho_i \|^2_{\mcT_{h,i}(\Omega_0)}
\\
&\quad\lesssim 
\| \rho_0 \|^2_{\Omega_0} + h^3 \| E_i v_i \|^2_{H^2(\mcT_{h,i})} 
+  h^3 \| \nabla E_i v_i \|^2_{H^2(\mcT_{h,i})}
\lesssim  
\tn \rho_0 \tn^2_{h,0,\bigstar} + h^3 \| v_i \|^2_{H^2(\Omega_i)} 
\end{align}
where we used the triangle inequality and the trace inequality (\ref{eq:trace-cut-bulk}).

Finally, for the surface domain $\Omega_0$ we proceed in the same way but we instead employ the trace inequality 
\begin{align}
\| w \|^2_{\partial K } \lesssim \sum_{l=0}^2 h^{2 l - 4} \| \nabla^l w \|^2_{T}   \qquad w \in H^2(T)
\end{align}
see \cite{BurHanLarZah16}, and then the interpolation error estimate  (\ref{eq:interpoltets}) and the stability of the extension operator 
(\ref{eq:ext-stab-0}), where we first use the inclusion $ \mcN_h(\mcT_{h,0}) \subset U_\delta(\Omega_0)$ with $\delta \sim h$, 
\begin{align}
\tn \rho_0 \tn^2_{h,0,\bigstar} &\lesssim \sum_{l=0}^2 h^{2 l -3} \| \nabla^l \rho_0 \|^2_{\mcT_{h,0}} 
\lesssim \sum_{l=0}^2 h^{2 l -3}  h^{2(2-l)}  \| E_0 v_0 \|^2_{H^2(\mcT_{h,0})}  
\\
&
 \lesssim h \| E_0 v_0 \|^2_{H^2(U_\delta(\Omega_0))}  
 \lesssim  h \delta \| v_0 \|^2_{H^2(\Omega_0)}
 \lesssim  h^2  \| v_0 \|^2_{H^2(\Omega_0)} 
\end{align} 
which completes the proof.
\end{proof}

\subsection{A Priori Error Estimates}

\begin{thm}
  Let $u\in W\cap \bigoplus_{i=0}^2 H^2(\Omega_i)$ solve the convection diffusion problem \eqref{eq:BVP}-\eqref{eq:BVP-d} and $u_h \in W_h$ be the finite element solution defined by \eqref{eq:weakformuh}. Then, we have
  \begin{equation}
\tn u_h-u^e\tn_{h,\bigstar}  \lesssim  h \Big( \sum_{i=0}^2 \| u \|^2_{H^2(\Omega_i)} \Big)^{1/2}, 
\qquad 
\| u_h-u^e \|  \lesssim  h^2 \Big( \sum_{i=0}^2 \| u \|^2_{H^2(\Omega_i)} \Big)^{1/2}
  \end{equation}
\end{thm}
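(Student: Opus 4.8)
The plan is to follow the standard Strang-type argument for a consistent, coercive, continuous finite element method, combined with a duality (Aubin--Nitsche) argument for the $L^2$ estimate. For the energy estimate, I would first split the error using the interpolant $\pi_h u^e$, writing $u_h - u^e = (u_h - \pi_h u^e) + (\pi_h u^e - u^e)$. The second term is controlled directly by the interpolation estimate in Lemma~\ref{lem:interpolation}. For the first term, set $e_h = u_h - \pi_h u^e \in W_h$; by the coercivity in Lemma~\ref{lem:cont-coer} we have $\tn e_h \tn_{h,\bigstar}^2 \lesssim A_h(e_h, e_h)$. Using Galerkin orthogonality, which follows from subtracting the consistency identity $A_h(u^e, v) = L_h(v)$ of the consistency lemma from the method $A_h(u_h, v) = L_h(v)$, I get $A_h(u_h - u^e, v) = 0$ for all $v \in W_h$, hence $A_h(e_h, e_h) = A_h(u^e - \pi_h u^e, e_h) = -A_h(\pi_h u^e - u^e, e_h)$. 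Then apply the continuity estimate \eqref{eq:Ah-cont} to bound this by $\tn u^e - \pi_h u^e \tn_{h,\bigstar} \tn e_h \tn_{h,\bigstar}$, cancel one factor of $\tn e_h \tn_{h,\bigstar}$, and invoke Lemma~\ref{lem:interpolation} once more. A triangle inequality then yields the $O(h)$ bound on $\tn u_h - u^e \tn_{h,\bigstar}$.

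For the $L^2$ estimate I would use duality. Let $\psi = (\psi_0,\psi_1,\psi_2)$ be chosen appropriately (essentially $\psi_i = (u_h - u^e)$ restricted to $\Omega_i$, up to the $\tkapi$ weights) and let $z \in W$ solve the adjoint problem $A(w, z) = (\text{data involving } \psi, w)$ for all $w \in W$; since $b_i$ is skew-symmetric and $a_i$ symmetric, the adjoint problem is of the same type as the primal one, so elliptic regularity gives $z \in \oplus_i H^2(\Omega_i)$ with $\sum_i \|z_i\|_{H^2(\Omega_i)} \lesssim \|u_h - u^e\|$. Testing the adjoint problem with $w = u_h - u^e$ (this requires care because $u_h - u^e$ need not lie in $W$, so I would work with the broken forms and use the consistency of $A_h$ against the smooth function $z$), one writes $\|u_h - u^e\|^2 \lesssim A_h(u_h - u^e, z^e)$, then inserts $\pi_h z^e$ using Galerkin orthogonality to get $A_h(u_h - u^e, z^e - \pi_h z^e)$, bounds this by continuity $\tn u_h - u^e \tn_{h,\bigstar} \tn z^e - \pi_h z^e \tn_{h,\bigstar}$, and applies the interpolation estimate to the $z$-factor (giving $h \sum_i \|z_i\|_{H^2(\Omega_i)} \lesssim h \|u_h - u^e\|$) together with the already-proven $O(h)$ energy estimate on the $u$-factor. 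Dividing by $\|u_h - u^e\|$ leaves the $O(h^2)$ bound.

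The main obstacle, as usual in the cut/unfitted and DG setting, is making the duality argument rigorous: the error $u_h - u^e$ is not in $W$, so "testing the adjoint problem with the error" must be interpreted via the mesh-dependent bilinear form $A_h$ and the consistency lemma applied to the smooth adjoint solution $z$ (noting the stabilization terms $s_{h,i}(z^e, \cdot)$ vanish on the exact smooth solution, or can be absorbed since $z^e$ is smooth and the jumps $[\nabla^k z^e]$ across interior faces are zero up to extension effects). One also needs the adjoint solution's extension $z^e$ to satisfy the same interpolation bound as in Lemma~\ref{lem:interpolation}, which it does since that lemma only uses $z \in \oplus_i H^2(\Omega_i)$. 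A secondary technical point is correctly matching the $\tkapi$ and $\kappa_{0,i}^{-1}$ weights when identifying the adjoint right-hand side so that the adjoint problem is precisely the transpose of the primal weak problem; this is routine given the explicit symmetric/skew-symmetric structure of $a_i$, $b_i$, and the symmetric interface coupling term. Everything else — coercivity, continuity, consistency, interpolation — is quoted directly from the lemmas established above.
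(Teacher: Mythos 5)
Your proposal is correct and follows essentially the same route as the paper: the energy estimate is obtained exactly as in the paper's proof via the splitting with $\pi_h u$, coercivity, Galerkin orthogonality from the consistency lemma, continuity, and the interpolation estimate of Lemma~\ref{lem:interpolation}. For the $L^2$ bound the paper merely states that it ``follows in the standard way using a duality argument,'' and your sketch of that Aubin--Nitsche argument (including the care needed because $u_h-u^e\notin W$ and the verification that the adjoint solution enjoys the same regularity and interpolation bounds) is a faithful, indeed more detailed, account of what is intended.
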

\begin{proof} 
Adding and subtracting the interpolant $\pi_h u$ and using the triangle inequality we get 
\begin{align}
\tn u - u_h \tn_{h,\bigstar} \leq \tn u - \pi_h u \tn_{h,\bigstar} + \tn \pi_h u - u_h \tn_{h,\bigstar} 
\end{align}
To estimate the second term we employ coercivity (\ref{eq:Ah-coer}), the linearity of $A_h$, the 
definition (\ref{eq:weakformuh}) of the dG method, the consistency (\ref{eq:consistency}), and finally 
the continuity (\ref{eq:Ah-cont}), as follows
\begin{align}
  \tn \pi_h u -u_h\tn_{h,\bigstar}^2 
  &\lesssim A_h(\pi_h u -u_h, \pi_h u -u_h)
\\  
  &=A_h(\pi_h u - u ,\pi_h u -u_h)+A_h(u-u_h,\pi_h u -u_h)
  \\
   &=A_h(\pi_h u - u ,\pi_h u -u_h)+\underbrace{A_h(u,\pi_h u -u_h) - L_h (\pi_h u -u_h)}_{=0} 
   \\
   &\lesssim \tn \pi_h u - u \tn_{h,\bigstar} \tn \pi_h u -u_h \tn_{h,\bigstar}
\end{align}
and thus 
\begin{equation}
  \tn \pi_h u -u_h\tn_{h,\bigstar} \lesssim \tn \pi_h u - u \tn_{h,\bigstar}
\end{equation}
We complete the proof by the interpolation result in Lemma~\ref{lem:interpolation}.  The proof of the $L^2$ estimate follows in the standard way using a duality argument.
\end{proof}

\section{Numerical Example}
We consider a similiar example as in~\cite{GrOlRe15}. The computational domain $\Omega$ is $[-1.5,1.5]\times [-1.5,1.5]$, the interface $\Omega_0$ is the unit circle, $\beta=(y,-x)$, $A_0=1$, $A_1=1$, $A_2=0.5$, $\kappa_1=2$, $\kappa_2=0.5$, $\kappa_{0,1}=1$, $\kappa_{0,2}=2$ and the source terms $f_i$, $i=0,1,2$ and the boundary data are taken such that the exact solution is
\begin{align}
  u_0(x,y)&=3x^2y-y^3 \\
  u_1(x,y)&=e^{1-x^2-y^2}u_0(x,y) \\
  u_2(x,y)&=2u_1(x,y)
  \end{align}
Note that $d_1=d_2=2$ and $d_0=1$. 

We approximate the interface $\Omega_0$ using a cubic spline parametrization, see~\cite{Zah17}. The proposed discontinuous CutFEM in Section~\ref{sec:cutfem} is used with stabilization parameters $\tau_{i,0}=A_i$, $\tau_{i,1}=0.1A_i$ for $i=1,2,$ (in equation~\eqref{eq:stab-bulk}) and $\tau_{0,0}=\tau_{0,1}=A_0$, $\tau_{0,2}=0.1A_0$ (in equation~\eqref{eq:stab-surface}). The resulting linear systems are solved by a direct solver. 

\begin{figure}\centering 
\includegraphics[width=0.38\textwidth]{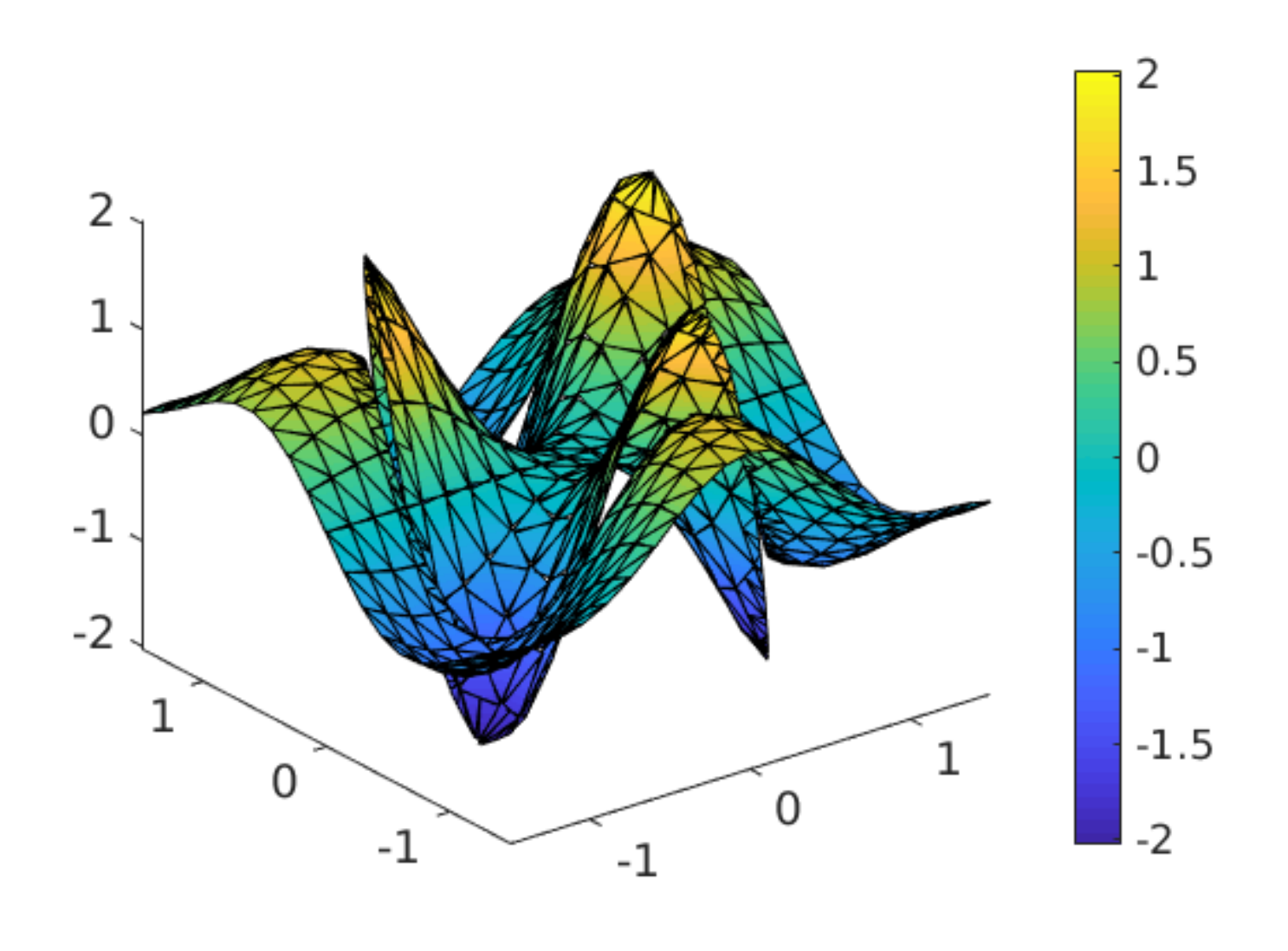} \hspace{0.5cm} 
\includegraphics[width=0.37\textwidth]{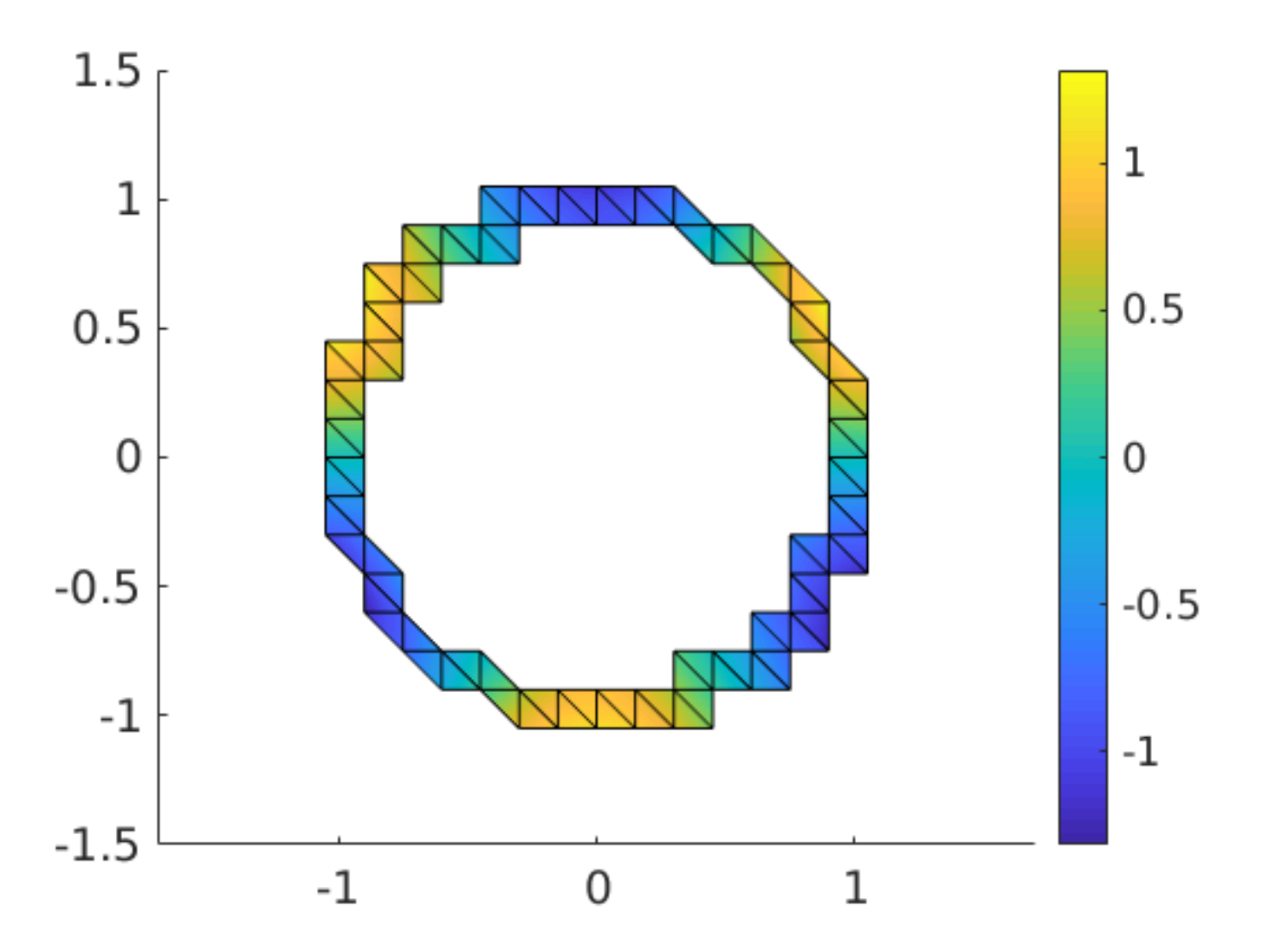} \\\vspace{0.5cm}
\caption{ Left panel: The numerical solution $(u_1,u_2)$ on the uniform background mesh with mesh size $h=0.15$. Right panel: The numerical solution $u_0$ on $\mcT_{h,0}$ with $h=0.15$. \label{fig:sol}}
\end{figure} 
The numerical solution on a uniform background mesh with mesh size $h=0.15$ is shown in Figure~\ref{fig:sol}. We used $\gamma_0=0.25$ and $\gamma_1=\gamma_2=0.125$ (see equation \eqref{eq:largeel}) in Algorithm 1. This resulted in a macro element partition with 20 edges in $\mcF^*_{h,0}$, 32 edges in $\mcF^*_{h,1}$, and 24 edges in $\mcF^*_{h,2}$. In case of full stabilization we would for this mesh instead apply stabilization on 90 edges when $i=0$, 138 edges when $i=1$, and 132 edges when $i=2$.  
We illustrate the difference between full stabilization and the macro element stabilization on the active mesh $\mcT_{h,1}$ for a courser mesh, $h=0.3$, in Figure~\ref{fig:illustfullvsmacrostab}.  We note that in the middle panel when $\gamma_1=0.5$ each cut element is marked as a small element in Algoritm 1 and always connected to an element that is entirely inside $\Omega_1$. However, also in this case when we apply stabilization following Algorithm 1 there are fewer edges (46 edges) on which stabilization is applied compared to using full stabilization (76 edges).   
\begin{figure}\centering
\includegraphics[width=0.3\textwidth]{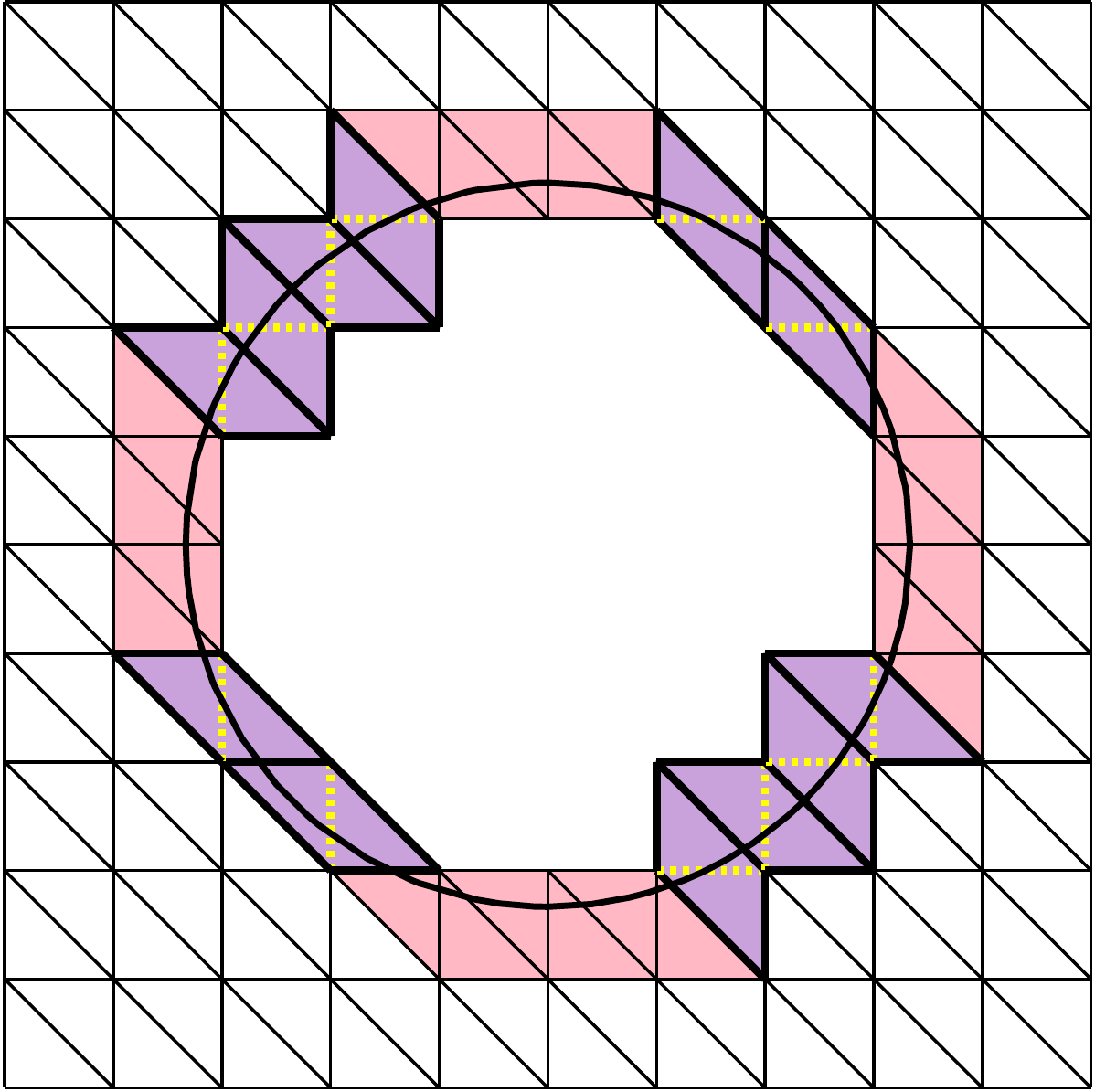} 
\includegraphics[width=0.3\textwidth]{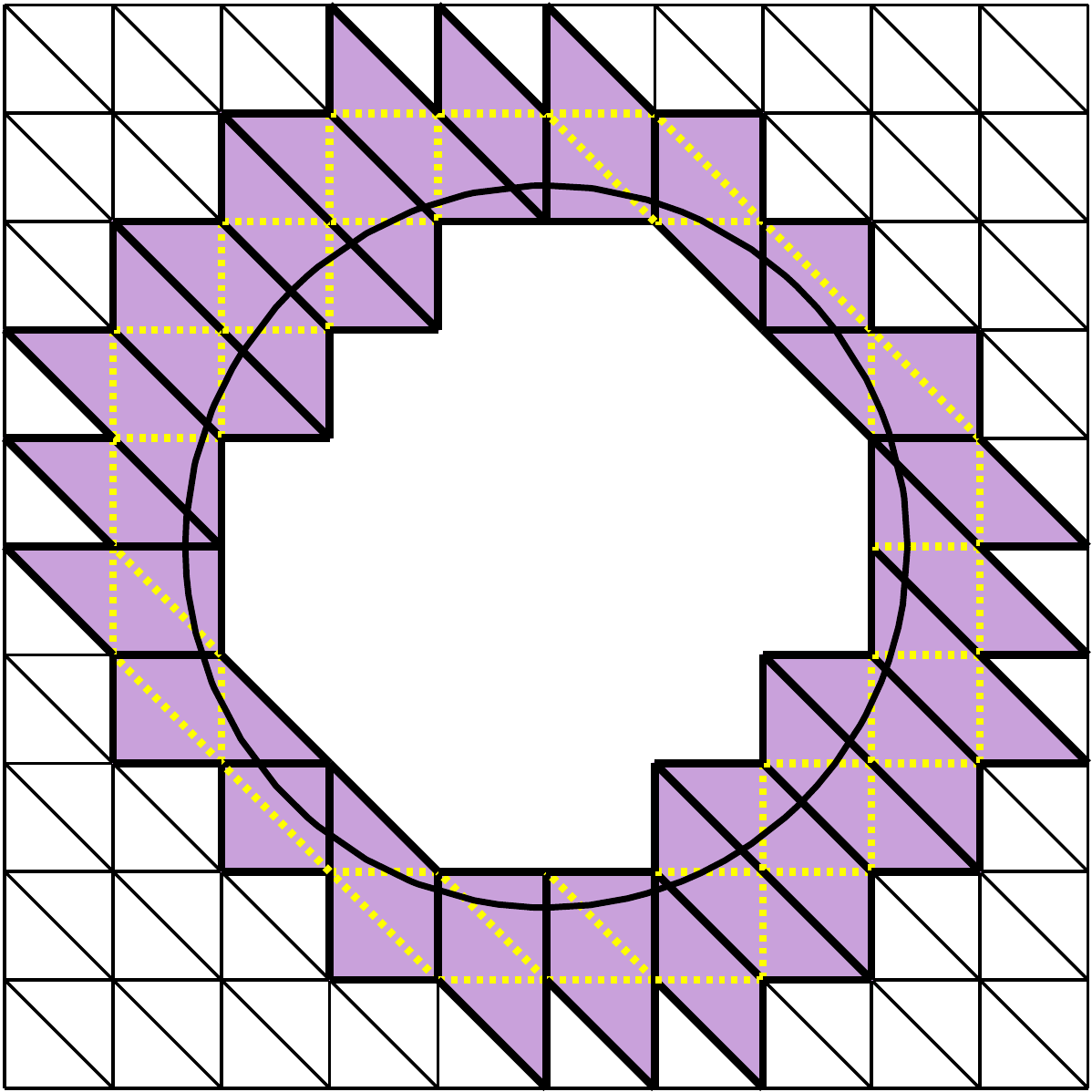} 
\includegraphics[width=0.3\textwidth]{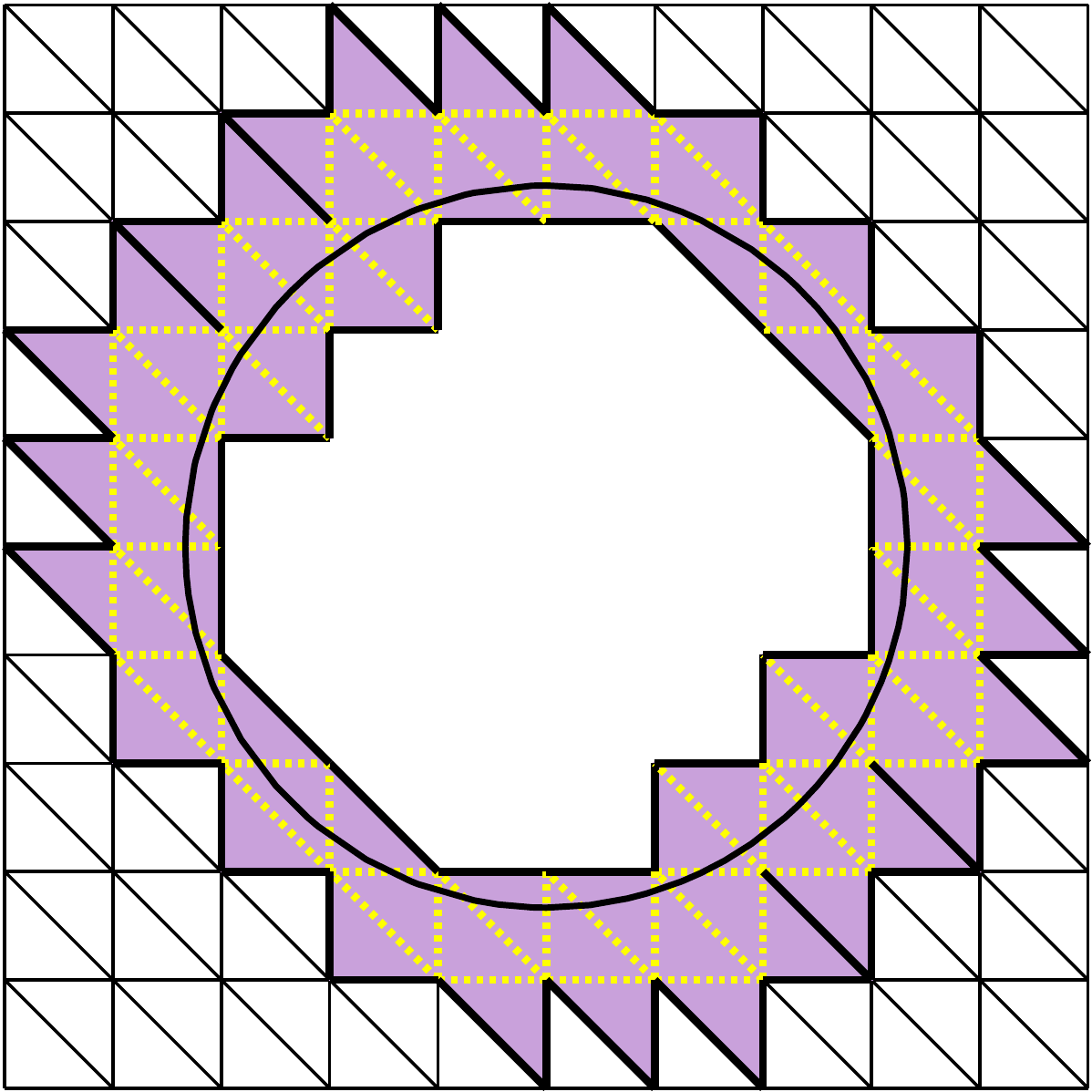} 
\caption{Macro elements in the active mesh $\mcT_{h,1}$ are shown in purple and edges on which stabilization is applied are shown with dotted lines for $\gamma_1=0.125$ (left panel), $\gamma_1=0.5$ (middle panel), and for full stabilization (right panel).  The mesh size is $h=0.3$. Stabilization is applied in total on 12 edges ($\gamma_1=0.125$), 46 edges ($\gamma_1=0.5$), and 72 edges (full stabilization).  \label{fig:illustfullvsmacrostab} }
\end{figure} 

For $\gamma_0=0.25$ and $\gamma_1=\gamma_2=0.125$ we show $L^2$- and $H^1$-errors in the bulk and the interface concentration for different mesh sizes in Figure~\ref{fig:erandcond_coupledP}. We obtain as expected first order convergence in the $H^1$-norm and second order convergence in the $L^2$-norm. We also show the spectral condition number of the scaled stiffness matrix associated with the form 
\begin{equation}\label{eq:Atilde}
\tilde{A}_h(v,w)=A_h((h^{1/2}v_0,v_1,v_2), (h^{1/2}w_0,w_1,w_2)).
\end{equation}
and we observe the expected $\mathcal{O}(h^{-2})$ behaviour. We refer to \cite{BurHanLarZah16} for an estimate of the condition number for a continuous CutFEM approximation of a coupled bulk-surface problem, with the same $h$-scaling as in $\tilde{A}_h$, 
that may be extended to the current discontinuous Galerkin method. In fact, combining the discrete Poincar\'e inequality in Lemma \ref{lem:poincare-discrete} and  Lemma \ref{lem:macro-control} we obtain a bound corresponding to 
Lemma 5.1 in \cite{BurHanLarZah16} and then the estimate of the condition number follows using the coercivity and continuity 
of the form $\tilde{A}_h$ as in Theorem 5.1 in \cite{BurHanLarZah16}. 

Note that for different mesh sizes the interface is positioned differently relative the background mesh but neither the errors or the condition numbers shown in Figure \ref{fig:erandcond_coupledP} are affected by how this relative position changes even though much less stabilization is applied than full stabilization.  
However, in Figure~\ref{fig:erandcond_coupledP_Dtol} where we compare results with different constants $\gamma_i$ we see that when $\gamma_i$ is too small both the error in the approximation of the interface concentration and the condition number of the resulting linear system are very sensitive to the position of the interface relative the background mesh. Errors both in $L^2$-norm and $H^1$-norm, and the condition number can become very large when enough stabilization is not applied. This is expected since coercivity of the form $A_h$ as well as the discrete Poincar\'{e} inequality do not hold in that case.  
The results in Figure~\ref{fig:erandcond_coupledP_Dtol} depend on how large the penalty parameters are chosen but here we have chosen the same penalty parameter independent of the parameters $\gamma_i$ in the stabilization terms $s_{h,i}$. Full stabilization give errors that are independent of the interface position relative the background mesh but we see that it also increases the magnitude of the errors.  The macro element stabilization with $\gamma_0=0.25$ and $\gamma_1=\gamma_2=0.125$ results in linear systems with the same condition number as using full stabilization but solutions with smaller errors. We observe similar behaviour for $H^1$-errors and therefore we only show the errors in $L^2$-norm in Figure~\ref{fig:erandcond_coupledP_Dtol}.

\begin{figure}\centering
\includegraphics[width=0.35\textwidth]{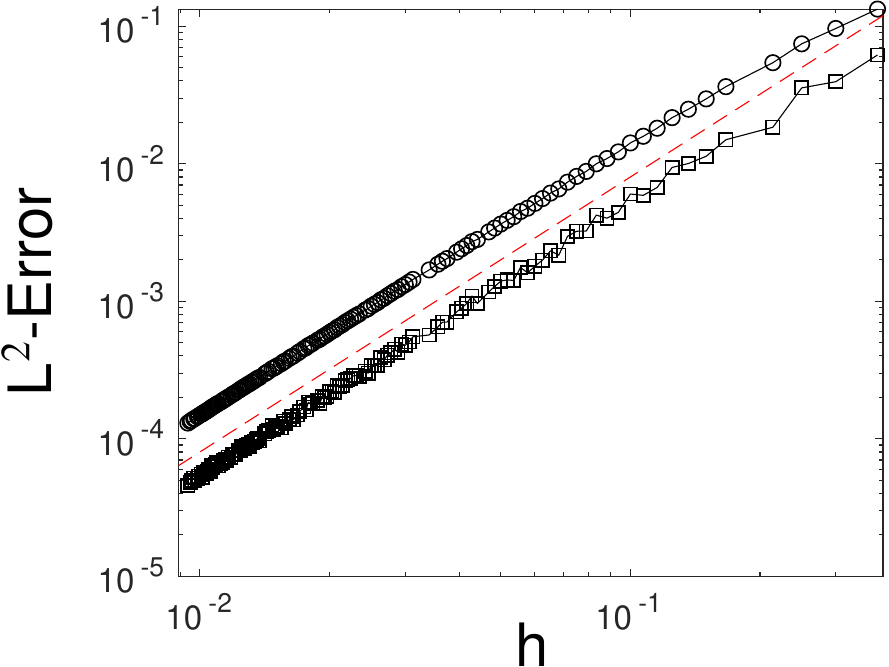} \hspace{0.5cm} 
\includegraphics[width=0.35\textwidth]{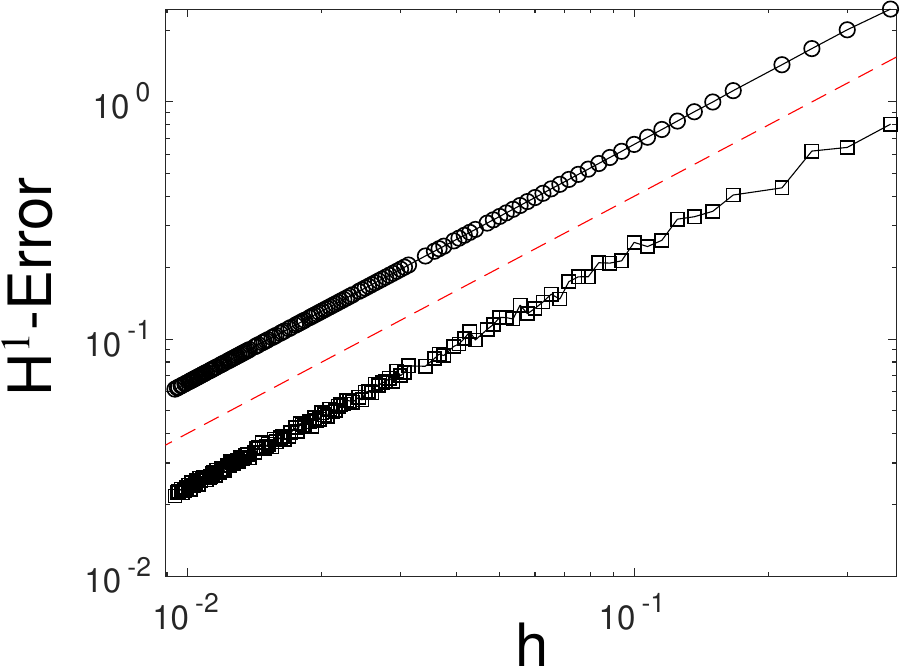} \\\vspace{0.5cm}
\includegraphics[width=0.35\textwidth]{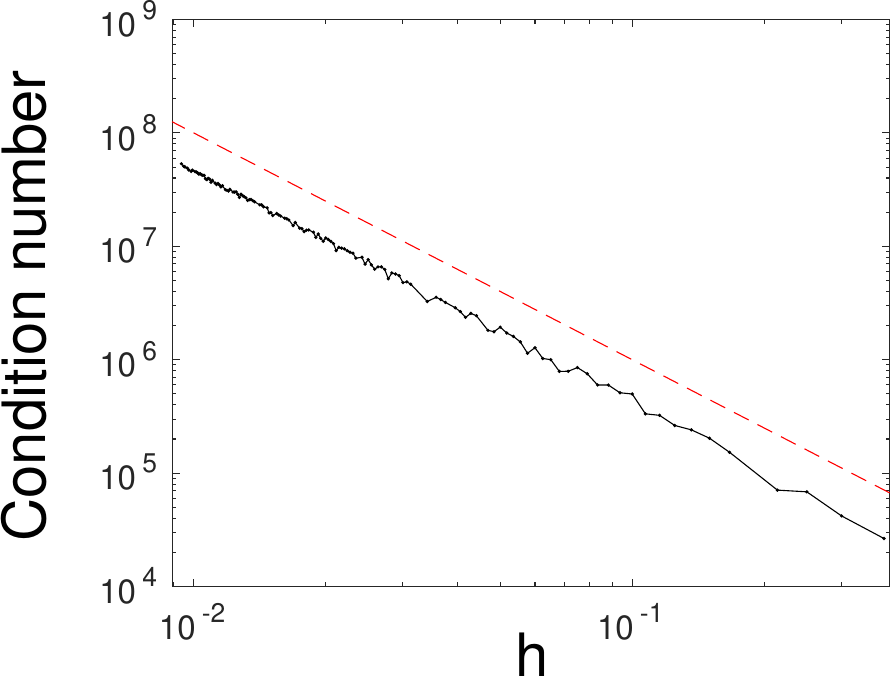} 
\caption{ The error and condition number versus mesh size $h$. Macro element stabilization with $\gamma_0=0.25$ and $\gamma_1=\gamma_2=0.125$. Circles represent the error in the bulk and squares represent the error on the interface. Top left panel: The error measured in the L$^2$-norm versus mesh size $h$.  The dashed line is $0.8h^2$.  Top right panel: The error measured in the $H^1$-norm versus mesh size $h$.  The dashed line is $4h$. Bottom: The spectral condition number of the scaled matrix associated with the form \eqref{eq:Atilde} versus mesh size $h$. The dashed line is $10^{4}h^{-2}$.\label{fig:erandcond_coupledP} }
\end{figure}

\begin{figure}\centering
\includegraphics[width=0.35\textwidth]{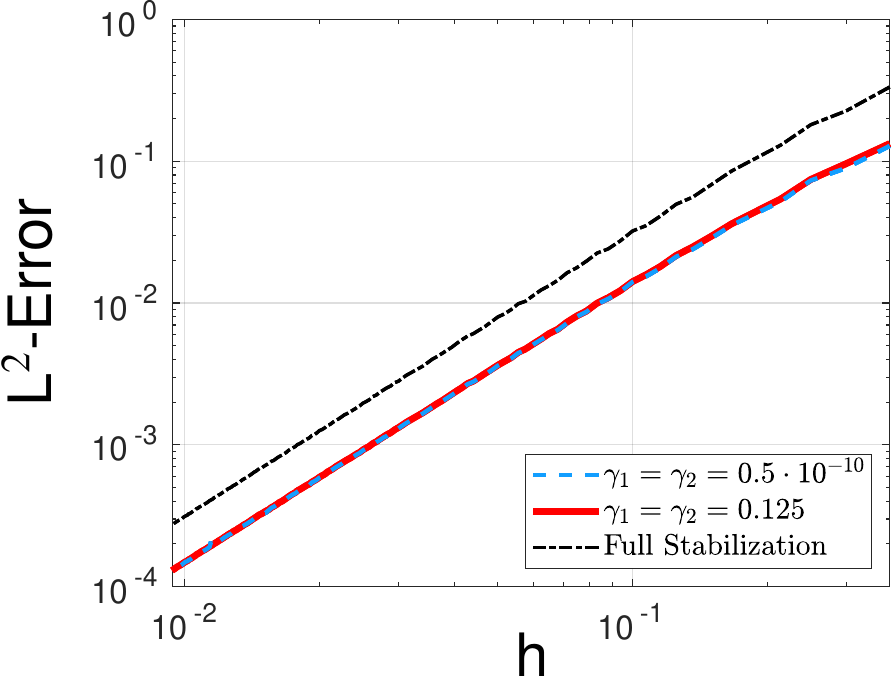} \hspace{0.5cm} 
\includegraphics[width=0.35\textwidth]{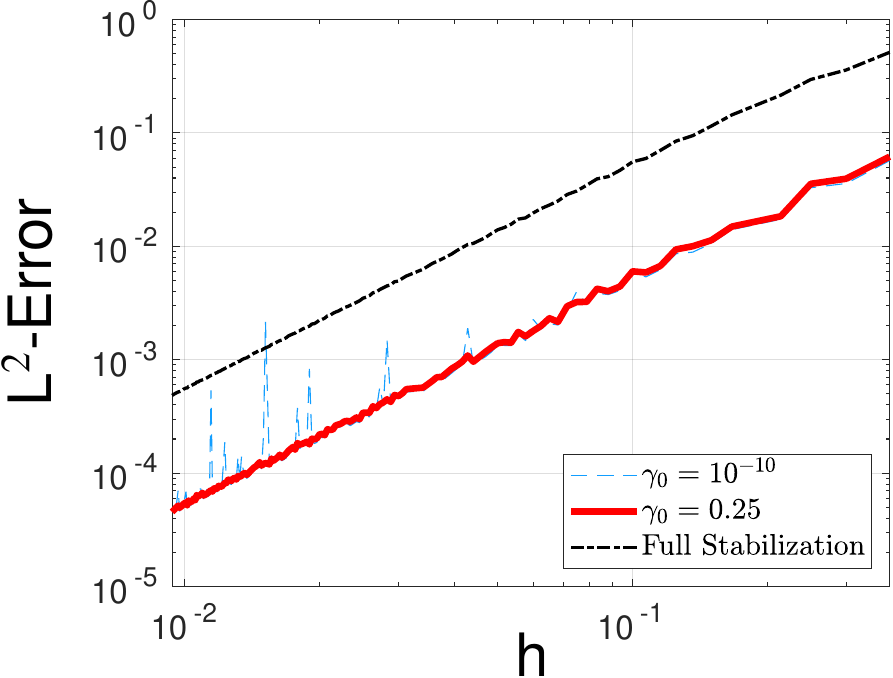} \\\vspace{0.5cm}
\includegraphics[width=0.35\textwidth]{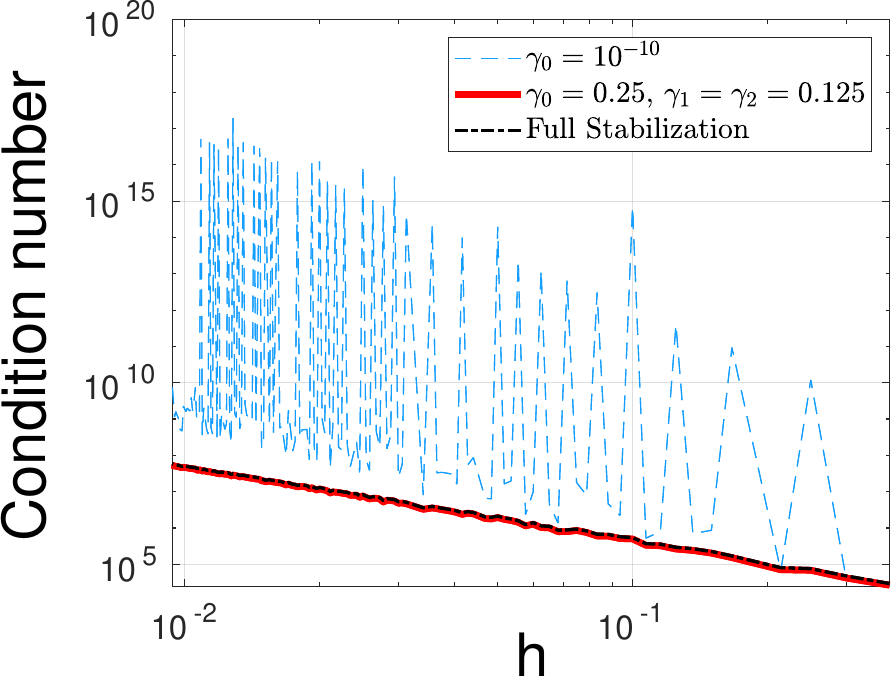} 
\caption{Errors measured in the $L^2$-norm and condition number versus mesh size $h$ for different constants $\gamma_i$ in the macro element stabilization (see equation \eqref{eq:largeel}). Top left panel: The error in the approximation of the bulk concentration. Top right panel: The error in the approximation of the interface concentration.  Bottom: The spectral condition number of the scaled matrix associated with the form \eqref{eq:Atilde} versus mesh size $h$.   \label{fig:erandcond_coupledP_Dtol}}
\end{figure} 

\section{Conclusions}
 
We have developed a stabilization method for discontinuous CutFEM approximations of coupled bulk-interface problems, which is 
localized to macro elements. This macro element stabilization approach leads to convenient proofs of basic stability results and also preserves the local conservation properties of the discontinuous Galerkin formulation on macro elements. Furthermore, the macro stabilization may be applied to continuous Galerkin methods as well as mass matrices and produces a diagonal block matrix with less coupling compared to standard full stabilization. We consider variable coefficients and diffusion as well as convection. The method enjoys optimal convergence and conditioning properties. Further developments include extension to higher order elements, time dependent 
domains, and improved implementations including efficient algorithms for computation of nearly optimal macro element partitions on surface domains.

\bigskip
\paragraph{Acknowledgement.} This research was supported in part by the Swedish Foundation for Strategic Research Grant No.\ AM13-0029, the Swedish Research Council Grants No. 2017-03911, 2018-05262, the Swedish strategic research programme eSSENCE, and the Wallenberg Academy Fellowship KAW 2019.0190.

\bigskip
\bigskip
\noindent
\footnotesize {\bf Authors' addresses:}

\smallskip
\noindent
Mats G. Larson,  \quad \hfill \addressumushort\\
{\tt mats.larson@umu.se}

\smallskip
\noindent
Sara Zahedi, \quad \hfill Mathematics, KTH, Sweden\\
{\tt sara.zahedi@math.kth.se}

\bibliographystyle{abbrv}
\footnotesize{
\bibliography{ref}
}
\end{document}